\documentclass[11pt,reqno]{amsart}

\usepackage[a4paper,margin=31mm]{geometry}
\usepackage{amsmath,amssymb,amsfonts,amsthm,mathtools,mathrsfs}
\usepackage{microtype}
\usepackage{enumitem}
\usepackage{xurl}
\usepackage{hyperref}
\usepackage[nameinlink,noabbrev,capitalize]{cleveref}
\usepackage[T1]{fontenc}
\usepackage{comment}
\hypersetup{
  colorlinks=true,
  linkcolor=black,
  citecolor=black,
  urlcolor=black,
  pdftitle={Global persistence of nearly radial concentrated vortices},
  pdfauthor={Daomin Cao and Guodong Wang}
}

\numberwithin{equation}{section}
\theoremstyle{plain}
\newtheorem{theorem}{Theorem}[section]
\newtheorem{proposition}[theorem]{Proposition}
\newtheorem{lemma}[theorem]{Lemma}
\newtheorem{corollary}[theorem]{Corollary}
\theoremstyle{remark}
\newtheorem{remark}[theorem]{Remark}
\crefname{theorem}{Theorem}{Theorems}
\crefname{proposition}{Proposition}{Propositions}
\crefname{lemma}{Lemma}{Lemmas}
\crefname{corollary}{Corollary}{Corollaries}
\crefname{remark}{Remark}{Remarks}

\newcommand{\A}{\mathcal A}
\newcommand{\D}{\mathcal D}
\newcommand{\Pcal}{\mathcal P}
\newcommand{\Rcal}{\mathcal R}
\newcommand{\Ecal}{\mathcal E}
\newcommand{\Cset}{\mathsf C}
\newcommand{\bR}{\mathbb R}

\title[Global persistence of nearly radial concentrated vortices]
{Global Persistence of Nearly Radial Concentrated Vortices in a Bounded Domain}

\author{Daomin Cao}
\address{State Key Laboratory of Mathematical Sciences, Academy of Mathematics and Systems Science, Chinese Academy of Sciences, Beijing 100190, P.R. China}
\email{dmcao@amt.ac.cn}

\author{Guodong Wang}
\address{School of Mathematical Sciences, Dalian University of Technology,
Dalian 116024, P.R. China}
\email{gdw@dlut.edu.cn}

\subjclass[2020]{Primary 76B47; Secondary 35Q31, 35B35}
\keywords{2D Euler equation, concentrated vorticity,
symmetric decreasing rearrangement, global persistence,
Kirchhoff--Routh function}

\begin{document}

\begin{abstract}
In this paper, we study the evolution of nearly radial concentrated vortices
for the incompressible Euler equation in a bounded planar domain. We prove
that if a single vortex is initially concentrated near a strict local minimum
point of the Robin function of the domain and is close, up to translation, to
its symmetric decreasing rearrangement, then both its shape and location
remain uniformly controlled for all time. We also establish an analogous
result for a pair of opposite-sign vortices near a strict local minimum point
of the corresponding Kirchhoff--Routh function. No symmetry is imposed on
the domain or the initial data, and the initial data need not be close to any
steady state. To prove these results, we develop a new Lyapunov mechanism for the
evolution of vorticity governed by the Euler equation starting from such initial data,
providing quantitative control of both vortex shape and location. Specifically,
we combine kinetic-energy conservation and vorticity equimeasurability with several
fixed-time estimates to obtain a conditional estimate, and
then use a set-valued first-exit argument to propagate it
globally in time.
\end{abstract}

\maketitle
\setcounter{tocdepth}{1}
\tableofcontents
\section{Introduction and main results}

\subsection{2D Euler dynamics and point vortices}
\label{subsec:euler-point-vortices}
Let $D\subset\bR^2$ be a bounded simply connected domain with a smooth
boundary $\partial D$.  Consider an incompressible inviscid fluid in $D$
subject to the impermeability condition on $\partial D$.  In vorticity
form, the governing Euler equation reads
\begin{equation}\label{eq:euler-intro}
  \begin{cases}
    \partial_t\omega+\mathbf u\cdot\nabla\omega=0,
    &\mathbf x=(x_1,x_2)\in D,\quad t\in\bR,\\
    \mathbf u=\nabla^\perp(-\Delta)^{-1}\omega,\\
    \omega|_{t=0}=\omega_0,
  \end{cases}
\end{equation}
where $\mathbf u$ is the velocity field, $\omega$ is the scalar vorticity,
$\nabla^\perp=(\partial_{x_2},-\partial_{x_1})$, and
$(-\Delta)^{-1}$ denotes the inverse of $-\Delta$ subject to the
zero Dirichlet boundary condition.
For $\omega_0\in L^\infty(D)$, Yudovich's theory \cite{Yudovich1963} gives a unique global weak
solution $\omega\in L^\infty(\bR;L^\infty(D))$ and an associated
measure-preserving flow map $X$ satisfying
\begin{equation*}
  \omega(t,X(t,\mathbf x))=\omega_0(\mathbf x)
  \quad\text{for every }t\in\bR
  \text{ and for a.e. }\mathbf x\in D.
\end{equation*}
See also the detailed accounts in
\cite[Sections~2.2--2.3]{MarchioroPulvirenti1994} and
\cite[Chapter~8]{MajdaBertozzi2002}.

Write $\omega_t:=\omega(t,\cdot)$ and
$\mathbf u_t:=\mathbf u(t,\cdot)$.  The following two conservation laws for
Yudovich solutions of \eqref{eq:euler-intro} will be used throughout this
paper.  The first is \emph{conservation of kinetic energy}:
\begin{equation}\label{def_knc}
  \Ecal(\omega_t)
  :=\frac12\int_D|\mathbf u_t|^2\,\mathrm d\mathbf x
  =\frac12\int_D
  \omega_t(-\Delta)^{-1}\omega_t\,\mathrm d\mathbf x
  =\Ecal(\omega_0).
\end{equation}
The second is \emph{equimeasurability of vorticity}:
\[
  |\{\mathbf x\in D:\omega_t(\mathbf x)>s\}|
  =
  |\{\mathbf x\in D:\omega_0(\mathbf x)>s\}|,
  \qquad (s\in\bR).
\]
In particular, the circulation and every $L^p$ norm,
$1\leq p\leq\infty$, are conserved.  More generally, for any
Borel function $\Psi$ such that $\Psi(\omega_0)\in L^1(D)$,
\begin{equation*}
  \int_D\Psi(\omega_t)\,\mathrm d\mathbf x
  =\int_D\Psi(\omega_0)\,\mathrm d\mathbf x.
\end{equation*}
Detailed proofs of these two conservation laws can be found in
\cite[Section~3]{Burton2005}.

For $N\geq1$, define the configuration space
\begin{equation*}
  \Lambda_N(D)
  :=\{(\mathbf a_1,\ldots,\mathbf a_N)\in D^N:
  \mathbf a_i\neq\mathbf a_j\ \text{for }i\neq j\}.
\end{equation*}
Let G denote the Green function of $-\Delta$ in D with zero Dirichlet boundary condition.  Write
\begin{equation}\label{eq:intro-green}
  G(\mathbf x,\mathbf y)
  =\frac1{2\pi}\ln\frac1{|\mathbf x-\mathbf y|}
  -h(\mathbf x,\mathbf y),\qquad
  (\mathbf x,\mathbf y)\in\Lambda_2(D),
\end{equation}
where $h$ is called the regular part of $G$. Note that $h$ can be
extended to a smooth function on $D\times D$.
In terms of the Green function, the kinetic energy in \eqref{def_knc} has the
following representation:
\begin{equation}\label{def_kngf}
  \Ecal(\omega_t)=\frac12\iint_{D\times D}
  G(\mathbf x,\mathbf y)\omega_t(\mathbf x)\omega_t(\mathbf y)
  \,\mathrm d\mathbf x\,\mathrm d\mathbf y.
\end{equation}
The \emph{Robin function} of $D$ is defined by
\[
  H(\mathbf x):=\frac12 h(\mathbf x,\mathbf x).
\]
Given nonzero circulations
$\boldsymbol\kappa=(\kappa_1,\ldots,\kappa_N)$,
the corresponding \emph{Kirchhoff--Routh function}
$W_{\boldsymbol\kappa}^{(N)}:\Lambda_N(D)\to\bR$
is defined by
\begin{equation}\label{eq:intro-KR}
  W_{\boldsymbol\kappa}^{(N)}
  (\mathbf a_1,\ldots,\mathbf a_N)
  :=\sum_{i=1}^N\kappa_i^2H(\mathbf a_i)
  -\sum_{1\leq i<j\leq N}
  \kappa_i\kappa_j\,G(\mathbf a_i,\mathbf a_j).
\end{equation}
Thus, $W_{\boldsymbol\kappa}^{(N)}$ is a circulation-weighted combination
of the Robin function $H$ and the Green function $G$.

We next introduce the \emph{point-vortex model}, which formally arises as the
singular limit of concentrated Euler vorticity.  Replacing a distributed
vorticity field by
$\sum_{i=1}^N\kappa_i\delta_{\mathbf a_i(t)}$ and removing the singular
self-interaction of each vortex formally yields the following Hamiltonian
system for the point-vortex positions
$\mathbf a_i(t)$:
\begin{equation*}
  \kappa_i\frac{\mathrm d{\mathbf a}_i}{\mathrm dt}
  =-\nabla_{\mathbf a_i}^{\perp}
  W_{\boldsymbol\kappa}^{(N)}
  (\mathbf a_1,\ldots,\mathbf a_N),
  \qquad i=1,\ldots,N.
\end{equation*}
See \cite[Chapter~4]{MarchioroPulvirenti1994}.  As usual for a Hamiltonian
system, $W_{\boldsymbol\kappa}^{(N)}$ is conserved along this motion.
Hence every strict local extremum of
$W_{\boldsymbol\kappa}^{(N)}$ is a Lyapunov-stable point-vortex
equilibrium.  For one vortex ($N=1$),
$W_\kappa^{(1)}=\kappa^2H$, so a critical point of the Robin
function is a stationary point-vortex location and is Lyapunov-stable
whenever it is a strict local extremum.

The rigorous connection between concentrated Euler vorticity and point-vortex
dynamics was initiated in \cite{MarchioroPulvirenti1983} and subsequently
developed in \cite{MarchioroPagani1986,Turkington1987,Marchioro1988,
MarchioroPulvirenti1993}.  Later work obtained sharper localization estimates and longer
confinement times \cite{Marchioro1998Localization,CapriniMarchioro2015,
CeciSeis2021,CeciSeis2022,ButtaMarchioro2018,DonatiIftimie2021,Guo2025,
Meyer2025}, as well as global-in-time results for several special configurations
\cite{ChoiJeongYao2024,DavilaDelPinoMussoParmeshwar2026,AbeJeongYao2025}.   A detailed discussion of these results can be found in \cref{sec13}.

A natural question is whether concentrated vortices remain concentrated for
\emph{all} time in a \emph{general} bounded domain.  Compared with its
whole-plane counterpart, the bounded-domain problem presents additional
difficulties: the boundary complicates the vortex dynamics, and some
conservation laws are no longer available.  In general, global-in-time
concentration cannot be expected for arbitrary initial vorticity, so
additional assumptions are needed.

In this paper, we identify a natural \emph{point-vortex trapping condition} that
provides the required structure.  For a single vortex, we assume that the
initial vorticity is nearly radial and concentrated near a strict
local minimum point of the Robin function.  For a pair of
opposite-sign vortices, each
component is nearly radial and the two initial centers lie near a strict local
minimum point of the corresponding two-vortex Kirchhoff--Routh function.  Under
these assumptions, we develop a \emph{new Lyapunov mechanism} that provides uniform-in-time control of vortex shape and of the vortex-center locations, thereby proving global-in-time persistence of concentration.

\subsection{Main results}

For a nonnegative function $f\in L^1(\bR^2)$, let $f^*$ denote its
\emph{symmetric decreasing rearrangement} (see \cite[Chapter~3]{LiebLoss2001}) and define
\begin{equation}\label{def_oaf}
  \A(f):=\inf_{\mathbf b\in\bR^2}
  \|f(\cdot+\mathbf b)-f^*\|_1,
  \qquad
  \Cset(f):=\{\mathbf b\in\bR^2:
  \|f(\cdot+\mathbf b)-f^*\|_1=\A(f)\}.
\end{equation}
Here and below, $\|\cdot\|_p$ denotes the $L^p(\bR^2)$ norm for
$1\leq p\leq\infty$. The functional $\A(f)$ measures the
\emph{orbital asymmetry} of $f$: it is the distance, modulo translations,
from $f$ to its symmetric decreasing rearrangement. The set $\Cset(f)$
contains all translation centers that realize this distance, and its elements
are called \emph{optimal translation centers}. The properties of
$\Cset(f)$ needed below are established in \cref{app:centers}.

From now on, whenever a function on $D$ appears in a full-plane expression,
it is understood to be extended by zero to $\bR^2$. By
$\operatorname{supp}f$, we always mean the \emph{essential support} of $f$; see
\cite[Section~1.1]{LiebLoss2001}.

We are now ready to state our global-in-time persistence results. For clarity
and brevity, we give only their qualitative forms here; the corresponding
quantitative estimates are stated in \cref{sec:setting}.

\begin{theorem}[Global persistence: one concentrated vortex]
\label{thm:intro-one}
Let $\mathbf a_*\in D$ be a strict local minimum point of $H$.  Fix
$\kappa,M,R>0$.  For each $\varepsilon>0$, let
$\mathbf a_\varepsilon\in D$ and let
$\omega_{\varepsilon,0}\in L^\infty(D)$ be nonnegative and satisfy
\begin{equation}\label{inicond1}
  \int_D\omega_{\varepsilon,0}\,\mathrm d\mathbf x=\kappa,
  \qquad
  \|\omega_{\varepsilon,0}\|_\infty\leq M\varepsilon^{-2},
  \qquad
  \operatorname{supp}\omega_{\varepsilon,0}
  \subset B_{R\varepsilon}(\mathbf a_\varepsilon).
\end{equation}
Suppose that
\begin{equation*}
  \mathbf a_\varepsilon\longrightarrow\mathbf a_*,
  \qquad
  \A(\omega_{\varepsilon,0})\longrightarrow0
  \qquad\text{as }\varepsilon\longrightarrow0.
\end{equation*}
Let $\omega_{\varepsilon,t}$, $t\in\bR$, denote the corresponding
Yudovich solution of \eqref{eq:euler-intro}.
Then, as $\varepsilon\to0$,
\begin{equation}\label{eq:qualitative-conclusion1}
  \sup_{t\in\bR}\A(\omega_{\varepsilon,t})\longrightarrow0,
\end{equation}
and
\begin{equation}\label{eq:qualitative-conclusion2}
  \sup_{t\in\bR}\ \sup_{\mathbf b\in
  \Cset(\omega_{\varepsilon,t})}
  |\mathbf b-\mathbf a_*|\longrightarrow0.
\end{equation}
\end{theorem}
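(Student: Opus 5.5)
The proof rests on a Lyapunov-type energy argument. Kinetic energy is conserved and the distribution function of the vorticity is conserved. Together these control both the asymmetry $\A(\omega_{\varepsilon,t})$ and the position of any optimal center. Concretely, I would expand the energy of a concentrated, nearly radial vorticity $\omega$ with optimal center $\mathbf b\in\Cset(\omega)$ as
\[
\Ecal(\omega)=\frac1{4\pi}\iint\ln\frac1{|\mathbf x-\mathbf y|}\,\omega(\mathbf x)\omega(\mathbf y)\,\mathrm d\mathbf x\,\mathrm d\mathbf y-\kappa^2H(\mathbf b)+\text{error}.
\]
The first term is the planar self-interaction energy $E_0(\omega)$. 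The error is controlled by the spread of $\omega$ around $\mathbf b$, using the smoothness of $h$.

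By Riesz's rearrangement inequality, $E_0(\omega)\le E_0(\omega^*)$, and $E_0(\omega^*)$ depends only on the distribution function, which is conserved in time. I would therefore use a quantitative Riesz inequality for the logarithmic kernel, for instance via a Fraenkel-type stability estimate that may already be in the appendix, in the form
\[
E_0(\omega^*)-E_0(\omega)\ge c\,\A(\omega)^2\,(\text{scale factor}).
\]
This gives a deficit functional
\[
\Phi(\omega):=\big[E_0(\omega^*)-E_0(\omega)\big]+\kappa^2\big(H(\mathbf b)-H(\mathbf a_*)\big)\ge 0
\]
near $\mathbf a_*$, since $\mathbf a_*$ is a local minimum of $H$.

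Energy conservation makes $\Phi(\omega_t)$ equal to $\Phi(\omega_0)$ up to the error terms. By the hypotheses, $\Phi(\omega_0)\to0$. Positivity of both summands then forces $\A(\omega_t)\to0$ and $H(\mathbf b)-H(\mathbf a_*)\to0$. Because the minimum is strict, $H(\mathbf b)\to H(\mathbf a_*)$ forces $|\mathbf b-\mathbf a_*|\to0$, which is \eqref{eq:qualitative-conclusion2}.

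The difficulty is that this conditional estimate holds only while the vortex remains concentrated near $\mathbf a_*$, so that the $H$-expansion and error bounds apply. A small asymmetry $\A$ alone does not give support control. Part of the mass, of size at most $\A$, may have filamented far away, and since $G$ is bounded away from the singularity, such mass contributes an energy error of order $\A\cdot|\ln\varepsilon|$-scale quantities. I would handle this in the following order.

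\begin{enumerate}[label=(\roman*)]
\item Prove fixed-time estimates. If $\A(\omega)\le\delta$ and some $\mathbf b\in\Cset(\omega)$ lies in a small ball $B_\rho(\mathbf a_*)$, then the energy expansion above holds with error $o(1)$ uniformly. This uses the bound $\|\omega\|_\infty\le M\varepsilon^{-2}$ and the $L^1$-closeness to the translate of $\omega^*$, which is supported in a disk of radius of order $\varepsilon$.
\item Combine these estimates with conservation to show the conditional estimate. If the hypotheses of (i) hold at time $t$, then in fact $\A(\omega_t)\le\delta/2$ and $\mathbf b\in B_{\rho/2}(\mathbf a_*)$ for all $\mathbf b\in\Cset(\omega_t)$.
\item Propagate globally by a first-exit argument. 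Let $T$ be the supremum of times up to which (i) holds. Because $\Cset$ is set-valued and need not vary continuously, the continuity input should be the $L^1$-continuity in time of $t\mapsto\omega_t$ together with the upper semicontinuity of $\Cset$ from \cref{app:centers}. With these, the "strictly inside" conclusion of (ii) persists on a neighborhood of $T$, contradicting the maximality of $T$. The same argument applies for negative times.
\end{enumerate}

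I expect the main obstacle to be the quantitative energy-deficit inequality and its interaction with the error terms. The logarithmic kernel must yield a gain in $\A$ that beats the error contributed by the boundary term $h$ and by filamented mass, which is only controlled in $L^1$ and not in support. I would first try to bound the far-away mass contribution by $\A\cdot O(1)$, using boundedness of $h$ and of $\ln$ on scales larger than $\varepsilon$. I would then check that the rearrangement deficit controls $\A$ with a constant independent of $\varepsilon$, after rescaling by $\varepsilon$; the rescaled log energy is scale-invariant up to an additive constant.
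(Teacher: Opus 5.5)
Your proposal has a genuine gap at exactly the point you flag as the main obstacle, and the fix you suggest does not close it. Bounding the far-field contribution by $\A\cdot O(1)$ is correct: with $m$ the circulation outside $B_{R\varepsilon}(\mathbf b)$ one has $m\le\frac12\A(\omega)$, and $h\ge -C_D$ gives $\Rcal(\omega)\ge\kappa^2H(\mathbf b)-C\A(\omega)-C\varepsilon$. But the coercivity from a quantitative Riesz (Yan--Yao/Fraenkel-type) inequality is only quadratic, $\D\ge c_0\A^2$. The conserved quantity therefore yields only
\[
c_0\A(\omega_t)^2\le\D(\omega_t)\le \Phi(\omega_0)+C\A(\omega_t)+C\varepsilon .
\]
This error is of order $\sqrt{\D}$ and cannot be absorbed into $\D$. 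Under a bootstrap hypothesis $\D(\omega_t)\le\delta$ you get back only $\D(\omega_t)\lesssim\Phi(\omega_0)+\sqrt{\delta}+\varepsilon$, which exceeds $\delta$ for small $\delta$. So step (ii) never improves the threshold, and $\delta$ cannot be taken large because you need $\A<\kappa$ to control the center set. Likewise, the Robin gap $\kappa^2[H(\mathbf b)-H(\mathbf a_*)]$ is bounded only by $O(\A)$, which gives no localization of $\mathbf b$. Your claim in (i) that the error is $o(1)$ uniformly is also not right: it is $O(\delta)$ and does not vanish as $\varepsilon\to0$.

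The missing ingredient is a tail bound that is linear in the deficit with an arbitrarily small coefficient. The paper proves in \cref{prop:tail} that, for every optimal center $\mathbf b$ and $L\ge L_{\rm t}$,
\[
\int_{\bR^2\setminus B_{L\varepsilon}(\mathbf b)}\omega\,\mathrm d\mathbf x\le \frac{C_{\rm t}}{\ln L}\D(\omega).
\]
This comes from the layer-cake identity $4\pi\D(f)=\int_0^\infty (J_r(f^*)-J_r(f))\,\mathrm dr/r$ together with the core-mass bound $\int_{B_{R\varepsilon}(\mathbf b)}f\ge 3\kappa/4$. Every scale $r\in[2R\varepsilon,(L-R)\varepsilon)$ then contributes at least $\frac{3\kappa}{2}$ times the tail mass, which produces the factor $\ln L$. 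Splitting $\Rcal$ at $B_{L\varepsilon}(\mathbf b)$, where the Lipschitz bound on $h$ costs $CL\varepsilon$, gives $\Rcal(\omega)\ge\kappa^2H(\mathbf b)-\theta\D(\omega)-C_L\varepsilon$ with $\theta$ as small as you like. Only then does the conserved quantity $\D+\Rcal-\kappa^2H(\mathbf a_*)$ close into $\frac12\D+\kappa^2[H(\mathbf b)-H(\mathbf a_*)]\le C\eta_{\varepsilon,0}$. The quadratic coercivity $\D\ge c_0\A^2$ is used only afterwards, to turn smallness of $\D$ into smallness of $\A$ and to keep $\operatorname{diam}\Cset$ of order $\varepsilon$ in the first-exit step; that step is essentially what you describe. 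Finally, $\Phi(\omega_0)\to0$ does not follow directly from $\A(\omega_0)\to0$. It needs the reverse bound $\D(\omega_0)\le C_{\rm loc}\A(\omega_0)$ for data supported in $B_{R\varepsilon}(\mathbf a_\varepsilon)$ (\cref{lem:localized-upper}) and $|\Rcal(\omega_0)-\kappa^2H(\mathbf a_\varepsilon)|=O(\varepsilon)$ (\cref{lem:initial-robin}).
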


\begin{remark}\label{remk12}
By \cite[Lemma~2.2]{Turkington1987},
$H(\mathbf x)\to+\infty$ as
$\mathbf x\to\partial D$. Hence $H$ has at least one global minimum
point in $D$. If $D$ is additionally convex, Caffarelli and Friedman
\cite[Theorem~3.1]{CaffarelliFriedman1985} proved that $H$ is strictly
convex. Its global minimum point is therefore unique and hence strict.
More generally, Bartsch, Micheletti, and Pistoia
\cite[Theorem~1.1]{BartschMichelettiPistoia2019} proved that the Robin and
Kirchhoff--Routh functions are Morse functions for a residual set of sufficiently
small smooth perturbations of any fixed bounded smooth domain. For each
such perturbed domain, every global minimum point of $H$ is nondegenerate
and hence strict. Thus, \cref{thm:intro-one} applies to every bounded smooth
convex domain and, in the above generic sense, to ``most'' bounded smooth
simply connected domains.
\end{remark}

\begin{remark}
The conditions in \eqref{inicond1} describe an initial vortex concentrated on
the spatial scale $O(\varepsilon)$.  Such assumptions, or closely related
variants, are standard in the vortex-blob approximation and
vorticity-confinement literature; see, for example,
\cite{MarchioroPulvirenti1983,MarchioroPagani1986,Marchioro1988,
ButtaMarchioro2018,DonatiIftimie2021,Meyer2025}.
\end{remark}

\begin{remark}\label{rk140}
By \cref{lem:centers} in \cref{app:centers}, the set of optimal translation centers
$\Cset(\omega_{\varepsilon,t})$ is nonempty and compact for every
$t\in\bR$. Moreover, every
$\mathbf b\in\Cset(\omega_{\varepsilon,t})$ satisfies
\[
  \int_{D\setminus B_{R\varepsilon}(\mathbf b)}
  \omega_{\varepsilon,t}\,\mathrm d\mathbf x
  \leq \frac12\A(\omega_{\varepsilon,t}).
\]
Combining this inequality with
\eqref{eq:qualitative-conclusion1}, we find
\begin{equation*}
  \sup_{t\in\bR}\ \sup_{\mathbf b\in
  \Cset(\omega_{\varepsilon,t})}
  \int_{D\setminus B_{R\varepsilon}(\mathbf b)}
  \omega_{\varepsilon,t}\,\mathrm d\mathbf x
  \longrightarrow0
\end{equation*}
as $\varepsilon\to0$.
That is, all but $o(1)$ of the circulation
lies in an $O(\varepsilon)$-radius ball centered at each optimal center, uniformly in time.
\end{remark}

Our second result concerns two concentrated vortices of opposite signs near
a strict local minimum point of the corresponding Kirchhoff--Routh function.

\begin{theorem}[Global persistence: two opposite-sign vortices]
\label{thm:intro-two}
Let $\boldsymbol\kappa=(\kappa_1,\kappa_2)$ satisfy
$\kappa_1>0>\kappa_2$, and let
$(\mathbf a_{*,1},\mathbf a_{*,2})\in\Lambda_2(D)$
be a strict local minimum point of the corresponding Kirchhoff--Routh function
$W_{\boldsymbol\kappa}^{(2)}$ defined in \eqref{eq:intro-KR}.  Fix
$M_i,R_i>0$ for $i=1,2$.  For each $\varepsilon>0$, let
$\mathbf a_{1,\varepsilon},\mathbf a_{2,\varepsilon}\in D$, and let
$\zeta_{i,\varepsilon,0}\in L^\infty(D)$ be nonnegative and satisfy
\[
  \int_D\zeta_{i,\varepsilon,0}\,\mathrm d\mathbf x=|\kappa_i|,
  \qquad
  \|\zeta_{i,\varepsilon,0}\|_\infty\leq M_i\varepsilon^{-2},
  \qquad
  \operatorname{supp}\zeta_{i,\varepsilon,0}
  \subset B_{R_i\varepsilon}(\mathbf a_{i,\varepsilon}),
  \qquad i=1,2.
 \]
Suppose that
\[
  (\mathbf a_{1,\varepsilon},\mathbf a_{2,\varepsilon})
  \longrightarrow
  (\mathbf a_{*,1},\mathbf a_{*,2}),
  \qquad
  \A(\zeta_{i,\varepsilon,0})\longrightarrow0
  \quad(i=1,2)
\]
as $\varepsilon\to0$.  Set
$\omega_{\varepsilon,0}
:=\zeta_{1,\varepsilon,0}-\zeta_{2,\varepsilon,0}$.
Let $\omega_{\varepsilon,t}$, $t\in\bR$, be the corresponding
Yudovich solution of \eqref{eq:euler-intro}, and, for $i=1,2$, let
$\zeta_{i,\varepsilon,t}$ denote the transport of
$\zeta_{i,\varepsilon,0}$ by the associated Euler flow map
(so that
$\omega_{\varepsilon,t}
=\zeta_{1,\varepsilon,t}-\zeta_{2,\varepsilon,t}$
for every $t\in\bR$; see \cref{rk:decomvor}).
Then, as $\varepsilon\to0$,
\begin{equation}\label{eq:two-qualitative}
  \sup_{t\in\bR}\A(\zeta_{i,\varepsilon,t})
  \longrightarrow0
  \qquad(i=1,2),
\end{equation}
and
\begin{equation}\label{eq:two-center-limit}
  \sup_{t\in\bR}\ \sup_{(\mathbf b_1,\mathbf b_2)\in
  \Cset(\zeta_{1,\varepsilon,t})\times
  \Cset(\zeta_{2,\varepsilon,t})}
  |(\mathbf b_1,\mathbf b_2)
  -(\mathbf a_{*,1},\mathbf a_{*,2})|
  \longrightarrow0.
\end{equation}
\end{theorem}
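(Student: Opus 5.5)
The plan is to follow the same Lyapunov mechanism as for \cref{thm:intro-one}, now applied to the pair $(\zeta_{1,\varepsilon,t},\zeta_{2,\varepsilon,t})$. The idea is to expand the conserved energy into a self-interaction part, which controls shape, and a Kirchhoff--Routh part, which controls location.

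\textbf{Step 1: fixed-time energy expansion.} For nonnegative $\zeta_1,\zeta_2$ with masses $|\kappa_i|$, $L^\infty$ bounds $M_i\varepsilon^{-2}$, and optimal centers $\mathbf b_i\in\Cset(\zeta_i)$ lying in a small neighbourhood $U$ of $(\mathbf a_{*,1},\mathbf a_{*,2})$, we write, using \eqref{eq:intro-green} and \eqref{def_kngf},
\[
\Ecal(\zeta_1-\zeta_2)=\sum_{i}\frac1{4\pi}\iint\ln\frac1{|\mathbf x-\mathbf y|}\zeta_i\zeta_i-W^{(2)}_{\boldsymbol\kappa}(\mathbf b_1,\mathbf b_2)+\mathrm{Err}.
\]
The error term $\mathrm{Err}$ should be bounded by $o(1)+C\sum_i\A(\zeta_i)$, where $o(1)$ collects $O(\varepsilon)$ corrections. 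To get this bound, replace $h(\mathbf x,\mathbf y)$ by $h(\mathbf b_i,\mathbf b_i)$ and $G(\mathbf x,\mathbf y)$ by $G(\mathbf b_1,\mathbf b_2)$ on the bulk of the mass, which \cref{rk140} places in $B_{R_i\varepsilon}(\mathbf b_i)$. The tail mass outside these balls is at most $\A(\zeta_i)/2$. Near $\partial D$, or between the two supports, the kernels are only logarithmically singular, and the $L^\infty$ bound keeps the corresponding contribution small.

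\textbf{Step 2: Riesz rearrangement stability.} By Riesz's inequality, $I(f):=\iint\ln\frac1{|\mathbf x-\mathbf y|}f f\le I(f^*)$. We need a quantitative stability version for this class: a modulus $\theta$ with $\theta(s)\to0$ as $s\to0$, uniform after rescaling by $\varepsilon$, such that
\[
I(f^*)-I(f)\le\delta\ \Longrightarrow\ \A(f)\le\theta(\delta).
\]
A compactness argument on rescaled functions should give this. The key point is that $I(\zeta_{i,t}^*)=I(\zeta_{i,0}^*)$, because each $\zeta_{i,\varepsilon,t}$ is transported by the measure-preserving flow and is therefore equimeasurable with $\zeta_{i,\varepsilon,0}$.

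\textbf{Step 3: conditional estimate.} Set $\delta_t:=\sum_i\bigl(I(\zeta_{i,t}^*)-I(\zeta_{i,t})\bigr)\ge0$. Energy conservation combined with Step 1 gives, as long as the centers stay in $U$,
\[
\frac{\delta_t}{4\pi}+W(\mathbf b_{1,t},\mathbf b_{2,t})-W(\mathbf a_{*,1},\mathbf a_{*,2})\le o(1)+C\sum_i\A(\zeta_{i,t}).
\]
Here the initial quantities are $o(1)$ because $\A(\zeta_{i,\varepsilon,0})\to0$ and $\mathbf a_{i,\varepsilon}\to\mathbf a_{*,i}$. Each of the two terms on the left is nonnegative: $\delta_t$ by Riesz, and the $W$-difference because $(\mathbf a_{*,1},\mathbf a_{*,2})$ is a strict local minimum. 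The right side contains $\A$ only linearly, while Step 2 bounds $\A\le\theta(\delta_t)$. This step is the main obstacle, since closing the estimate requires $C\theta(\delta)$ to be dominated by $\delta$, which no mere modulus provides. I would first try to avoid a circular bound by re-estimating $\mathrm{Err}$ differently: the cross and boundary terms are smooth, so their deviation depends on $\zeta_i$ only through $O(\varepsilon)$-scale spreading. Their error is then $o(1)$ plus terms involving the tail mass times a bounded kernel. I would aim for a dichotomy: either $\A(\zeta_{i,t})\le\eta$, in which case $\mathrm{Err}=o(1)+O(\eta)\cdot$small, or the bounds fail. Continuity in time of $\A(\zeta_{i,t})$, which holds because the solution is $L^1$-continuous, together with the strict-minimum gap $\inf_{\partial U}W-W_*>0$, would then let us conclude $\delta_t=o(1)$, and hence $\A=o(1)$ and $\mathbf b_{i,t}\to\mathbf a_{*,i}$.

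\textbf{Step 4: set-valued first exit.} Let $T$ be the first time at which some center in $\Cset(\zeta_{1,t})\times\Cset(\zeta_{2,t})$ leaves a fixed neighbourhood $\overline{U'}$, or at which $\A(\zeta_{i,t})$ exceeds $\eta$. The map $t\mapsto\Cset(\zeta_{i,t})$ is upper semicontinuous by \cref{lem:centers}, and the sets are compact, so at time $T$ the conditions still hold in closed form. Step 3 then gives strictly better bounds at $T$: $\A=o(1)<\eta$ and centers in $U''\Subset U'$. By upper semicontinuity these bounds persist slightly beyond $T$, a contradiction, so $T=\infty$. Running the same argument backward in time covers $t<0$ and yields \eqref{eq:two-qualitative} and \eqref{eq:two-center-limit}.

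Opposite signs matter in Step 1 in two ways: the interaction term $-\kappa_1\kappa_2G$ has the sign needed for the $W$-expansion, and the supports stay separated since the centers remain near distinct points.
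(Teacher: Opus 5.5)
\textbf{Main gap: Step 3 does not close, and the dichotomy you sketch cannot fix it.} However you organize Step 1, the tail contribution has the form (mass far from the core) $\times$ (bounded kernel). The only a priori bound on that mass is $\tfrac12\A$, from \cref{rk140}. The deficit controls the asymmetry only quadratically, $\D\ge c_0\A^2$ (\cref{lem:shape-coercivity}). So a bootstrap assumption $\A(\zeta_{i,\varepsilon,t})\le\eta$ gives at best $\D(\zeta_{i,\varepsilon,t})\lesssim o(1)+C\eta$, hence only $\A(\zeta_{i,\varepsilon,t})\lesssim\sqrt{C\eta/c_0}$, which is worse than $\eta$ for small $\eta$. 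Your Step 4 therefore never gets the strict improvement it needs.

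The missing idea is the logarithmic tail estimate (\cref{prop:tail}). By the layer-cake identity $4\pi\D(f)=\int_0^\infty(J_r(f^*)-J_r(f))\,\mathrm dr/r$ and the core-mass bound \eqref{eq:core-mass-lower}, every pair (core point, point beyond $L\varepsilon$) contributes for all $r\in[2R\varepsilon,(L-R)\varepsilon)$. Hence the mass outside $B_{L\varepsilon}(\mathbf b)$, for $\mathbf b\in\Cset(f)$, is at most $C_{\rm t}\D(f)/\ln L$. This is \emph{linear} in $\D$, with a coefficient that becomes small as $L$ grows.

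Mass inside $B_{L\varepsilon}(\mathbf b_i)$ costs only $O(L\varepsilon)$, by smoothness of $h$ and of $G$ away from the diagonal. This gives
$\Rcal(\zeta_i)\ge\kappa_i^2H(\mathbf b_i)-\theta\D(\zeta_i)-C\varepsilon$ and
$\mathcal I(\zeta_1,\zeta_2)\ge|\kappa_1\kappa_2|G(\mathbf b_1,\mathbf b_2)-\theta\sum_i\D(\zeta_i)-C\varepsilon$, with $\theta$ as small as you like. The conserved quantity $\sum_i\Pcal(\zeta_{i,\varepsilon,0}^*)-\Ecal(\omega_{\varepsilon,t})-W_*$ then yields
$\tfrac12\sum_i\D(\zeta_{i,\varepsilon,t})+W_{\boldsymbol\kappa}^{(2)}(\mathbf b_1,\mathbf b_2)-W_*\le C\eta^{(2)}_{\varepsilon,0}$, which does close. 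Run the bootstrap on $\D$ rather than on $\A$, and convert to $\A$ only at the end.

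\textbf{Secondary issues.}

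First, the two-sided expansion you ask for in Step 1 is both unnecessary and false. The supports do not stay separated; only the cores do. A filament of $\zeta_1$ of mass $m$ lying within $O(\varepsilon)$ of the core of $\zeta_2$ raises $\mathcal I(\zeta_1,\zeta_2)$ by order $m\ln(1/\varepsilon)$, and the $L^\infty$ bound does not make this small.

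What opposite signs actually buy is that $\mathcal I$ enters the Lyapunov quantity with a plus sign. So at time $t$ you only need lower bounds, and there $h\ge-C_D$ and $G>0$ let you discard the tails. A two-sided estimate is needed only at $t=0$, where the supports are localized.

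Relatedly, initial smallness requires $\D(\zeta_{i,\varepsilon,0})\to0$. This does not follow from $\A(\zeta_{i,\varepsilon,0})\to0$ alone; you need the localized reverse bound $\D\le C_{\rm loc}\A$ (\cref{lem:localized-upper}).

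Second, in Step 4, upper semicontinuity and compactness do not place the center set inside $\overline{U'}$ at the exit time $T$. An upper semicontinuous map can acquire a distant new point at $T$. The closed graph only guarantees that some point of $S_\varepsilon(T)$ is a limit of earlier centers. To conclude that all of $S_\varepsilon(T)$ lies near that point, you need the diameter bound $\operatorname{diam}\Cset(f)\le2r$ when $\A(f)<\|f\|_1$ (\cref{lem:centers}(iii)), as used in \cref{lem:first-exit}.
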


\begin{remark}\label{rk:decomvor}
Let
$\mathbf u_{\varepsilon,t}
:=\nabla^\perp(-\Delta)^{-1}\omega_{\varepsilon,t}$.
Then
\[
  \partial_t\omega_{\varepsilon,t}
  +\mathbf u_{\varepsilon,t}\cdot\nabla\omega_{\varepsilon,t}=0,
  \qquad
  \omega_{\varepsilon,0}
  =\zeta_{1,\varepsilon,0}-\zeta_{2,\varepsilon,0}.
\]
Since each $\zeta_{i,\varepsilon,t}$ is transported by the same velocity,
linearity gives
\[
  \partial_t(\zeta_{1,\varepsilon,t}-\zeta_{2,\varepsilon,t})
  +\mathbf u_{\varepsilon,t}\cdot\nabla
  (\zeta_{1,\varepsilon,t}-\zeta_{2,\varepsilon,t})=0,
\]
with initial value
$\zeta_{1,\varepsilon,0}-\zeta_{2,\varepsilon,0}
=\omega_{\varepsilon,0}$.
Hence uniqueness for the linear transport equation
\cite[Lemma~12(ii)]{Burton2005} gives
\[
  \omega_{\varepsilon,t}
  =\zeta_{1,\varepsilon,t}-\zeta_{2,\varepsilon,t}
  \qquad(t\in\bR).
\]
\end{remark}

\begin{remark}
When $\kappa_1\kappa_2<0$, the boundary asymptotics of the Green and Robin
functions imply that
\[
  W_{\boldsymbol\kappa}^{(2)}(\mathbf a_1,\mathbf a_2)
  \longrightarrow+\infty
  \qquad\text{as }
  (\mathbf a_1,\mathbf a_2)\to\partial\Lambda_2(D).
\]
It follows that $W_{\boldsymbol\kappa}^{(2)}$ has at least one global minimum
point in $\Lambda_2(D)$.
By the generic Morse property recalled in \cref{remk12}, for ``most''
bounded smooth simply connected domains, every global minimum point is
nondegenerate and hence strict. In \cref{prop:two-example}, we also provide
an explicit family of analytic domains, obtained by perturbing the disk, for which
$W_{\boldsymbol\kappa}^{(2)}$ with $\kappa_1=-\kappa_2$ has a strict
nondegenerate local minimum point.
\end{remark}

\begin{remark}
In the setting of \cref{thm:intro-two}, the argument in \cref{rk140}
applies separately to each vortex component and yields
\begin{equation*}
  \sup_{t\in\bR}\ \sup_{\mathbf b_i\in
  \Cset(\zeta_{i,\varepsilon,t})}
  \int_{D\setminus B_{R_i\varepsilon}(\mathbf b_i)}
  \zeta_{i,\varepsilon,t}\,\mathrm d\mathbf x
  \longrightarrow0
  \qquad(i=1,2)
\end{equation*}
as $\varepsilon\to0$.
In other words, for each $i=1,2$, all but $o(1)$ of the circulation of the
$i$th vortex lies in an $O(\varepsilon)$-radius ball centered at any $\mathbf b_i\in\Cset(\zeta_{i,\varepsilon,t})$,
uniformly in time.
\end{remark}

We emphasize that \cref{thm:intro-one,thm:intro-two} only show that most of the circulation remains localized; they do not control the diameter of the full support. Thin filaments may form, but they carry only a vanishing amount of circulation.

We also note that the assumption that $D$ is simply connected does not appear to be essential to our approach. We expect that analogous results can be established in bounded multiply connected domains, provided that the circulations around the inner boundary components are fixed and that the Kirchhoff--Routh function is replaced by the corresponding generalized Hamiltonian (see (1.4) in \cite{Donati2023}).

\subsection{Relation to earlier work}\label{sec13}

As reviewed in \cref{subsec:euler-point-vortices}, the evolution of
concentrated vortices has been studied extensively. Our work differs from earlier work in three main respects. First, the control is global in
time, whereas most previous approximation and confinement results apply only
on finite time intervals.  Second, within the point-vortex trapping regime,
the initial data need only be concentrated and nearly radial; they need not
be close to a steady Euler flow.  Third, we allow a general bounded domain and impose no symmetry on either the domain or the initial data.
Below we discuss these distinctions in more detail by comparing our results
with three closely related lines of work.

\begin{enumerate}[label=\textup{(\roman*)}]

\item  \emph{Point-vortex approximation and long-time confinement.}
The literature on point-vortex approximation has developed from fixed-time
convergence results to longer, scale-dependent confinement estimates.  The
classical vortex-concentration results
\cite{MarchioroPulvirenti1983,MarchioroPagani1986,Marchioro1988,
MarchioroPulvirenti1993,Turkington1987}, together with later quantitative
refinements \cite{CeciSeis2021}, show that concentrated Euler vorticity
follows point-vortex dynamics on each fixed time interval on which the
limiting vortices remain mutually separated and away from the boundary.
Butt\`a and Marchioro \cite{ButtaMarchioro2018} obtain longer,
$\varepsilon$-dependent confinement time scales under symmetry assumptions,
while Donati and Iftimie \cite{DonatiIftimie2021} prove algebraically long
confinement of the material support around a stationary point vortex under
an additional domain-dependent condition.  Donati \cite{Donati2025} obtains
algebraically long support confinement near special polygonal vortex
crystals.  Meyer \cite{Meyer2025} allows
multiple separated, nearly circular vortices over long time scales and proves
a stability estimate for the logarithmic interaction energy.

The spatial and configurational scope of these results differs from ours.
Donati \cite{Donati2025} and Donati and Iftimie
\cite{DonatiIftimie2021} control the full support, whereas we control the circulation outside a small ball.  The confinement intervals in those works remain finite, though long, and rely on special point-vortex configurations or an additional condition
on the domain.  Meyer \cite{Meyer2025} allows multiple vortices, whereas here we restrict attention to one vortex or one opposite-sign pair and allow the relevant strict local minimum point to be degenerate.

The decisive distinction, however, is the time horizon.  In all the
approximation and confinement results cited in this item, one either
prescribes an arbitrary finite time interval before taking the concentration
limit, or obtains an $\varepsilon$-dependent interval that, although long,
remains finite for each fixed $\varepsilon$.  Thus none of these results
provides global-in-time control at a fixed concentration scale.  Our results
do: for every sufficiently small fixed $\varepsilon$, the control is uniform
over all $t\in\bR$.

\item  \emph{Nonlinear stability of steady concentrated Euler flows.}
Variational methods have been used to construct steady Euler flows with
concentrated vorticity and establish their nonlinear stability in bounded
domains; see \cite{Burton2005,CaoWang2019,CaoWang2021,Wang2024}.  In
particular, Cao and Wang \cite{CaoWang2019,CaoWang2021} studied the existence
and nonlinear stability of steady vortex patches concentrated near a strict
local minimum point of the Robin function or the opposite-sign two-vortex
Kirchhoff--Routh function. Wang \cite{Wang2024} subsequently extended these
results to steady Euler flows with more general vorticity distributions.

By contrast, our results do not require the initial data to be close to any
steady Euler flow. Subject to the concentration and point-vortex trapping
assumptions in \cref{thm:intro-one,thm:intro-two}, the initial vorticity
profiles may otherwise be arbitrary, provided that their orbital asymmetry
is small. Thus, our results do not follow from the nonlinear stability of
any particular steady flow.

\item  \emph{Global-in-time results for special configurations.}
Choi, Jeong, and Yao \cite{ChoiJeongYao2024} prove global-in-time
stability for planar vortex quadrupoles that are odd with respect to both coordinate axes.  As in our setting, their initial data consist of
concentrated vortices that are close, up to translation, to their radial
rearrangements.  Their theorem gives global-in-time control near an
odd--odd point-vortex orbit.  Among the available global-in-time stability
results for interacting concentrated vortices, this is especially close to
our setting because it does not assume closeness to an equilibrium or a
relative equilibrium.  For comparison, a simpler whole-plane single-vortex
case is recorded in \cref{thm:whole-plane-persistence}.  The main distinction
between their result and ours nevertheless lies not in the local vortex
profiles but in the odd--odd symmetry imposed on the whole-plane
configuration.

Other global-in-time results are known for expanding or specially constructed
periodic configurations.  Zbarsky \cite{Zbarsky2021} treats three vortex
blobs near a self-similarly expanding point-vortex configuration in the whole
plane.  Hassainia, Hmidi, and Masmoudi \cite{HassainiaHmidiMasmoudi2025}
construct an all-time leapfrogging quartet of concentrated vortex patches,
while Hassainia, Hmidi, and Roulley \cite{HassainiaHmidiRoulley2026}
construct time-periodic concentrated vortex patches in bounded simply connected
domains under nondegeneracy assumptions.  Related whole-plane results
also include the four-vortex solutions constructed by D\'avila, del Pino,
Musso, and Parmeshwar \cite{DavilaDelPinoMussoParmeshwar2026} and the
stability of multiple Lamb dipoles proved by Abe, Jeong, and Yao
\cite{AbeJeongYao2025}.  These results concern specific expanding,
periodic, or traveling structures, or perturbations of Lamb dipoles; none
treats arbitrary nearly radial concentrated profiles trapped near a
point-vortex equilibrium in a general bounded domain.

The Choi--Jeong--Yao position estimate is formulated in terms of the mass
center of the vortex in a quadrant, whereas our argument uses the full set of
optimal translation centers.  The reasons for this difference are discussed
in \cref{subsec:mass-centers}.
\end{enumerate}

Our approach may also apply to other concentrated vortex solutions with a
similar energy-based Lyapunov mechanism.  Examples include traveling and
rotating vortex pairs in the plane, and vortex pairs of equal and opposite
strengths on the flat torus or the sphere.  In each case, one would use the
appropriate Green function and an energy
functional adapted to the relevant moving frame.

\subsection{Organization of the paper}

In \cref{sec:setting}, we introduce the notation, state the quantitative
theorems, and outline the proof strategy.  The three fixed-time estimates are
then established in order: the quantitative rearrangement estimates in
\cref{sec:quantitative-rearrangement}, the logarithmic tail estimate in
\cref{sec:estimates_tail}, and the regular energy estimate in
\cref{sec:robin}.  We prove the one-vortex result in \cref{sec:global} and the
opposite-sign two-vortex result in \cref{sec:two-vortices}.
\Cref{sec:further-discussion} contains three further discussions: why the
Lyapunov mechanism does not extend directly to the same-sign case, why the
$L^1$ norm is the natural orbital distance, and why the present argument uses
optimal translation centers rather than mass centers.  The appendices collect
several auxiliary results, including the basic properties of optimal centers,
a set-valued first-exit lemma, and the construction of an explicit
noncircular domain that admits a trapped opposite-sign pair.

\section{Quantitative estimates and proof strategy}
\label{sec:setting}

\subsection{Several functionals}
In this subsection, we introduce several functionals used throughout the paper.

For a compactly supported $f\in L^\infty(\bR^2)$, define its
\emph{logarithmic self-interaction energy} by
\begin{equation*}
  \Pcal(f):=\frac1{4\pi}
  \iint_{\bR^2\times\bR^2}
  \ln\frac1{|\mathbf x-\mathbf y|}
  f(\mathbf x)f(\mathbf y)\,\mathrm d\mathbf x\,\mathrm d\mathbf y.
\end{equation*}
For every $1<p<\infty$, there exists $C_{D,p}>0$ such that, for all
$f,g\in L^\infty(D)$,
\begin{equation}\label{eq:P-Lipschitz}
  |\Pcal(f)-\Pcal(g)|
  \leq C_{D,p}(\|f\|_p+\|g\|_p)\|f-g\|_p.
\end{equation}
This follows from H\"older's inequality, since, for
$p'=p/(p-1)$, the logarithmic kernel belongs to
$L^{p'}(D\times D)$.

For a nonnegative, compactly supported $f\in L^\infty(\bR^2)$, define its
\emph{logarithmic rearrangement deficit} by
\begin{equation*}
  \D(f):=\Pcal(f^*)-\Pcal(f).
\end{equation*}
The Riesz rearrangement inequality \cite[Theorem~3.7]{LiebLoss2001} gives
$\D(f)\geq0$.  The scale-uniform estimates relating $\D$ and the orbital asymmetry functional $\A$ (see \eqref{def_oaf}) are given in \cref{sec:quantitative-rearrangement}.

By \eqref{eq:intro-green} and \eqref{def_kngf}, for
$f\in L^\infty(D)$, the kinetic energy has the decomposition
\begin{equation}\label{eq:energy-split}
  \Ecal(f)=\Pcal(f)-\Rcal(f),\qquad \Rcal(f):=\frac12\iint_{D\times D}
  h(\mathbf x,\mathbf y)f(\mathbf x)f(\mathbf y)
  \,\mathrm d\mathbf x\,\mathrm d\mathbf y,
\end{equation}
where $\Rcal$ is called the \emph{regular energy}.  For
$f,g\in L^\infty(D)$, we also use the \emph{cross interaction}
\begin{equation}\label{eq:cross-interaction}
  \mathcal I(f,g):=
  \iint_{D\times D}G(\mathbf x,\mathbf y)
  f(\mathbf x)g(\mathbf y)
  \,\mathrm d\mathbf x\,\mathrm d\mathbf y.
\end{equation}

\subsection{Quantitative estimates}

In this subsection, we state quantitative estimates for the shape and
location of concentrated vortices.  We measure the shape error by $\D$ and
the position error by the corresponding Robin or Kirchhoff--Routh gap.
Although less direct than $\A$ and the Euclidean distance from the reference
point-vortex configuration, these quantities are naturally connected to
energy conservation and equimeasurability.  Together with the scale-uniform comparisons between $\D$ and $\A$
established in \cref{sec:quantitative-rearrangement}, these estimates readily yield the qualitative results
\cref{thm:intro-one,thm:intro-two}.

\subsubsection{One concentrated vortex}
For sets $A,B\subset\bR^2$, we write $A\Subset B$ if
$\overline A$ is a compact subset of $B$.

\begin{theorem}[Quantitative estimates: one vortex]
\label{thm:main}
Let $\mathbf a_*\in D$ be a strict local minimum point of $H$, and choose
$\rho>0$ such that
\begin{equation}\label{eq:well}
  B_\rho(\mathbf a_*)\Subset D,
\end{equation}
and
\begin{equation}\label{eq:strict-well}
  H(\mathbf b)>H(\mathbf a_*)
  \quad
  \bigl(
  \mathbf b\in\overline{B_\rho(\mathbf a_*)}
  \setminus\{\mathbf a_*\}
  \bigr).
\end{equation}
Fix $\kappa,M,R>0$.  Then there exist positive constants
$C$, $\eta_0$, and $\varepsilon_0$, depending only on
$D,\mathbf a_*,\rho,\kappa,M$, and $R$, such that the following holds.

For every $0<\varepsilon<\varepsilon_0$, let
$\mathbf a_\varepsilon\in B_\rho(\mathbf a_*)$, and let
$\omega_{\varepsilon,0}\in L^\infty(D)$ be nonnegative and satisfy
\[
  \int_D\omega_{\varepsilon,0}\,\mathrm d\mathbf x=\kappa,
  \qquad
  \|\omega_{\varepsilon,0}\|_\infty\leq M\varepsilon^{-2},
  \qquad
  \operatorname{supp}\omega_{\varepsilon,0}
  \subset B_{R\varepsilon}(\mathbf a_\varepsilon).
\]
Let $\omega_{\varepsilon,t}$, $t\in\bR$, be the corresponding Yudovich
solution of \eqref{eq:euler-intro}.  If
\begin{equation}\label{eq:quantitative-smallness}
  \eta_{\varepsilon,0}
  :=\D(\omega_{\varepsilon,0})
  +\kappa^2[H(\mathbf a_\varepsilon)-H(\mathbf a_*)]
  +\varepsilon
  \leq\eta_0,
\end{equation}
then, for every $t\in\bR$,
\begin{equation}\label{eq:center-confinement}
  \Cset(\omega_{\varepsilon,t})\subset B_\rho(\mathbf a_*)
\end{equation}
and
\begin{equation}\label{eq:quantitative-main}
  \D(\omega_{\varepsilon,t})
  +\kappa^2\sup_{\mathbf b\in\Cset(\omega_{\varepsilon,t})}
  [H(\mathbf b)-H(\mathbf a_*)]
  \leq C\eta_{\varepsilon,0}.
\end{equation}
\end{theorem}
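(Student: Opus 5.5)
We will follow the Lyapunov strategy outlined in the abstract: combine energy conservation $\Ecal(\omega_{\varepsilon,t})=\Ecal(\omega_{\varepsilon,0})$ with equimeasurability, which gives $\omega_{\varepsilon,t}^*=\omega_{\varepsilon,0}^*$ and hence $\Pcal(\omega_{\varepsilon,t}^*)=\Pcal(\omega_{\varepsilon,0}^*)$. From $\Ecal=\Pcal-\Rcal$ and $\D(f)=\Pcal(f^*)-\Pcal(f)$ we get the exact identity
\[
  \D(\omega_{\varepsilon,t})+\Rcal(\omega_{\varepsilon,t})
  =\D(\omega_{\varepsilon,0})+\Rcal(\omega_{\varepsilon,0}),
  \qquad t\in\bR .
\]
The whole proof consists of showing that this conserved quantity controls $\D+\kappa^2[H(\mathbf b)-H(\mathbf a_*)]$ for every optimal center $\mathbf b$, as long as the centers stay in $B_\rho(\mathbf a_*)$.

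\textbf{Fixed-time estimate.} For a nonnegative $f$ with mass $\kappa$, $\|f\|_\infty\le M\varepsilon^{-2}$ and some $\mathbf b\in\Cset(f)$ with $\mathbf b\in\overline{B_\rho(\mathbf a_*)}$, we aim for the two-sided bound
\[
  \bigl|\Rcal(f)-\kappa^2H(\mathbf b)\bigr|
  \le C\bigl(\A(f)+\varepsilon+\text{tail}\bigr).
\]
This should be proved in three steps.

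\begin{enumerate}[label=\textup{(\alph*)}]
\item Split $f$ into the part inside $B_{K\varepsilon}(\mathbf b)$ and the tail. By \cref{rk140}-type estimates, the tail mass is at most $\tfrac12\A(f)$.
\item On the core, use smoothness of $h$ on a compact neighbourhood of $\overline{B_\rho(\mathbf a_*)}$. This gives $\frac12 h(\mathbf x,\mathbf y)=H(\mathbf b)+O(\varepsilon)$ there.
\item The tail may approach $\partial D$, where $h$ blows up logarithmically. Here we use $h\le \frac1{2\pi}\ln\frac1{|\mathbf x-\mathbf y|}+C$, positivity of $G$, and $f\ge0$ to obtain a one-sided bound. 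The key point is that $\Rcal(f)\ge$ (core contribution) $- C\,\A(f)$, since $h$ is bounded below on $D\times D$. This lower bound on $\Rcal$ is the direction actually needed.
\end{enumerate}

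Next, invoke the scale-uniform quantitative rearrangement estimates of \cref{sec:quantitative-rearrangement}, in the form $\A(f)\le C\,\D(f)^{\theta}$ or better. The linear-error version is preferable: the error term $\A(f)$ should be absorbed into $\D$ via a bound $\A\lesssim\sqrt{\D}$ plus a logarithmic tail estimate showing the tail cost is $\lesssim \D+\varepsilon$.

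\textbf{Conditional estimate.} Combining the above, if $\Cset(\omega_{\varepsilon,t})\subset \overline{B_\rho(\mathbf a_*)}$, then
\[
  \D(\omega_{\varepsilon,t})+\kappa^2[H(\mathbf b)-H(\mathbf a_*)]
  \le \D(\omega_{\varepsilon,0})+\Rcal(\omega_{\varepsilon,0})-\kappa^2H(\mathbf a_*)+C(\D(\omega_{\varepsilon,t})^{1/2}\cdot\text{small}+\varepsilon).
\]
At $t=0$ the support lies in $B_{R\varepsilon}(\mathbf a_\varepsilon)$, so $\Rcal(\omega_{\varepsilon,0})=\kappa^2H(\mathbf a_\varepsilon)+O(\varepsilon)$. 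The right side is therefore $\le C\eta_{\varepsilon,0}$ after absorbing. Since $H(\mathbf b)-H(\mathbf a_*)\ge0$ on the ball, both terms are bounded by $C\eta_{\varepsilon,0}$.

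\textbf{Globalization.} By \eqref{eq:strict-well} and compactness, $\min_{|\mathbf b-\mathbf a_*|\in[\rho/2,\rho]}H-H(\mathbf a_*)=:\delta>0$. Choose $\eta_0$ with $C\eta_0<\kappa^2\delta$. The conditional estimate then forces every optimal center to lie in $B_{\rho/2}(\mathbf a_*)$ whenever all centers lie in $\overline{B_\rho(\mathbf a_*)}$. Initially $\Cset(\omega_{\varepsilon,0})$ lies near $\mathbf a_\varepsilon$. A set-valued first-exit argument then applies: $t\mapsto\Cset(\omega_{\varepsilon,t})$ is upper semicontinuous, using $L^1$-continuity of $t\mapsto\omega_{\varepsilon,t}$ and the compactness of centers from \cref{lem:centers}. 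So the first time a center reaches $\partial B_\rho$ would have all centers in $\overline{B_\rho}$, hence in $B_{\rho/2}$, which is a contradiction. The same applies backward in time.

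\textbf{Main obstacle.} The main obstacle is the fixed-time lower bound on $\Rcal(f)-\kappa^2H(\mathbf b)$ with an error that is absorbable, i.e. linear in $\D$ rather than of order $\sqrt{\D}$ times an $O(1)$ constant. The plan is to handle this by a careful tail estimate using the logarithmic kernel: the portion of mass far from $\mathbf b$ costs logarithmic self-energy proportional to (mass)$\times\ln(\text{distance}/\varepsilon)$. This dominates its contribution to $\Rcal$, so the tail is paid for by $\D$.
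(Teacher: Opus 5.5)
Your Lyapunov identity $\D(\omega_{\varepsilon,t})+\Rcal(\omega_{\varepsilon,t})=\D(\omega_{\varepsilon,0})+\Rcal(\omega_{\varepsilon,0})$, the initial approximation $\Rcal(\omega_{\varepsilon,0})=\kappa^2H(\mathbf a_\varepsilon)+O(\varepsilon)$, and the plan of a lower bound on $\Rcal$ at optimal centers followed by a set-valued first-exit argument match the paper's architecture. Your paragraph on the main obstacle also names the right mechanism. The problem is that the two load-bearing steps are not established, and as written they fail.

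\emph{The fixed-time bound.} Your route uses tail mass $\le\tfrac12\A(f)$ and then $\A\lesssim\D^{1/2}$, which leaves an error of size $C\D^{1/2}$. A term $s\,\D^{1/2}$ with $s$ a fixed small constant can only be absorbed as $\tfrac12\D+\tfrac12 s^2$. That leaves an additive constant not proportional to $\eta_{\varepsilon,0}$, so the linear bound by $C\eta_{\varepsilon,0}$ does not follow. What you need is
\[
\Rcal(f)\ge\kappa^2H(\mathbf b)-\theta\D(f)-C\varepsilon
\quad\text{for every } \mathbf b\in\Cset(f)\cap\overline{B_\rho(\mathbf a_*)},
\]
with $\theta<1$; in fact the paper makes $\theta$ arbitrarily small. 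The paper obtains this from the tail bound $\int_{\bR^2\setminus B_{L\varepsilon}(\mathbf b)}f\,\mathrm d\mathbf x\le C\D(f)/\ln L$, which your heuristic does not prove. One must compare $f$ and $f^*$ scale by scale. The paper does this with the identity $4\pi\D(f)=\int_0^\infty[J_r(f^*)-J_r(f)]\,r^{-1}\mathrm dr$, where $J_r(f)=\iint_{|\mathbf x-\mathbf y|<r}f(\mathbf x)f(\mathbf y)\,\mathrm d\mathbf x\,\mathrm d\mathbf y$. It uses Riesz's inequality $J_r(f^*)\ge J_r(f)$ for each $r$, and the estimate $\kappa^2-J_r(f)\ge2(\text{core mass})(\text{tail mass})$ for $2R\varepsilon\le r<(L-R)\varepsilon$. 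This last step needs a \emph{uniform} lower bound on the mass in $B_{R\varepsilon}(\mathbf b)$; the paper uses $3\kappa/4$. That bound holds only when $\A(f)<\kappa/2$, i.e.\ (by $\D\ge c_0\A^2$) only when $\D(f)$ lies below a fixed threshold. Once the tail bound is available, $h\ge-C_D$ on $D\times D$ and $h\ge2H(\mathbf b)-CL\varepsilon$ on $B_{L\varepsilon}(\mathbf b)^2$ give a coefficient $C/\ln L$ in front of $\D$, and one fixes $L$ large.

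\emph{The globalization.} Since the regular-energy bound needs $\D(\omega_{\varepsilon,t})$ below a fixed threshold $\delta$, your conditional estimate cannot be asserted under the sole hypothesis $\Cset(\omega_{\varepsilon,t})\subset\overline{B_\rho(\mathbf a_*)}$. The bootstrap must also carry $\D(\omega_{\varepsilon,t})\le\delta$. This is legitimate because $t\mapsto\D(\omega_{\varepsilon,t})=\Pcal(\omega_{\varepsilon,0}^*)-\Pcal(\omega_{\varepsilon,t})$ is continuous.

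More seriously, your claim that at the first exit time all centers lie in $\overline{B_\rho(\mathbf a_*)}$ does not follow from upper semicontinuity. An upper semicontinuous compact-valued map may acquire new, distant points exactly at the exit time $T_*$, for example when two separated local minima of $\mathbf b\mapsto\|f(\cdot+\mathbf b)-f^*\|_1$ become equal. In that case the conditional estimate cannot be applied at $T_*$.

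The missing ingredient is a diameter bound. If $\A(f)<\kappa$, each optimal center carries more than $\kappa/2$ of the mass in its $R\varepsilon$-ball, so any two such balls intersect and $\operatorname{diam}\Cset(f)\le2R\varepsilon$. The paper closes the argument as follows:
\begin{enumerate}
\item Closedness of the graph gives some $\bar{\mathbf b}\in\Cset(\omega_{\varepsilon,T_*})$ in a fixed sublevel set of $H$ compactly contained in $B_\rho(\mathbf a_*)$.
\item Continuity gives $\D(\omega_{\varepsilon,T_*})$ small, hence $\A<\kappa$.
\item The diameter bound then places all of $\Cset(\omega_{\varepsilon,T_*})$ inside $B_\rho(\mathbf a_*)$ for small $\varepsilon$.
\item Upper semicontinuity extends both controls past $T_*$.
\end{enumerate}
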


\begin{remark}
The $\varepsilon$ contribution in $\eta_{\varepsilon,0}$, and hence on the
right-hand side of \eqref{eq:quantitative-main}, comes from
approximating the regular energy of a concentrated vortex by
the Robin energy of a point vortex; see
\cref{prop:robin-lower,lem:initial-robin}.  In general, this term cannot be
removed.  Indeed, for centered radial initial data
as in the following corollary,
all the terms in $\eta_{\varepsilon,0}$ other than $\varepsilon$ vanish.
Without the $\varepsilon$ term, the
estimate would force
\[
  \D(\omega_{\varepsilon,t})=0,
  \qquad
  \Cset(\omega_{\varepsilon,t})=\{\mathbf a_*\}
  \quad(t\in\bR).
\]
Thus every such initial vortex would be a steady solution, which is generally
false.
\end{remark}

The following is an immediate consequence of \cref{thm:main}.

\begin{corollary}[Centered radial initial data]\label{coro_crdata}
Under the assumptions of \cref{thm:main}, suppose in addition that
\[
  \mathbf a_\varepsilon=\mathbf a_*,
  \qquad
  \omega_{\varepsilon,0}(\mathbf x)
  =\omega_{\varepsilon,0}^*(\mathbf x-\mathbf a_*).
\]
Then, for all sufficiently small $\varepsilon>0$,
\begin{equation}\label{cont_db}
  \sup_{t\in\bR}\left\{
  \D(\omega_{\varepsilon,t})
  +\kappa^2\sup_{\mathbf b\in\Cset(\omega_{\varepsilon,t})}
  [H(\mathbf b)-H(\mathbf a_*)]\right\}
  \leq C\varepsilon,
\end{equation}
and
\begin{equation}\label{eq:natural-shape-scale}
  \sup_{t\in\bR}\A(\omega_{\varepsilon,t})
  \leq C\varepsilon^{1/2}.
\end{equation}
If $D^2H(\mathbf a_*)$ is positive definite, then, after possibly decreasing
$\rho$,
\begin{equation}\label{eq:natural-center-scale}
  \sup_{t\in\bR}\ \sup_{\mathbf b\in\Cset(\omega_{\varepsilon,t})}
  |\mathbf b-\mathbf a_*|
  \leq C\varepsilon^{1/2}.
\end{equation}
Here $C>0$ depends only on
$D,\mathbf a_*,\rho,\kappa,M$, and $R$.
\end{corollary}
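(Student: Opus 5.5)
The plan is to apply \cref{thm:main} directly, compute $\eta_{\varepsilon,0}$ for centered radial data, and then translate the resulting bounds on $\D$ and on the Robin gap into bounds on $\A$ and on $|\mathbf b-\mathbf a_*|$.

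\textbf{Step 1: the size of $\eta_{\varepsilon,0}$.} Since $\omega_{\varepsilon,0}(\mathbf x)=\omega_{\varepsilon,0}^*(\mathbf x-\mathbf a_*)$, the data are a translate of their own rearrangement. The logarithmic energy $\Pcal$ is translation invariant, so
\[
\D(\omega_{\varepsilon,0})=\Pcal(\omega_{\varepsilon,0}^*)-\Pcal(\omega_{\varepsilon,0})=0 .
\]
Also $H(\mathbf a_\varepsilon)-H(\mathbf a_*)=0$, because $\mathbf a_\varepsilon=\mathbf a_*$. Hence $\eta_{\varepsilon,0}=\varepsilon$. For $\varepsilon<\min\{\varepsilon_0,\eta_0\}$ the smallness condition \eqref{eq:quantitative-smallness} holds. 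Then \eqref{eq:quantitative-main}, applied for every $t$, gives \eqref{cont_db} with the constant $C$ of \cref{thm:main}.

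\textbf{Step 2: the shape bound \eqref{eq:natural-shape-scale}.} I will invoke the scale-uniform comparison between $\D$ and $\A$ from \cref{sec:quantitative-rearrangement}. The form I expect is a quantitative Riesz-type inequality
\[
\A(f)^2\le C\,\D(f)
\]
for nonnegative $f$ with $\int f=\kappa$ and $\|f\|_\infty\le M\varepsilon^{-2}$. Both hypotheses are preserved along the flow by equimeasurability. Scale uniformity is plausible because rescaling $f_\varepsilon(\mathbf x)=\varepsilon^{-2}g(\mathbf x/\varepsilon)$ changes $\Pcal$ only by the additive term $\frac{\kappa^2}{2\pi}\ln(1/\varepsilon)$. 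That term cancels in $\D$, and $\A$ is invariant under this rescaling. Combining the inequality with \eqref{cont_db} gives $\A(\omega_{\varepsilon,t})\le C\varepsilon^{1/2}$ uniformly in $t$.

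\textbf{Step 3: the center bound \eqref{eq:natural-center-scale}.} If $D^2H(\mathbf a_*)$ is positive definite, Taylor expansion at the critical point $\mathbf a_*$ yields, after shrinking $\rho$ (which is still admissible in \cref{thm:main}),
\[
H(\mathbf b)-H(\mathbf a_*)\ge \lambda|\mathbf b-\mathbf a_*|^2 \qquad \text{on } B_\rho(\mathbf a_*),
\]
for some $\lambda>0$. By \eqref{eq:center-confinement}, every optimal center $\mathbf b\in\Cset(\omega_{\varepsilon,t})$ lies in $B_\rho(\mathbf a_*)$. Then \eqref{cont_db} gives $\kappa^2\lambda|\mathbf b-\mathbf a_*|^2\le C\varepsilon$, and hence the $C\varepsilon^{1/2}$ bound.

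\textbf{Main obstacle.} The one nontrivial ingredient is the exact form of the $\D$--$\A$ comparison in Step 2, in particular that its exponent is $2$. If that section only gives a weaker modulus, say $\A\le\phi(\D)$, then \eqref{eq:natural-shape-scale} would have to be stated with that modulus in place of $\varepsilon^{1/2}$. I am assuming the quadratic form here.
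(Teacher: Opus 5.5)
Your proposal is correct and follows the paper's proof essentially step for step: $\eta_{\varepsilon,0}=\varepsilon$ gives \eqref{cont_db} via \eqref{eq:quantitative-main}; \cref{lem:shape-coercivity}, which is indeed the quadratic bound $\D\geq c_0\A^2$, gives \eqref{eq:natural-shape-scale}; and the quadratic lower bound on $H$ near $\mathbf a_*$, combined with \eqref{eq:center-confinement}, gives \eqref{eq:natural-center-scale}. One correction: that lemma also requires $\operatorname{supp}f^*\subset B_{R\varepsilon}(\mathbf 0)$, because with only the mass and $L^\infty$ bounds the constant in \cref{lem:yan-yao-log} degenerates, so you must check this hypothesis along the flow, which follows from $\omega^*_{\varepsilon,t}=\omega^*_{\varepsilon,0}$ and $\operatorname{supp}\omega_{\varepsilon,0}\subset B_{R\varepsilon}(\mathbf a_*)$.
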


\begin{proof}
The centered radial assumption gives
$\eta_{\varepsilon,0}=\varepsilon$, so the first estimate follows from
\eqref{eq:quantitative-main}.  By equimeasurability and
\cref{lem:shape-coercivity},
\[
  \A(\omega_{\varepsilon,t})^2
  \leq C\D(\omega_{\varepsilon,t})\leq C\varepsilon,
\]
uniformly in $t$, which proves \eqref{eq:natural-shape-scale}.

If $D^2H(\mathbf a_*)$ is positive definite, then, after decreasing $\rho$ if
necessary,
\[
  H(\mathbf b)-H(\mathbf a_*)\geq c|\mathbf b-\mathbf a_*|^2,
  \qquad \mathbf b\in B_\rho(\mathbf a_*).
\]
Combining this with \eqref{eq:center-confinement} and \eqref{cont_db} gives
\eqref{eq:natural-center-scale}.
\end{proof}

\subsubsection{Two opposite-sign concentrated vortices}

\begin{theorem}[Quantitative estimates: two opposite-sign vortices]
\label{thm:two-vortex}
Let $\boldsymbol\kappa=(\kappa_1,\kappa_2)$ satisfy
$\kappa_1>0>\kappa_2$, and let
$(\mathbf a_{*,1},\mathbf a_{*,2})\in\Lambda_2(D)$ be a strict local
minimum point of $W_{\boldsymbol\kappa}^{(2)}$.  Choose $\rho>0$ such that
\[
  B_\rho(\mathbf a_{*,1})\times B_\rho(\mathbf a_{*,2})
  \Subset\Lambda_2(D),
\]
and
\begin{equation}\label{eq:two-strict-well}
  \begin{aligned}
  W_{\boldsymbol\kappa}^{(2)}(\mathbf b_1,\mathbf b_2)
  &>W_{\boldsymbol\kappa}^{(2)}
  (\mathbf a_{*,1},\mathbf a_{*,2})\\
  &\quad\text{for every }(\mathbf b_1,\mathbf b_2)\in
  \overline{B_\rho(\mathbf a_{*,1})\times
  B_\rho(\mathbf a_{*,2})}
  \setminus\{(\mathbf a_{*,1},\mathbf a_{*,2})\}.
  \end{aligned}
\end{equation}
Fix $M_i,R_i>0$ for $i=1,2$.  Then there exist positive constants
$C$, $\eta_0$, and $\varepsilon_0$, depending only on
$D,\rho$, and $\mathbf a_{*,i},\kappa_i,M_i,R_i$, $i=1,2$,
such that the following holds.

For every $0<\varepsilon<\varepsilon_0$, let
$(\mathbf a_{1,\varepsilon},\mathbf a_{2,\varepsilon})
\in B_\rho(\mathbf a_{*,1})\times B_\rho(\mathbf a_{*,2})$, and let
$\zeta_{i,\varepsilon,0}\in L^\infty(D)$ be nonnegative and satisfy
\[
  \int_D\zeta_{i,\varepsilon,0}\,\mathrm d\mathbf x=|\kappa_i|,
  \qquad
  \|\zeta_{i,\varepsilon,0}\|_\infty
  \leq M_i\varepsilon^{-2},
  \qquad
  \operatorname{supp}\zeta_{i,\varepsilon,0}
  \subset B_{R_i\varepsilon}(\mathbf a_{i,\varepsilon}),
  \quad i=1,2.
\]
Set
$\omega_{\varepsilon,0}
:=\zeta_{1,\varepsilon,0}-\zeta_{2,\varepsilon,0}$.
Let $\omega_{\varepsilon,t}$, $t\in\bR$, be the corresponding Yudovich
solution of \eqref{eq:euler-intro}, and let
$\zeta_{i,\varepsilon,t}$ denote the transport of
$\zeta_{i,\varepsilon,0}$ by the associated Euler flow map.
If
\begin{equation}\label{eq:two-quantitative-smallness}
  \eta^{(2)}_{\varepsilon,0}
  :=\sum_{i=1}^2\D(\zeta_{i,\varepsilon,0})
  +\Bigl[W_{\boldsymbol\kappa}^{(2)}
  (\mathbf a_{1,\varepsilon},\mathbf a_{2,\varepsilon})
  -W_{\boldsymbol\kappa}^{(2)}
  (\mathbf a_{*,1},\mathbf a_{*,2})\Bigr]
  +\varepsilon
  \leq\eta_0,
\end{equation}
then, for every $t\in\bR$,
\begin{equation}\label{eq:two-center-confinement}
  \Cset(\zeta_{1,\varepsilon,t})
  \times\Cset(\zeta_{2,\varepsilon,t})
  \subset B_\rho(\mathbf a_{*,1})\times
  B_\rho(\mathbf a_{*,2}),
\end{equation}
and
\begin{align}
  &\sum_{i=1}^2\D(\zeta_{i,\varepsilon,t})
  +\sup_{(\mathbf b_1,\mathbf b_2)\in
  \Cset(\zeta_{1,\varepsilon,t})\times
  \Cset(\zeta_{2,\varepsilon,t})}
  \Bigl[
  W_{\boldsymbol\kappa}^{(2)}(\mathbf b_1,\mathbf b_2)
  -W_{\boldsymbol\kappa}^{(2)}
  (\mathbf a_{*,1},\mathbf a_{*,2})
  \Bigr] \leq C\eta^{(2)}_{\varepsilon,0}.
  \label{eq:two-quantitative}
\end{align}
\end{theorem}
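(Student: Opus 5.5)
We follow the scheme outlined for the one-vortex case and adapt it to the pair. The Lyapunov quantity is the total kinetic energy $\Ecal(\omega_{\varepsilon,t})$. It is conserved, and it splits as
\[
  \Ecal(\omega)=\Pcal(\zeta_1)+\Pcal(\zeta_2)-\Rcal(\zeta_1)-\Rcal(\zeta_2)-\mathcal I(\zeta_1,\zeta_2)+\text{(cross $\ln$ term absorbed in $\mathcal I$)},
\]
where we write $\omega=\zeta_1-\zeta_2$ and use $G=\frac1{2\pi}\ln\frac1{|\cdot|}-h$. Because the flow map preserves measure, each $\zeta_{i,\varepsilon,t}$ is equimeasurable with $\zeta_{i,\varepsilon,0}$. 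Hence $\zeta_{i,\varepsilon,t}^*=\zeta_{i,\varepsilon,0}^*$ and $\Pcal(\zeta_{i,\varepsilon,t}^*)$ is constant in time. Conservation of energy then yields the exact identity
\[
  \sum_i\D(\zeta_{i,\varepsilon,t})+\Bigl[\sum_i\Rcal(\zeta_{i,\varepsilon,t})+\mathcal I(\zeta_{1,\varepsilon,t},\zeta_{2,\varepsilon,t})\Bigr]
  =\sum_i\D(\zeta_{i,\varepsilon,0})+\Bigl[\sum_i\Rcal(\zeta_{i,\varepsilon,0})+\mathcal I(\zeta_{1,\varepsilon,0},\zeta_{2,\varepsilon,0})\Bigr].
\]
We will check the sign convention: with $\omega=\zeta_1-\zeta_2$, the cross energy is $-\mathcal I(\zeta_1,\zeta_2)$. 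The bracket is therefore the ``regular'' energy, whose point-vortex limit is $W^{(2)}_{\boldsymbol\kappa}(\mathbf b_1,\mathbf b_2)=\kappa_1^2H(\mathbf b_1)+\kappa_2^2H(\mathbf b_2)-\kappa_1\kappa_2G(\mathbf b_1,\mathbf b_2)$. Since $-\kappa_1\kappa_2=|\kappa_1||\kappa_2|$, the bracket equals $\iint G\zeta_1\zeta_2$ plus the Robin terms.

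\textbf{Fixed-time estimates.} We need two-vortex versions of the estimates from \cref{sec:quantitative-rearrangement,sec:estimates_tail,sec:robin}. Take any $(\mathbf b_1,\mathbf b_2)\in\Cset(\zeta_1)\times\Cset(\zeta_2)$ with both $\mathbf b_i$ in $B_\rho(\mathbf a_{*,i})$, so that they are separated by a fixed distance and lie away from $\partial D$. Then we aim for the lower bound
\[
  \sum_i\Rcal(\zeta_i)+\mathcal I(\zeta_1,\zeta_2)\ \ge\ W^{(2)}_{\boldsymbol\kappa}(\mathbf b_1,\mathbf b_2)-C\bigl(\varepsilon+\textstyle\sum_i\A(\zeta_i)\cdot(\text{small})+\text{tail}\bigr).
\]
For the upper bound at $t=0$, we use the matching estimate $\le W^{(2)}_{\boldsymbol\kappa}(\mathbf a_{1,\varepsilon},\mathbf a_{2,\varepsilon})+C\varepsilon$, which holds because the initial supports are in $B_{R_i\varepsilon}(\mathbf a_{i,\varepsilon})$.

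The self terms $\Rcal(\zeta_i)$ are handled exactly as in the one-vortex case: $h$ is smooth near the diagonal, the mass outside $B_{R_i\varepsilon}(\mathbf b_i)$ is at most $\frac12\A(\zeta_i)$ (\cref{lem:centers}), and $h$ is bounded below on $D\times D$ up to the boundary behaviour, which the one-vortex tail argument already controls. The cross term is the new point. Here we use positivity: $G>0$ and $\zeta_i\ge0$, so dropping the parts of $\zeta_i$ outside the small balls only decreases $\mathcal I$. On the balls, $G$ is Lipschitz because the balls are separated. This gives a one-sided lower bound with error $C\varepsilon+C\sum_i\A(\zeta_i)$. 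For the Robin terms, $H$ is not bounded above near the boundary, so the sign there is not free; this is the analogue of the logarithmic tail estimate. We bound the escaped mass's contribution to $\Rcal$ using $h(\mathbf x,\mathbf y)\ge -C$ together with the $L^\infty$ bound $M_i\varepsilon^{-2}$.

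Next we convert the error $\A$ into $\D$. By \cref{lem:shape-coercivity}, $\A(\zeta_i)^2\le C\D(\zeta_i)$, so $\A\le C\D^{1/2}$. This is too weak to absorb linearly, so the errors must be organized so that $\A$ enters only multiplied by $o(1)$ (for instance by the oscillation of $h$ or $G$ over an $O(\varepsilon)$ region, plus a tail that is at most $C\varepsilon$ times a logarithmic factor). Then Young's inequality absorbs $\A$ into $\frac12\D$ plus $C\varepsilon$. Combining with the identity yields the conditional estimate: as long as $\Cset(\zeta_{1,\varepsilon,t})\times\Cset(\zeta_{2,\varepsilon,t})\subset B_\rho(\mathbf a_{*,1})\times B_\rho(\mathbf a_{*,2})$,
\[
  \tfrac12\sum_i\D(\zeta_{i,\varepsilon,t})+\sup_{\Cset\times\Cset}\bigl[W^{(2)}_{\boldsymbol\kappa}(\mathbf b_1,\mathbf b_2)-W^{(2)}_{\boldsymbol\kappa}(\mathbf a_{*,1},\mathbf a_{*,2})\bigr]\le C\eta^{(2)}_{\varepsilon,0}.
\]

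\textbf{Globalization.} This is the step I expect to be the main obstacle. The conditional estimate must be propagated through the set-valued first-exit lemma of the appendix. Centers are not unique and may jump, so instead of continuity of the centers we use upper semicontinuity of $t\mapsto\Cset(\zeta_{i,\varepsilon,t})$, which follows from $L^1$-continuity of $t\mapsto\zeta_{i,\varepsilon,t}$ and compactness of the centers. The conditional bound makes $W^{(2)}_{\boldsymbol\kappa}-\min\le C\eta_0$. By \eqref{eq:two-strict-well} and compactness, this forces every center into a strictly smaller polydisk $B_{\rho/2}(\mathbf a_{*,1})\times B_{\rho/2}(\mathbf a_{*,2})$ once $\eta_0$ is small. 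There remains a gap between the two polydisks, and no center can cross it without violating semicontinuity. So the first exit time from the open polydisk cannot exist. At $t=0$, the initial centers lie within $O(\varepsilon+\A)$ of $\mathbf a_{i,\varepsilon}$, hence inside the polydisk; to make this hold we choose $\mathbf a_{i,\varepsilon}$ close via the smallness of the $W$-gap, again using \eqref{eq:two-strict-well}. This gives \eqref{eq:two-center-confinement} and \eqref{eq:two-quantitative} for all $t\in\bR$, with backward time handled by reversibility.
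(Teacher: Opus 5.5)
\textbf{The fixed-time lower bound does not close.} Your Lyapunov identity and overall architecture match the paper: componentwise equimeasurability, positivity of $G$ to drop tail interactions, an $O(\varepsilon)$ initial upper bound, and a set-valued first exit. The gap is in how you control the escaped mass. You bound it by $\int_{D\setminus B_{R_i\varepsilon}(\mathbf b_i)}\zeta_i\le\frac12\A(\zeta_i)$, but that mass enters both $\Rcal(\zeta_i)$ and $\mathcal I(\zeta_1,\zeta_2)$ with an $O(1)$ coefficient, not an $o(1)$ one.
\begin{itemize}
\item In $\Rcal(\zeta_i)$: the escaped mass can sit at distance $O(1)$ from $\mathbf b_i$. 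There $h(\mathbf x,\mathbf y)-2H(\mathbf b_i)$ is only bounded below by a fixed negative constant. So the difficulty is the amount of escaped mass, not the boundary behaviour of $H$.
\item In the cross term: you lose roughly $(m_1|\kappa_2|+m_2|\kappa_1|)G(\mathbf b_1,\mathbf b_2)$.
\end{itemize}
The error is therefore $C\sum_i\A(\zeta_i)\le C\sum_i\D(\zeta_i)^{1/2}$ with $C$ fixed. Young's inequality then gives only $C\D^{1/2}\le\frac12\D+\frac{C^2}{2}$, a constant remainder rather than $O(\varepsilon)$. The resulting inequality $\D\le C\eta^{(2)}_{\varepsilon,0}+C\D^{1/2}$ cannot keep $\D$ below a threshold $\delta$ unless $C\delta^{1/2}<\delta$, i.e. $\delta>C^2$. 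That contradicts the smallness of $\delta$ you need elsewhere. Neither the oscillation of $h,G$ over $O(\varepsilon)$ balls nor the $L^\infty$ bound repairs this, and nothing makes the tail mass $O(\varepsilon)$.

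\textbf{The missing ingredient is the logarithmic tail estimate (\cref{prop:tail}).} For every optimal center $\mathbf b$ and every $L\ge L_{\rm t}$, the mass outside $B_{L\varepsilon}(\mathbf b)$ is at most $C_{\rm t}\D(f)/\ln L$, which is linear in $\D$. It comes from the layer-cake identity \eqref{eq:layer-cake}. For $r\in[2R\varepsilon,(L-R)\varepsilon)$, the core--tail pairs give $J_r(f^*)-J_r(f)\ge\frac{3\kappa}{2}\int_{\bR^2\setminus B_{L\varepsilon}(\mathbf b)}f$. Integrating against $dr/r$ over that range produces the factor $\ln\frac{L-R}{2R}$. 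With cores of radius $L\varepsilon$, the Lipschitz error is $O(L\varepsilon)$ and the escaped mass costs $O(\D/\ln L)$. Fixing $L$ large makes the coefficient of $\sum_i\D(\zeta_i)$ at most $\frac12$; this is the content of \cref{prop:robin-lower} and \cref{lem:two-cross-lower}.

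\textbf{A secondary gap in the globalization.} Upper semicontinuity alone does not stop a center from crossing the gap. For example, the map equal to $\{p\}$ before $T$, $\{p,q\}$ at $T$, and $\{q\}$ after $T$ is upper semicontinuous with compact values. At $T$ your conditional estimate does not apply, because $S(T)\not\subset\overline{V_\rho}$. The first-exit lemma therefore also needs the diameter bound $\operatorname{diam}\Cset(\zeta_{i,\varepsilon,t})\le 2R_i\varepsilon$ from \cref{lem:centers}(iii). This bound is available because a threshold $d_\varepsilon(t)\le\delta$ keeps $\A(\zeta_{i,\varepsilon,t})<|\kappa_i|$. Accordingly, your conditional estimate must carry the hypothesis $d_\varepsilon(t)\le\delta$ as well as the center confinement.
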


The corresponding consequence for two centered radial vortices is as follows.

\begin{corollary}[Two centered radial initial vortices]
Under the assumptions of \cref{thm:two-vortex}, suppose in addition that, for
$i=1,2$,
\[
  \mathbf a_{i,\varepsilon}=\mathbf a_{*,i},
  \qquad
  \zeta_{i,\varepsilon,0}(\mathbf x)
  =\zeta_{i,\varepsilon,0}^*
  (\mathbf x-\mathbf a_{*,i}).
\]
Then, for all sufficiently small $\varepsilon>0$,
\begin{equation*}
  \sup_{t\in\bR}\left\{
  \sum_{i=1}^2\D(\zeta_{i,\varepsilon,t})
  +\sup_{(\mathbf b_1,\mathbf b_2)\in
  \Cset(\zeta_{1,\varepsilon,t})\times
  \Cset(\zeta_{2,\varepsilon,t})}
  \Bigl[W_{\boldsymbol\kappa}^{(2)}(\mathbf b_1,\mathbf b_2)
  -W_{\boldsymbol\kappa}^{(2)}
  (\mathbf a_{*,1},\mathbf a_{*,2})\Bigr]\right\}
  \leq C\varepsilon,
\end{equation*}
and
\begin{equation*}
  \sup_{t\in\bR}\sum_{i=1}^2
  \A(\zeta_{i,\varepsilon,t})
  \leq C\varepsilon^{1/2}.
\end{equation*}
If the Hessian of $W_{\boldsymbol\kappa}^{(2)}$ at
$(\mathbf a_{*,1},\mathbf a_{*,2})$ is positive definite, then, after
possibly decreasing $\rho$,
\begin{equation*}
  \sup_{t\in\bR}\ \sup_{(\mathbf b_1,\mathbf b_2)\in
  \Cset(\zeta_{1,\varepsilon,t})\times
  \Cset(\zeta_{2,\varepsilon,t})}
  |(\mathbf b_1,\mathbf b_2)
  -(\mathbf a_{*,1},\mathbf a_{*,2})|
  \leq C\varepsilon^{1/2}.
\end{equation*}
Here $C>0$ depends only on
$D,\rho$, and $\mathbf a_{*,i},\kappa_i,M_i,R_i$, $i=1,2$.
\end{corollary}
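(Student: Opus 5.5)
The two-vortex corollary is a direct consequence of \cref{thm:two-vortex}, in the same way that \cref{coro_crdata} follows from \cref{thm:main}. I first compute $\eta^{(2)}_{\varepsilon,0}$ for the centered radial data. Since $\zeta_{i,\varepsilon,0}$ is a translate of its own symmetric decreasing rearrangement, and $\Pcal$ is translation invariant, we get $\Pcal(\zeta_{i,\varepsilon,0})=\Pcal(\zeta_{i,\varepsilon,0}^*)$. Hence $\D(\zeta_{i,\varepsilon,0})=0$. Because $\mathbf a_{i,\varepsilon}=\mathbf a_{*,i}$, the Kirchhoff--Routh gap also vanishes. Therefore $\eta^{(2)}_{\varepsilon,0}=\varepsilon$. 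For $\varepsilon<\min\{\varepsilon_0,\eta_0\}$ the smallness hypothesis \eqref{eq:two-quantitative-smallness} holds. The data also satisfy the remaining hypotheses, since $\mathbf a_{*,i}\in B_\rho(\mathbf a_{*,i})$. Then \eqref{eq:two-quantitative} gives the first displayed estimate with right-hand side $C\varepsilon$, uniformly in $t\in\bR$.

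For the shape estimate, equimeasurability shows that each transported component $\zeta_{i,\varepsilon,t}$ is equimeasurable with $\zeta_{i,\varepsilon,0}$. So it keeps circulation $|\kappa_i|$ and sup bound $M_i\varepsilon^{-2}$, and its rearrangement $\zeta_{i,\varepsilon,t}^*=\zeta_{i,\varepsilon,0}^*$ is supported in $B_{R_i\varepsilon}(0)$. This is the setting of the scale-uniform coercivity \cref{lem:shape-coercivity} (used already in the proof of \cref{coro_crdata}). That lemma gives $\A(\zeta_{i,\varepsilon,t})^2\le C\D(\zeta_{i,\varepsilon,t})\le C\varepsilon$. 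The sum bound $\sum_i\A(\zeta_{i,\varepsilon,t})\le C\varepsilon^{1/2}$ follows. The only thing to check here is that the lemma's constant depends only on $(|\kappa_i|,M_i,R_i)$ through the rearrangement. It does not depend on the full support of $\zeta_{i,\varepsilon,t}$, which may have spread out along filaments. This is exactly what "equimeasurability and \cref{lem:shape-coercivity}" provided in the one-vortex case.

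For the center estimate, I assume the Hessian of $W^{(2)}_{\boldsymbol\kappa}$ at $(\mathbf a_{*,1},\mathbf a_{*,2})$ is positive definite. The gradient vanishes there because the point is a local minimum. Taylor expansion then gives, after shrinking $\rho$,
\[
W^{(2)}_{\boldsymbol\kappa}(\mathbf b_1,\mathbf b_2)-W^{(2)}_{\boldsymbol\kappa}(\mathbf a_{*,1},\mathbf a_{*,2})\ge c\,|(\mathbf b_1,\mathbf b_2)-(\mathbf a_{*,1},\mathbf a_{*,2})|^2
\]
on $B_\rho(\mathbf a_{*,1})\times B_\rho(\mathbf a_{*,2})$. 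Shrinking $\rho$ preserves the hypotheses of \cref{thm:two-vortex}: the compact inclusion and the strict-well condition \eqref{eq:two-strict-well} both persist. Only the constants change. The confinement \eqref{eq:two-center-confinement} places every pair of optimal centers in this product of balls. Combining this with the $C\varepsilon$ bound on the gap gives $|(\mathbf b_1,\mathbf b_2)-(\mathbf a_{*,1},\mathbf a_{*,2})|\le C\varepsilon^{1/2}$. There is no real obstacle. The one point needing care is to fix the decreased $\rho$ before invoking \cref{thm:two-vortex}, so that $\varepsilon_0,\eta_0,C$ are taken for that $\rho$.
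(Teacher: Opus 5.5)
Your proposal is correct and follows essentially the same route as the paper. The centered radial data give $\eta^{(2)}_{\varepsilon,0}=\varepsilon$, and the three conclusions then follow from \eqref{eq:two-quantitative}, componentwise coercivity (\cref{lem:shape-coercivity} with $|\kappa_i|,M_i,R_i$), and the quadratic lower bound for $W_{\boldsymbol\kappa}^{(2)}$ combined with \eqref{eq:two-center-confinement}. Your point that the decreased $\rho$ must be fixed before invoking \cref{thm:two-vortex} is correct, and the paper leaves it implicit.
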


\begin{proof}
For centered radial data,
$\eta^{(2)}_{\varepsilon,0}=\varepsilon$.  The conclusions follow from
\eqref{eq:two-quantitative}, componentwise coercivity, and, when the Hessian
is positive definite, the quadratic lower bound for
$W_{\boldsymbol\kappa}^{(2)}$ together with
\eqref{eq:two-center-confinement}, exactly as in the proof of
\cref{coro_crdata}.
\end{proof}

\subsection{Proof strategy}

We first describe a four-step strategy for proving the one-vortex result.  The first three parts establish a conditional
estimate at a fixed time, and the fourth propagates it globally in time.  The
two-vortex proof follows the same scheme, with the additional interaction
term described below.

\begin{enumerate}[label=\textup{Step~\arabic*.}]
\item \emph{Scale-uniform estimates relating $\mathcal D$ and $\mathcal A$.}
We first prove that
\[
  \D(\omega_{\varepsilon,t})
  \gtrsim \A(\omega_{\varepsilon,t})^2,
\]
which shows that the logarithmic deficit controls the squared $L^1$ distance
to the family of translates of the radial rearrangement.  In the other direction,
initial localization gives
$\D(\omega_{\varepsilon,0})\lesssim\A(\omega_{\varepsilon,0})$, which connects
the qualitative initial assumptions with the quantitative theorem.

\item \emph{Logarithmic tail estimate.}
For every $\mathbf b\in\Cset(\omega_{\varepsilon,t})$ and every sufficiently
large $L$, we prove that
\[
  \int_{D\setminus B_{L\varepsilon}(\mathbf b)}
  \omega_{\varepsilon,t}
  \,\mathrm d\mathbf x
  \lesssim\frac{\D(\omega_{\varepsilon,t})}{\ln L}.
\]
Thus the support may form a long filament, but the circulation carried by the
filament remains small.

\item \emph{Regular energy estimates.}
Based on the logarithmic tail estimate, we prove that for every
$\mathbf b\in\Cset(\omega_{\varepsilon,t})$,
\[
  \Rcal(\omega_{\varepsilon,t})
  \geq\kappa^2H(\mathbf b)
  -\theta\D(\omega_{\varepsilon,t})-C\varepsilon,
\]
where $\theta>0$ can be chosen arbitrarily small and $C>0$ is independent of $\varepsilon$.
Combining this bound with equimeasurability and energy
conservation gives a \emph{conditional} estimate for the shape deficit and
the Robin-function gap, as long as the deficit is small and all optimal
centers remain in the trapping well.

\item \emph{Global propagation via a set-valued first-exit argument.}
An optimal center need not be unique, and a continuous choice of centers may
not exist.  We therefore follow the full compact set
$\Cset(\omega_{\varepsilon,t})$ and use its upper semicontinuity.
If, at a first exit time, either the deficit reached its threshold or an optimal center reached the boundary of the trapping well, the conditional estimate would keep the deficit below the threshold and all optimal centers strictly inside the well, yielding a contradiction. The set-valued first-exit argument therefore gives the
estimate for every time and every optimal center.
\end{enumerate}

For a pair of opposite-sign vortices, the additional term is the mutual interaction of
the two components.  Its sign is favorable in the Lyapunov quantity.
Positivity of the Dirichlet Green function allows us to discard tail
interactions in a lower bound, while smoothness away from the diagonal
controls the interaction of the two localized cores.
Together with the two componentwise regular-energy estimates, this yields the appropriate Kirchhoff--Routh gap for the opposite-sign pair.
The same set-valued first-exit argument then applies in $\bR^4$.

\section{Scale-uniform estimates relating \texorpdfstring{$\D$ and $\A$}{D and A}}
\label{sec:quantitative-rearrangement}

We begin with the following special case of the Yan--Yao inequality
\cite[Theorem~1.1]{YanYao2022}.

\begin{lemma}
\label{lem:yan-yao-log}
Let $f\in L^\infty(\bR^2)$ be nonzero, nonnegative, and compactly
supported. Suppose that, for some $R_*>0$,
\[
  \operatorname{supp}f^*\subset B_{R_*}(\mathbf0).
\]
Then
\begin{equation*}
  \D(f)
  \geq c_{\rm YY}R_*^{-2}\|f\|_1\|f\|_\infty^{-1}
  \A(f)^2
\end{equation*}
for some universal constant $c_{\rm YY}>0$.
\end{lemma}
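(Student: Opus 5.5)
This lemma is a special case of the Yan--Yao quantitative Riesz rearrangement inequality \cite[Theorem~1.1]{YanYao2022}. The plan is therefore to reduce to a normalized setting and then read off the logarithmic-kernel case. Yan and Yao prove that for the Newtonian (in 2D, logarithmic) interaction energy one has, for nonnegative $f$ with $\|f\|_\infty$ and $\|f\|_1$ fixed,
\[
  \Pcal(f^*)-\Pcal(f)\ \geq\ c\,\frac{\|f\|_1}{\|f\|_\infty\,R_*^{2}}\,\inf_{\mathbf b}\|f(\cdot+\mathbf b)-f^*\|_1^2 ,
\]
where $R_*$ controls the support of $f^*$. The task is mainly to check that their hypotheses and normalization match ours.

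\textbf{Step 1: scaling.} I would first verify that both sides of the claimed inequality transform identically under $f\mapsto \lambda f(\mu\,\cdot)$. Under the amplitude change $f\mapsto\lambda f$, $\D$ scales by $\lambda^2$. So does $\|f\|_1\|f\|_\infty^{-1}\A(f)^2$, since $\lambda\cdot\lambda^{-1}\cdot\lambda^2=\lambda^2$.

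For the dilation $f_\mu(\mathbf x)=f(\mu\mathbf x)$, the constant produced by $\ln(1/|\mathbf x-\mathbf y|)$ is multiplied by $(\int f_\mu)^2$, which is the same for $f_\mu$ and $f_\mu^*$. Hence it cancels in $\D$, and $\D(f_\mu)=\mu^{-4}\D(f)$. On the other side, $\A(f_\mu)=\mu^{-2}\A(f)$, $\|f_\mu\|_1=\mu^{-2}\|f\|_1$, and the support radius becomes $R_*/\mu$. The right-hand side therefore scales as $\mu^{2}\cdot\mu^{-2}\cdot\mu^{-4}=\mu^{-4}$.

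This allows the normalization $R_*=1$ and $\|f\|_\infty=1$. Then $|\operatorname{supp}f|=|\operatorname{supp}f^*|\le\pi$, so $\|f\|_1\le\pi$.

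\textbf{Step 2: apply Yan--Yao.} In this normalized setting I would invoke \cite[Theorem~1.1]{YanYao2022} with the two-dimensional logarithmic kernel. Their statement is phrased for densities $0\le f\le1$ whose rearrangement is supported in a fixed ball. It gives
\[
  \Pcal(f^*)-\Pcal(f)\ge c\,\|f\|_1\,\A(f)^2,
\]
and undoing the scaling of Step~1 yields the lemma with a universal $c_{\rm YY}$.

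\textbf{Main obstacle.} The delicate point is matching their precise form. Their theorem may be stated with the deficit measured in a different norm, or with a mass-dependent constant, or with a slightly different power. If their bound reads $\gtrsim \A(f)^2$ with a constant depending on the mass $\|f\|_1$, I would extract the linear mass factor by a further scaling argument together with the trivial bound $\A(f)\le 2\|f\|_1$. If instead the support hypothesis is phrased for $f$ rather than $f^*$, I would note that only $f^*$ enters the radius in their proof, via $|\operatorname{supp} f|$.
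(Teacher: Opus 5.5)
Your proposal takes the paper's route: the paper's proof is simply \cite[Theorem~1.1]{YanYao2022} applied with $n=2$, $k=0$, $W(\mathbf x)=-\ln|\mathbf x|$, which gives the stated inequality directly, so your Step~1 scaling reduction is correct but unnecessary. One caution: the fallback in your last paragraph would not work if it were ever needed, because after both scalings are used no scaling freedom remains and the bound $\A(f)\le2\|f\|_1$ can only weaken an estimate, never upgrade a mass-degenerate constant to a linear mass factor; since the cited theorem already has the required form, this does not affect your argument.
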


\begin{proof}
This follows from \cite[Theorem~1.1]{YanYao2022} by taking $n=2$, $k=0$,
and $W(\mathbf x)=-\ln|\mathbf x|$.
\end{proof}

\begin{remark}
The result of Yan and Yao applies to a broader class of interaction
potentials and does not require $f$ to be compactly supported.
\end{remark}

The next lemma follows directly from \cref{lem:yan-yao-log}.

\begin{lemma}[Control of asymmetry by the deficit]
\label{lem:shape-coercivity}
Fix $\kappa,M,R>0$. Then there exists
$c_0=c_0(\kappa,M,R)>0$ such that, whenever
$\varepsilon>0$ and $f\in L^\infty(\bR^2)$ is nonnegative, compactly
supported, and satisfies
\[
  \|f\|_1=\kappa,\qquad
  \|f\|_\infty\leq M\varepsilon^{-2},\qquad
  \operatorname{supp}f^*\subset B_{R\varepsilon}(\mathbf0),
\]
one has
\begin{equation}\label{eq:shape-coercivity}
  \D(f)\geq c_0\A(f)^2.
\end{equation}
\end{lemma}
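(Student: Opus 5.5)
This is a direct application of \cref{lem:yan-yao-log}, with $R_*=R\varepsilon$, followed by a check that the resulting constant does not depend on the scale $\varepsilon$.

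First, the hypotheses of \cref{lem:yan-yao-log} hold. The function $f$ is nonnegative, compactly supported and in $L^\infty(\bR^2)$. It is nonzero because $\|f\|_1=\kappa>0$. By assumption $\operatorname{supp}f^*\subset B_{R\varepsilon}(\mathbf0)$, so we may take $R_*=R\varepsilon$. The lemma then gives
\[
  \D(f)\geq c_{\rm YY}(R\varepsilon)^{-2}\|f\|_1\|f\|_\infty^{-1}\A(f)^2 .
\]

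Next, we bound the prefactor from below. We have $\|f\|_1=\kappa$, and $\|f\|_\infty^{-1}\geq M^{-1}\varepsilon^2$ because $0<\|f\|_\infty\leq M\varepsilon^{-2}$. Hence
\[
  c_{\rm YY}(R\varepsilon)^{-2}\|f\|_1\|f\|_\infty^{-1}
  \geq c_{\rm YY}R^{-2}\varepsilon^{-2}\,\kappa\,M^{-1}\varepsilon^{2}
  = \frac{c_{\rm YY}\kappa}{MR^2}.
\]
The factors $\varepsilon^{-2}$ and $\varepsilon^{2}$ cancel exactly. This cancellation is the whole content of the statement: \cref{lem:yan-yao-log} is scale-invariant, so concentrating on the scale $\varepsilon$ costs nothing.

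Setting $c_0:=c_{\rm YY}\kappa/(MR^2)$ gives \eqref{eq:shape-coercivity}, and $c_0$ depends only on $\kappa$, $M$ and $R$. No step is a real obstacle. The only points to confirm are that $\|f\|_\infty>0$, which follows from $f\neq0$, and that the support hypothesis is imposed on $f^*$, which is exactly what \cref{lem:yan-yao-log} requires.
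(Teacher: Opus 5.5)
Your proposal is correct and is essentially the paper's proof: apply \cref{lem:yan-yao-log} with $R_*=R\varepsilon$, observe that the factors $\varepsilon^{-2}$ and $\varepsilon^{2}$ cancel, and take $c_0=c_{\rm YY}\kappa/(MR^2)$. Your extra checks that $f\not\equiv0$ and $\|f\|_\infty>0$ are fine and are left implicit in the paper.
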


\begin{proof}
Just apply \cref{lem:yan-yao-log} with $R_*=R\varepsilon$ and take $c_0=c_{\rm YY}\kappa/(MR^2)$.
\end{proof}

Under an additional localization assumption, we also obtain a reverse
estimate.

\begin{lemma}[Control of the deficit by asymmetry under localized support]
\label{lem:localized-upper}
Fix $M,R>0$.  Then there exists
$C_{\rm loc}=C_{\rm loc}(M,R)>0$ such that, whenever
$\varepsilon>0$ and $f\in L^\infty(\bR^2)$ is nonnegative and satisfies
\[
  \|f\|_\infty\leq M\varepsilon^{-2},\qquad
  \operatorname{supp}f\subset B_{R\varepsilon}(\mathbf a)
\]
for some $\mathbf a\in\bR^2$,
one has
\begin{equation}\label{eq:localized-upper}
 \D(f)\leq C_{\rm loc}\A(f).
\end{equation}
\end{lemma}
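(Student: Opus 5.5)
The plan is to use translation invariance of $\Pcal$ and the Lipschitz-type bound for the logarithmic energy, localized at scale $\varepsilon$.

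\emph{Step 1: localize the optimal translates.} Pick $\mathbf b\in\Cset(f)$, which is nonempty by \cref{lem:centers}, and set $g:=f(\cdot+\mathbf b)$. Since $\Pcal$ is translation invariant,
\[
  \D(f)=\Pcal(f^*)-\Pcal(g),
  \qquad
  \|g-f^*\|_1=\A(f).
\]
We have $\operatorname{supp}f^*\subset B_{R\varepsilon}(\mathbf 0)$, because $|\operatorname{supp}f|\le\pi R^2\varepsilon^2$. We also need $g$ supported in a ball of radius $O(\varepsilon)$ around the origin. For this I will use the estimate recorded in \cref{rk140}, namely $\int_{\bR^2\setminus B_{R\varepsilon}(\mathbf b)}f\le \frac12\A(f)$.

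There are two cases. If $|\mathbf b-\mathbf a|>2R\varepsilon$, then the balls $B_{R\varepsilon}(\mathbf a)$ and $B_{R\varepsilon}(\mathbf b)$ are disjoint, so the estimate gives $\|f\|_1\le\frac12\A(f)$. But trivially $\A(f)\le 2\|f\|_1$ always, so this case can only occur in a degenerate way. In that regime I bound $\D(f)$ directly.

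\emph{Step 2: a direct bound on $\D$.} The scaling identity is
\[
  \Pcal(f)=\tfrac1{4\pi}\|f\|_1^2\ln\tfrac1\varepsilon+\Pcal\bigl(f(\varepsilon\,\cdot)\,\varepsilon^2\bigr).
\]
Since $f$ and $f^*$ have the same $L^1$ norm, the $\ln\varepsilon$ terms cancel in $\D(f)$. So I rescale $\tilde f(\mathbf y):=\varepsilon^2f(\varepsilon\mathbf y+\mathbf a)$, which satisfies $\|\tilde f\|_\infty\le M$ and $\operatorname{supp}\tilde f\subset B_R(\mathbf 0)$. Both $\D$ and $\A$ are scale invariant in this sense: $\A(\tilde f)=\A(f)$ and $\D(\tilde f)=\D(f)$. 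This reduces everything to $\varepsilon=1$.

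With $\varepsilon=1$, $\|\tilde f\|_1\le \pi R^2M$. For a kernel $K$ on $B_{4R}$, the difference $\Pcal(f^*)-\Pcal(g)$ is a bilinear expression in $f^*-g$ and $f^*+g$. By Hölder with $\ln|\cdot|\in L^1(B_{8R})$ and $L^\infty$ bounds, it is at most
\[
  \tfrac1{4\pi}\,\|f^*-g\|_1\,\sup_{\mathbf x\in B_{4R}}\int_{B_{4R}}\bigl|\ln|\mathbf x-\mathbf y|\bigr|\,(f^*+g)(\mathbf y)\,d\mathbf y\le C(M,R)\|f^*-g\|_1,
\]
using $\|f^*+g\|_\infty\le 2M$. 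This requires $g$ and $f^*$ to be supported in a fixed ball, say $B_{4R}(\mathbf 0)$, which holds when $|\mathbf b-\mathbf a|\le 3R$.

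\emph{Step 3: the far case.} If $|\mathbf b-\mathbf a|>3R$, then $g$ and $f^*$ have disjoint supports, so $\A(f)=2\|f\|_1$. Then I simply bound
\[
  \D(f)\le\Pcal(f^*)+|\Pcal(\tilde f)|\le C(M,R)\|f\|_1^2\le C(M,R)\|f\|_1=\tfrac{C}{2}\A(f),
\]
using $\|f\|_1\le\pi R^2M$ and the same log-kernel bound.

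I expect the main obstacle to be exactly this support bookkeeping, that is, ensuring the optimal translate stays localized. The case split avoids needing fine properties of $\Cset$. Alternatively, one can bypass optimality entirely: take any near-optimal $\mathbf b$ and split the kernel over $B_{4R}\times B_{4R}$, with the far part treated as in Step 3. Taking $C_{\rm loc}$ to be the maximum of the two constants completes the proof.
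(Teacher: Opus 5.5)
Your argument is essentially the paper's. You localize an optimal translate $f(\cdot+\mathbf b)$ within $O(\varepsilon)$ of $\mathbf a$. You then write $\D(f)=\Pcal(f^*)-\Pcal(f(\cdot+\mathbf b))$ as a bilinear form in $f^*-f(\cdot+\mathbf b)$, tested against logarithmic potentials of functions bounded by $M\varepsilon^{-2}$ on an $O(\varepsilon)$-ball. The only technical difference is that the paper avoids rescaling: it uses $\int q=0$ to replace the kernel by the nonnegative $\ln(2S\varepsilon/|\mathbf x-\mathbf y|)$ and obtains $C_{\rm loc}=9MR^2$.

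Two small corrections concern your far case. First, it is vacuous: \cref{lem:centers} gives the strict inequality $\A(f)<2\|f\|_1$ for $f\not\equiv0$, so disjoint supports cannot occur; this is exactly how the paper gets $|\mathbf b-\mathbf a|\leq2R\varepsilon$. Second, the intermediate bound $C(M,R)\|f\|_1^2$ you wrote there is false in general, as concentrating the mass in a tiny disk shows. The bound you actually need, $|\Pcal(h)|\leq C(M,R)\|h\|_1$, does follow from the same log-kernel estimate.
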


\begin{proof}
The conclusion is trivial if $f\equiv0$, so assume that $f$ is nonzero.
Since $f^*$ is radially decreasing and equimeasurable with $f$, the
localized support assumption on $f$ gives
\[
  |\{f^*>0\}|=|\{f>0\}|<\pi R^2\varepsilon^2,
\]
and hence
\begin{equation}\label{suppfstar}
  \operatorname{supp}f^*\subset B_{R\varepsilon}(\mathbf0).
\end{equation}
By \cref{lem:centers}, there exists an optimal center
$\mathbf b\in\Cset(f)$ such that
$|\mathbf a-\mathbf b|\leq2R\varepsilon$.
Set $\widetilde f:=f(\cdot+\mathbf b)$ and $S:=3R$.  Then
\begin{equation}\label{supptdf}
  \operatorname{supp}\widetilde f
  \subset B_{R\varepsilon}(\mathbf a-\mathbf b)
  \subset B_{S\varepsilon}(\mathbf0),
  \end{equation}
and
\begin{equation}\label{linftytdf}
  \|\widetilde f\|_\infty=\|f^*\|_\infty
  =\|f\|_\infty\leq M\varepsilon^{-2}.
  \end{equation}
Set $q:=f^*-\widetilde f$.  Then $q$ satisfies
\begin{equation}\label{ql1a}
\int_{\bR^2}q(\mathbf x)\,\mathrm d\mathbf x=0,\qquad \operatorname{supp}q  \subset B_{S\varepsilon}(\mathbf0),\qquad
  \|q\|_1=\A(f).
\end{equation}
Moreover, we have the following identity:
\begin{equation}\label{ql1abc}
  f^*(\mathbf x)f^*(\mathbf y)
  -\widetilde f(\mathbf x)\widetilde f(\mathbf y)
  =q(\mathbf x)f^*(\mathbf y)
  +\widetilde f(\mathbf x)q(\mathbf y).
\end{equation}
Using \eqref{suppfstar}, \eqref{supptdf}, \eqref{ql1a}, and
\eqref{ql1abc}, we obtain
\begin{align*}
\D(f)&=\Pcal(f^*)-\Pcal(f)\\
&=\Pcal(f^*)-\Pcal(\widetilde f)\\
&=\frac{1}{4\pi}\iint_{B_{S\varepsilon}(\mathbf0)\times B_{S\varepsilon}(\mathbf0)}
  \ln\frac{1}{|\mathbf x-\mathbf y|}
  \bigl[q(\mathbf x)f^*(\mathbf y)
  +\widetilde f(\mathbf x)q(\mathbf y)\bigr]
  \,\mathrm d\mathbf x\,\mathrm d\mathbf y\\
&=\frac{1}{4\pi}\iint_{B_{S\varepsilon}(\mathbf0)\times B_{S\varepsilon}(\mathbf0)}
  \ln\frac{2S\varepsilon}{|\mathbf x-\mathbf y|}
  \bigl[q(\mathbf x)f^*(\mathbf y)
  +\widetilde f(\mathbf x)q(\mathbf y)\bigr]
  \,\mathrm d\mathbf x\,\mathrm d\mathbf y\\
  &\leq \frac{\A(f)}{4\pi}\left( \sup_{\mathbf x\in B_{S\varepsilon}(\mathbf0)} \int_{B_{S\varepsilon}(\mathbf0)}
  \ln\frac{2S\varepsilon}{|\mathbf x-\mathbf y|}f^*(\mathbf y)
  \,\mathrm d\mathbf y + \sup_{\mathbf y\in B_{S\varepsilon}(\mathbf0)} \int_{B_{S\varepsilon}(\mathbf0)}
  \ln\frac{2S\varepsilon}{|\mathbf x-\mathbf y|}\widetilde f(\mathbf x) \,\mathrm d\mathbf x \right).
\end{align*}
The fourth equality follows from the fact that $q$ has zero integral.
For the last inequality, we use $\|q\|_1=\A(f)$ and
\[
  \ln\frac{2S\varepsilon}{|\mathbf x-\mathbf y|}\geq0
  \qquad
  (\mathbf x,\mathbf y\in B_{S\varepsilon}(\mathbf0)).
\]
It remains to show that the two logarithmic potentials in the resulting
expression are uniformly bounded in $\varepsilon$.  For every
$\mathbf x\in B_{S\varepsilon}(\mathbf0)$, the first potential satisfies
\begin{equation*}
\begin{split}
 \int_{B_{S\varepsilon}(\mathbf0)}
  \ln\frac{2S\varepsilon}{|\mathbf x-\mathbf y|}f^*(\mathbf y)
  \,\mathrm d\mathbf y
  \leq &  M\varepsilon^{-2}\int_{B_{S\varepsilon}(\mathbf0)}
  \ln\frac{2S\varepsilon}{|\mathbf x-\mathbf y|}
  \,\mathrm d\mathbf y \\
  \leq & M\varepsilon^{-2}\int_{B_{2S\varepsilon}(\mathbf0)}
  \ln\frac{2S\varepsilon}{|\mathbf y|}
  \,\mathrm d\mathbf y \\
  =&M\varepsilon^{-2}(2S\varepsilon)^2 \int_{B_{1}(\mathbf0)}
  \ln\frac{1}{|\mathbf z|}
  \,\mathrm d\mathbf z \\
  =&2\pi MS^2.
  \end{split}
  \end{equation*}
Here the first inequality uses \eqref{linftytdf}, while the last equality
uses
\[
  \int_{B_{1}(\mathbf0)}
  \ln\frac{1}{|\mathbf z|}
  \,\mathrm d\mathbf z=\frac{\pi}{2}.
\]
Analogously, for every $\mathbf y\in B_{S\varepsilon}(\mathbf0)$,
\begin{equation*}
\int_{B_{S\varepsilon}(\mathbf0)}
  \ln\frac{2S\varepsilon}{|\mathbf x-\mathbf y|}\widetilde f(\mathbf x)
  \,\mathrm d\mathbf x \leq2\pi MS^2.
  \end{equation*}
The proof is therefore complete with $C_{\rm loc}=MS^2=9MR^2$.
\end{proof}

The preceding two lemmas also yield a global persistence result for a single nearly radial
concentrated vortex in the whole plane. For completeness, we state the result precisely
and give a detailed proof below.

\begin{theorem}[Global persistence of a single nearly radial concentrated vortex in $\bR^2$]
\label{thm:whole-plane-persistence}
Fix $\kappa,M,R>0$.  Then there exists $C=C(\kappa,M,R)>0$ such that the following
holds.  For $\varepsilon>0$, let
$\omega_{\varepsilon,0}\in L^\infty(\bR^2)$ satisfy
\begin{equation}\label{eq:whole-plane-initial-data}
  \omega_{\varepsilon,0}\geq0,\qquad
  \int_{\bR^2}\omega_{\varepsilon,0}\,\mathrm d\mathbf x=\kappa,
  \qquad
  \|\omega_{\varepsilon,0}\|_\infty\leq M\varepsilon^{-2},
  \qquad
  \operatorname{supp}\omega_{\varepsilon,0}
  \subset B_{R\varepsilon}(\mathbf0).
\end{equation}
Let $\omega_{\varepsilon,t}$, $t\in\bR$, be the corresponding Yudovich
solution of the two-dimensional Euler equation in $\bR^2$.  Then
\begin{equation}\label{eq:whole-plane-shape-persistence}
  \sup_{t\in\bR}\A(\omega_{\varepsilon,t})^2
  \leq C\A(\omega_{\varepsilon,0}).
\end{equation}
If, in addition,
\begin{equation}\label{eq:whole-plane-smallness}
  \A(\omega_{\varepsilon,0})\leq\frac{\kappa^2}{C},
\end{equation}
then, with
\[
  \mathbf X_\varepsilon
  :=\frac1\kappa\int_{\bR^2}\mathbf x
  \omega_{\varepsilon,0}(\mathbf x)\,\mathrm d\mathbf x,
\]
one has
\begin{equation}\label{eq:whole-plane-center-persistence}
  \sup_{t\in\bR}\ \sup_{\mathbf b\in\Cset(\omega_{\varepsilon,t})}
  |\mathbf b-\mathbf X_\varepsilon|
  \leq (1+\sqrt2)R\varepsilon.
\end{equation}
Consequently,
\begin{equation}\label{eq:whole-plane-center-origin}
  \sup_{t\in\bR}\ \sup_{\mathbf b\in\Cset(\omega_{\varepsilon,t})}
  |\mathbf b|
  \leq(2+\sqrt2)R\varepsilon.
\end{equation}
In particular, for any family of initial data satisfying
\eqref{eq:whole-plane-initial-data} with the same $\kappa,M,R$, if
$\A(\omega_{\varepsilon,0})\to0$ as $\varepsilon\to0$, then
\[
  \sup_{t\in\bR}\A(\omega_{\varepsilon,t})\longrightarrow0,
  \qquad
  \sup_{t\in\bR}\ \sup_{\mathbf b\in
  \Cset(\omega_{\varepsilon,t})}|\mathbf b|\longrightarrow0.
\]
\end{theorem}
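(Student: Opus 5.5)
In the whole plane the kinetic energy is exactly $\Pcal(\omega_t)$, so it is conserved. Equimeasurability gives $\omega_t^*=\omega_0^*$. Hence
\[
  \D(\omega_{\varepsilon,t})=\Pcal(\omega_{\varepsilon,0}^*)-\Pcal(\omega_{\varepsilon,t})=\D(\omega_{\varepsilon,0})
\]
for every $t$. This is a genuine conservation law; no first-exit argument is needed here. I would take this as the starting identity. It requires conservation of the logarithmic energy for Yudovich solutions with compactly supported data in $\bR^2$, which I take as standard (the support stays bounded on finite time intervals, and the velocity decays).

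\emph{Shape estimate.} First, $\operatorname{supp}\omega_{\varepsilon,t}^*=\operatorname{supp}\omega_{\varepsilon,0}^*\subset B_{R\varepsilon}(\mathbf0)$, because $|\{\omega_0>0\}|\le\pi R^2\varepsilon^2$, exactly as in \eqref{suppfstar}. The norms $\|\omega_t\|_1=\kappa$ and $\|\omega_t\|_\infty\le M\varepsilon^{-2}$ are also conserved. So \cref{lem:shape-coercivity} applies at every time $t$, and \cref{lem:localized-upper} applies at $t=0$. Together they give
\[
  c_0\A(\omega_t)^2\le\D(\omega_t)=\D(\omega_0)\le C_{\rm loc}\A(\omega_0),
\]
which is \eqref{eq:whole-plane-shape-persistence} with $C\ge C_{\rm loc}/c_0$.

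\emph{Center estimate.} Here I would use conservation of the center of vorticity, $\int\mathbf x\,\omega_t=\kappa\mathbf X_\varepsilon$, together with the second moment $\int|\mathbf x-\mathbf X_\varepsilon|^2\omega_t=\int|\mathbf x-\mathbf X_\varepsilon|^2\omega_0\le\kappa(R\varepsilon+|\mathbf X_\varepsilon|)^2$. Since $|\mathbf X_\varepsilon|\le R\varepsilon$, the last quantity is at most $4\kappa R^2\varepsilon^2$. A tighter bound follows from the identity $\int|\mathbf x-\mathbf X|^2\omega_0=\int|\mathbf x|^2\omega_0-\kappa|\mathbf X|^2\le\kappa(R^2\varepsilon^2-|\mathbf X|^2)$, so $I:=\int|\mathbf x-\mathbf X_\varepsilon|^2\omega_t\le\kappa R^2\varepsilon^2$.

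Now fix $\mathbf b\in\Cset(\omega_t)$ and set $d=|\mathbf b-\mathbf X_\varepsilon|$. By \cref{lem:centers} (see \cref{rk140}), the mass of $\omega_t$ outside $B_{R\varepsilon}(\mathbf b)$ is at most $\tfrac12\A(\omega_t)$, so at least $\kappa-\tfrac12\A(\omega_t)$ lies in $B_{R\varepsilon}(\mathbf b)$. Suppose $d>(1+\sqrt2)R\varepsilon$. On that ball, $|\mathbf x-\mathbf X_\varepsilon|\ge d-R\varepsilon>\sqrt2R\varepsilon$, and therefore
\[
  I\ge\bigl(\kappa-\tfrac12\A(\omega_t)\bigr)\,2R^2\varepsilon^2 .
\]
This exceeds $\kappa R^2\varepsilon^2$ as soon as $\A(\omega_t)<\kappa$, a contradiction.

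That smallness holds once $\A(\omega_t)^2\le C\A(\omega_0)\le\kappa^2$, which is exactly hypothesis \eqref{eq:whole-plane-smallness}; equality is avoided by enlarging $C$ slightly or by using strict inequalities. This gives \eqref{eq:whole-plane-center-persistence}. Adding $|\mathbf X_\varepsilon|\le R\varepsilon$ gives \eqref{eq:whole-plane-center-origin}. The limit statements then follow immediately, since eventually $\A(\omega_0)\le\kappa^2/C$.

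\emph{Main obstacle.} The step needing the most care is the center bound: matching the exact constant $1+\sqrt2$ through the second-moment identity and the $\tfrac12\A$ tail bound. The rigorous justification that the logarithmic energy and the moments are conserved for Yudovich solutions in $\bR^2$ is standard and I would simply cite it.
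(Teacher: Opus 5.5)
Your proposal is correct and matches the paper's proof step for step. Conservation of $\Pcal$ and equimeasurability give $\D(\omega_{\varepsilon,t})=\D(\omega_{\varepsilon,0})$; sandwiching with \cref{lem:shape-coercivity} and \cref{lem:localized-upper} then gives the shape bound. Conservation of the center of vorticity and the second moment, combined with the tail bound \eqref{eq:outside-optimal-ball}, gives the $(1+\sqrt2)R\varepsilon$ center estimate. The paper argues this last step directly, as $d\le R\varepsilon+\sqrt{2I_\varepsilon(0)/\kappa}$ using mass at least $\kappa/2$ in the ball, rather than by contradiction. One small wording point: for $\kappa\neq0$ the whole-plane kinetic energy $\frac12\int|\mathbf u|^2$ is infinite, so the conserved quantity you actually use is the renormalized energy $\Pcal$.
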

\begin{proof}
Since $\omega_{\varepsilon,t}$ is transported by an area-preserving
flow, it remains compactly supported and equimeasurable with
$\omega_{\varepsilon,0}$. Hence
\[
  \omega_{\varepsilon,t}^*
  =\omega_{\varepsilon,0}^*
  \qquad\text{for every }t\in\bR.
\]
In particular,
\[
  \int_{\bR^2}\omega_{\varepsilon,t}\,\mathrm d\mathbf x=\kappa,
  \qquad
  \|\omega_{\varepsilon,t}\|_\infty
  =\|\omega_{\varepsilon,0}\|_\infty
  \leq M\varepsilon^{-2},
  \qquad
  \operatorname{supp}\omega_{\varepsilon,t}^*
  \subset B_{R\varepsilon}(\mathbf0).
\]
Moreover, the logarithmic self-interaction energy $\Pcal$ is
conserved, which implies
\[
  \D(\omega_{\varepsilon,t})
  =\D(\omega_{\varepsilon,0})
  \qquad(t\in\bR).
\]

Let $c_0=c_0(\kappa,M,R)$ and
$C_{\mathrm{loc}}=C_{\mathrm{loc}}(M,R)$ be the constants in
\cref{lem:shape-coercivity,lem:localized-upper}, respectively.
For every $t\in\bR$, applying the former lemma to
$\omega_{\varepsilon,t}$ and the latter to
$\omega_{\varepsilon,0}$ yields
\[
  c_0\A(\omega_{\varepsilon,t})^2
  \leq\D(\omega_{\varepsilon,t})
  =\D(\omega_{\varepsilon,0})
  \leq C_{\mathrm{loc}}\A(\omega_{\varepsilon,0}).
\]
This proves \eqref{eq:whole-plane-shape-persistence} with
$C=C_{\mathrm{loc}}/c_0$.

It is also well known that, for compactly supported Yudovich
solutions of the Euler equation in $\bR^2$, the mass center and the
second moment about the origin are conserved, that is, for any $t\in\mathbb R$,
\[
  \frac1\kappa\int_{\bR^2}\mathbf x
  \omega_{\varepsilon,t}(\mathbf x)\,\mathrm d\mathbf x
  =\mathbf X_\varepsilon,
  \qquad
  I_\varepsilon(t)
  :=\int_{\bR^2}|\mathbf x-\mathbf X_\varepsilon|^2
  \omega_{\varepsilon,t}(\mathbf x)\,\mathrm d\mathbf x
  =I_\varepsilon(0).
\]
According to the support condition in \eqref{eq:whole-plane-initial-data}, we have
\begin{equation}\label{aax1}
|\mathbf X_\varepsilon|\leq R\varepsilon,
\end{equation}
and
\begin{equation}\label{aax2}
  I_\varepsilon(0)
  =\int_{\bR^2}|\mathbf x|^2
  \omega_{\varepsilon,0}(\mathbf x),\mathrm d\mathbf x
  -\kappa|\mathbf X_\varepsilon|^2
  \leq\kappa R^2\varepsilon^2
\end{equation}
Besides, when \eqref{eq:whole-plane-smallness} holds,
\eqref{eq:whole-plane-shape-persistence} implies
\[
  \A(\omega_{\varepsilon,t})\leq\kappa
  \qquad\text{for every }t\in\bR.
\]
Hence, by \eqref{eq:outside-optimal-ball},
\begin{equation}\label{eq:whole-plane-local-mass}
  \int_{B_{R\varepsilon}(\mathbf b)}
  \omega_{\varepsilon,t}(\mathbf x)\,\mathrm d\mathbf x
  \geq\kappa-\frac12\A(\omega_{\varepsilon,t})
  \geq\frac\kappa2
\end{equation}
for every $t\in\bR$ and every
$\mathbf b\in\Cset(\omega_{\varepsilon,t})$.

Now fix arbitrary $t\in\bR$ and
$\mathbf b\in\Cset(\omega_{\varepsilon,t})$, and set
$d:=|\mathbf b-\mathbf X_\varepsilon|.$
It remains to prove that
\[
  d\leq(1+\sqrt2)R\varepsilon.
\]
If $d\leq R\varepsilon$, this estimate is immediate. Suppose instead
that $d>R\varepsilon$. Then for any
$\mathbf x\in B_{R\varepsilon}(\mathbf b)$, we have
$|\mathbf x-\mathbf X_\varepsilon|\geq d-R\varepsilon.$
Therefore, by \eqref{eq:whole-plane-local-mass},
\[
  I_\varepsilon(0)
  =I_\varepsilon(t)
 \geq
  \int_{B_{R\varepsilon}(\mathbf b)}
  |\mathbf x-\mathbf X_\varepsilon|^2
  \omega_{\varepsilon,t}(\mathbf x)\,\mathrm d\mathbf x \geq\frac\kappa2(d-R\varepsilon)^2,
\]
which in combination with \eqref{aax2} gives
\[
  d
  \leq R\varepsilon
  +\sqrt{\frac{2I_\varepsilon(0)}{\kappa}}
  \leq(1+\sqrt2)R\varepsilon.
\]
Since $t$ and $\mathbf b$ were arbitrary, this proves
\eqref{eq:whole-plane-center-persistence}.
Combining \eqref{eq:whole-plane-center-persistence} and \eqref{aax1},
we get \eqref{eq:whole-plane-center-origin}. The two limiting assertions follow from
\eqref{eq:whole-plane-shape-persistence} and
\eqref{eq:whole-plane-center-origin}.
\end{proof}

\section{Logarithmic tail estimates}\label{sec:estimates_tail}

In this section, we prove that, for every optimal center $\mathbf b$ and all sufficiently
large $L$, the circulation outside $B_{L\varepsilon}(\mathbf b)$ is bounded
by $C\D(f)/\ln L$.  The argument relies on a layer-cake representation of
the deficit.

For a nonnegative $f\in L^\infty(\bR^2)$ with compact support and $r>0$,
set
\[
  J_r(f):=\iint_{|\mathbf x-\mathbf y|<r}
  f(\mathbf x)f(\mathbf y)\,\mathrm d\mathbf x\,\mathrm d\mathbf y.
\]
This quantity measures the product mass of ordered point pairs separated by
less than $r$ and is natural for decomposing the logarithmic interaction
across distance scales.  Clearly, $J_r(f)\geq0$, $r\mapsto J_r(f)$ is
nondecreasing, and $\lim_{r\to\infty}J_r(f)=\|f\|_1^2$ (in fact, $J_r(f)=\|f\|_1^2$ once $r$ exceeds the diameter of $\operatorname{supp}f$).
Moreover, for every $r>0$,
\begin{equation}\label{eq:Jr-upper}
  J_r(f)
  =\int_{\bR^2}f(\mathbf x)
    \left(\int_{B_r(\mathbf x)}f(\mathbf y)\,\mathrm d\mathbf y\right)
    \,\mathrm d\mathbf x
 \leq \pi r^2\|f\|_1\|f\|_\infty.
\end{equation}

\begin{lemma}[Logarithmic layer-cake identity]
Let $f\in L^\infty(\bR^2)$ be nonnegative and compactly supported. Then
\begin{equation}\label{eq:layer-cake}
  4\pi\D(f)=\int_0^\infty
  \frac{J_r(f^*)-J_r(f)}r\,\mathrm dr.
\end{equation}
\end{lemma}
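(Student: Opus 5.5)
The plan is to write the logarithm as an integral over scales and then interchange integrals. For distinct $\mathbf x,\mathbf y$ I would like to use $\ln\frac{1}{|\mathbf x-\mathbf y|}=\int_0^\infty \frac{\mathbf 1_{\{|\mathbf x-\mathbf y|<r\}}-\mathbf 1_{\{r<1\}}}{r}\,\mathrm dr$. This identity is correct, but the integrand is not absolutely integrable jointly in $(r,\mathbf x,\mathbf y)$. The reason is that the subtracted term $\mathbf 1_{\{r<1\}}/r$ diverges at $r=0$ when it stands alone.

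I would therefore work with the difference $f^*\otimes f^*-f\otimes f$ from the outset and truncate in $r$. Fix $0<\delta<1<\Lambda$ with $\Lambda$ larger than the diameters of $\operatorname{supp}f$ and $\operatorname{supp}f^*$. For $0<|\mathbf z|$ define
\[
\ell_{\delta,\Lambda}(\mathbf z):=\int_\delta^\Lambda\frac{\mathbf 1_{\{|\mathbf z|<r\}}}{r}\,\mathrm dr=\ln\frac{\Lambda}{\max(|\mathbf z|,\delta)}\quad\text{for }|\mathbf z|<\Lambda .
\]
Everything in the truncated problem is bounded, so Fubini applies. Using the identity $\int\!\!\int f^*f^*=\|f\|_1^2=\int\!\!\int ff$, which holds by equimeasurability, the constant $\ln\Lambda$ cancels in the difference. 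This gives
\[
\int_\delta^\Lambda\frac{J_r(f^*)-J_r(f)}{r}\,\mathrm dr=\iint \ln\frac{1}{\max(|\mathbf x-\mathbf y|,\delta)}\bigl[f^*(\mathbf x)f^*(\mathbf y)-f(\mathbf x)f(\mathbf y)\bigr]\,\mathrm d\mathbf x\,\mathrm d\mathbf y .
\]

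Next I remove the two truncations.

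\begin{enumerate}[label=\textup{(\roman*)}]
\item \emph{The upper limit $\Lambda$.} For $r\ge\Lambda$ both $J_r(f^*)$ and $J_r(f)$ equal $\|f\|_1^2$, so the integrand vanishes and nothing is lost.
\item \emph{The lower limit $\delta$.} Let $\delta\to0$. On the right-hand side, the kernel $\ln\frac{1}{\max(|\mathbf z|,\delta)}$ is dominated by $|\ln|\mathbf z||+C$. That bound is integrable against $f\otimes f$ and $f^*\otimes f^*$, because both functions are bounded and compactly supported. Dominated convergence then sends the right-hand side to $4\pi(\Pcal(f^*)-\Pcal(f))=4\pi\D(f)$.
\end{enumerate}

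The main obstacle is the left-hand side as $\delta\to0$, since the integrand there need not have a sign. I would avoid sign issues by establishing absolute integrability near $r=0$. By \eqref{eq:Jr-upper}, $|J_r(f^*)-J_r(f)|\le 2\pi r^2\|f\|_1\|f\|_\infty$, so $|J_r(f^*)-J_r(f)|/r\le Cr$, which is integrable on $(0,1)$. On $[1,\Lambda]$ the integrand is bounded. Hence the integral over $(0,\infty)$ converges absolutely, and its truncations converge to it. Equating the two limits yields \eqref{eq:layer-cake}.
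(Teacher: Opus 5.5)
Your argument is correct and is essentially the paper's proof. Both rest on $\int_0^{R_0}\mathbf 1_{\{|\mathbf x-\mathbf y|<r\}}\,\frac{\mathrm dr}{r}=\ln\frac{R_0}{|\mathbf x-\mathbf y|}$ with $R_0$ exceeding the support diameters, apply Fubini separately for $q=f$ and $q=f^*$, and cancel $\|f\|_1^2\ln R_0$.

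Your extra cutoff at $\delta$ is harmless but unnecessary. The integrand is nonnegative and $\int_0^{R_0}J_r(q)\,\frac{\mathrm dr}{r}<\infty$ by \eqref{eq:Jr-upper}, so Tonelli applies directly on $(0,R_0)$.

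Your motivating identity is also wrong as written: it should have $\mathbf 1_{\{r\geq1\}}$ in place of $\mathbf 1_{\{r<1\}}$. As written it diverges at both ends, and the real obstruction is at $r=\infty$, not $r=0$. This plays no role in your actual argument.
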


\begin{proof}
The claim is immediate if $f\equiv0$.  Otherwise, set
$\kappa:=\|f\|_1=\|f^*\|_1$ and choose $R_0$ larger than the diameters of
$\operatorname{supp}f$ and $\operatorname{supp}f^*$.  Then, for $q=f$ or
$q=f^*$, it holds that
\[
  \ln\frac{R_0}{|\mathbf x-\mathbf y|}
  =\int_0^{R_0}
  \mathbf1_{\{|\mathbf x-\mathbf y|<r\}}\,\frac{\mathrm dr}{r}
  \qquad
  \text{for a.e. }(\mathbf x,\mathbf y)
  \in\operatorname{supp}q\times\operatorname{supp}q.
\]
To justify interchanging the order of integration, we first observe that the
relevant iterated absolute integral is finite.  Indeed,
by \eqref{eq:Jr-upper} applied to $q$,
\[
  \int_0^{R_0}\iint_{\bR^2\times\bR^2}
  \mathbf1_{\{|\mathbf x-\mathbf y|<r\}}
  \frac{q(\mathbf x)q(\mathbf y)}r
  \,\mathrm d\mathbf x\,\mathrm d\mathbf y\,\mathrm dr
  =\int_0^{R_0}\frac{J_r(q)}r\,\mathrm dr
  \leq\frac12\pi R_0^2\kappa\|q\|_\infty<\infty.
\]
Fubini's theorem therefore gives
\begin{align}
  \iint_{\bR^2\times\bR^2}\ln\frac{R_0}{|\mathbf x-\mathbf y|}
  q(\mathbf x)q(\mathbf y)\,\mathrm d\mathbf x\,\mathrm d\mathbf y&=\int_0^{R_0}\iint_{\bR^2\times\bR^2}
  \mathbf1_{\{|\mathbf x-\mathbf y|<r\}}
  \frac{q(\mathbf x)q(\mathbf y)}r
  \,\mathrm d\mathbf x\,\mathrm d\mathbf y\,\mathrm dr \notag \\
  &=  \int_0^{R_0}\frac{J_r(q)}r\,\mathrm dr.\label{eq:truncated-layer-cake}
\end{align}
Since
\[
  \iint_{\bR^2\times\bR^2}\ln\frac{R_0}{|\mathbf x-\mathbf y|}
  q(\mathbf x)q(\mathbf y)\,\mathrm d\mathbf x\,\mathrm d\mathbf y
  =\kappa^2\ln R_0+4\pi\Pcal(q),
\]
subtracting the identity \eqref{eq:truncated-layer-cake} for $q=f$ from that for $q=f^*$ cancels the terms
$\kappa^2\ln R_0$,
which gives
 \[
  4\pi\D(f)=\int_0^{R_0}
  \frac{J_r(f^*)-J_r(f)}r\,\mathrm dr.
\]
Since $J_r(f)=J_r(f^*)=\kappa^2$ for $r\geq R_0$, the upper limit $R_0$  in the above integral can be replaced by $\infty$, which proves \eqref{eq:layer-cake}.
\end{proof}

The identity \eqref{eq:layer-cake} converts the separation between a
concentrated core and a distant tail into a contribution to the rearrangement
deficit.  Combining it with the control of the mass near an optimal center
yields the following logarithmic tail estimate.  This estimate will play a
key role in controlling the regular part of the energy in the next section.

\begin{proposition}[Logarithmic tail bound]\label{prop:tail}
Fix $\kappa,M,R>0$.  Then there exist $\delta_{\rm t},L_{\rm t},C_{\rm t}>0$,
depending only on $\kappa,M,R$, such that, if $\varepsilon>0$ and
$f\in L^\infty(\bR^2)$ is nonnegative, compactly supported, and satisfies
\[
  \|f\|_1=\kappa,\qquad
  \|f\|_\infty\leq M\varepsilon^{-2},\qquad
  \operatorname{supp}f^*\subset B_{R\varepsilon}(\mathbf0),\qquad
  \D(f)\leq\delta_{\rm t},
\]
then, for every
$\mathbf b\in\Cset(f)$ and every $L\geq L_{\rm t}$,
\begin{equation}\label{eq:tail}
  \int_{\bR^2\setminus B_{L\varepsilon}(\mathbf b)}f
  \,\mathrm d\mathbf x
  \leq\frac{C_{\rm t}}{\ln L}\D(f).
\end{equation}
\end{proposition}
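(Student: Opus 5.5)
We translate so that the optimal center sits at the origin and then read off the tail from the layer-cake identity \eqref{eq:layer-cake}. The tail is separated from the core on a whole range of scales $r$, which produces a factor $\ln L$. Fix $\mathbf b\in\Cset(f)$ and set $g:=f(\cdot+\mathbf b)$. Then $g^*=f^*$ and $\D(g)=\D(f)$, because $\Pcal$ is translation invariant. Moreover $\mathbf 0\in\Cset(g)$, and $m:=\int_{\bR^2\setminus B_{L\varepsilon}(\mathbf0)}g$ is the quantity to be bounded.

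\emph{Step 1 (core mass).} By \cref{lem:shape-coercivity} we have $\A(f)^2\leq\D(f)/c_0\leq\delta_{\rm t}/c_0$. Choosing $\delta_{\rm t}\leq c_0\kappa^2$ gives $\A(f)\leq\kappa$. By the optimal-center property in \cref{lem:centers}, already used in \cref{rk140}, we have $\int_{\bR^2\setminus B_{R\varepsilon}(\mathbf0)}g\leq\frac12\A(f)$. Hence the core mass satisfies $m_c:=\int_{B_{R\varepsilon}(\mathbf0)}g\geq\kappa-\frac12\A(f)\geq\kappa/2$. The support hypothesis on $f^*$ is what makes the appendix inequality applicable with radius $R\varepsilon$.

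\emph{Step 2 (pointwise sign and a lower bound for the integrand).} For every $r>0$, the Riesz rearrangement inequality \cite[Theorem~3.7]{LiebLoss2001}, applied with the symmetric decreasing kernel $\mathbf1_{B_r}$, gives $J_r(g)\leq J_r(g^*)$. So the integrand in \eqref{eq:layer-cake} is nonnegative, and we may discard all $r$ outside a chosen window. Take $r\in[2R\varepsilon,(L-R)\varepsilon]$ with $L>3R$.
\begin{itemize}
\item Since $\operatorname{supp}g^*\subset B_{R\varepsilon}(\mathbf0)$, we get $J_r(g^*)=\kappa^2$.
\item If $|\mathbf x|\geq L\varepsilon$ and $|\mathbf y|<R\varepsilon$, then $|\mathbf x-\mathbf y|>(L-R)\varepsilon\geq r$. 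Thus the ordered pairs (tail, core) and (core, tail) are all excluded from $J_r(g)$.
\item It follows that $J_r(g)\leq\kappa^2-2m\,m_c\leq\kappa^2-\kappa m$.
\end{itemize}
Therefore
\[
  4\pi\D(f)\geq\int_{2R\varepsilon}^{(L-R)\varepsilon}\frac{\kappa m}{r}\,\mathrm dr
  =\kappa m\ln\frac{L-R}{2R}.
\]

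\emph{Step 3 (choice of constants).} Choose $L_{\rm t}$, depending only on $R$, so large that $\ln\frac{L-R}{2R}\geq\frac12\ln L$ for all $L\geq L_{\rm t}$. This gives \eqref{eq:tail} with $C_{\rm t}=8\pi/\kappa$. The dependence of all constants is only on $\kappa,M,R$, through $c_0$.

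The main point to check is Step 2. We need $J_r(g)\le J_r(g^*)$ for every $r$ (Riesz with the ball indicator), so that truncating the $r$-integral only decreases it. We also need the core-mass bound from Step 1 to hold uniformly, and this is exactly why the smallness $\D(f)\le\delta_{\rm t}$ is assumed.
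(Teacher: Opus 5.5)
Your proposal is correct and follows the paper's proof essentially step for step. The ingredients match: the core-mass lower bound from coercivity plus the optimal-center inequality, Riesz applied to $\mathbf 1_{B_r}$ to discard scales outside $[2R\varepsilon,(L-R)\varepsilon]$, the core--tail product bound on $\kappa^2-J_r$, and integration of $\mathrm dr/r$ in the layer-cake identity. The differences are cosmetic: the translation to $g$, and the larger threshold $\delta_{\rm t}\le c_0\kappa^2$, which gives core mass $\kappa/2$ instead of $3\kappa/4$ and hence $C_{\rm t}=8\pi/\kappa$ rather than $16\pi/(3\kappa)$. Do state explicitly that $L_{\rm t}>\max\{3R,1\}$, so that the window is nonempty and $\ln L>0$.
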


\begin{proof}
Let $c_0$ be the constant in \eqref{eq:shape-coercivity}, and fix
$0<\delta_{\rm t}<c_0\kappa^2/4$.  Then
\eqref{eq:shape-coercivity} and $\D(f)\leq\delta_{\rm t}$ give
 $\A(f)<\kappa/2$.  For every optimal
center $\mathbf b\in\Cset(f)$, \eqref{eq:outside-optimal-ball} therefore
gives
\[
  \int_{\bR^2\setminus B_{R\varepsilon}(\mathbf b)}f
  \,\mathrm d\mathbf x
  \leq\frac12\A(f)<\frac\kappa4.
\]
It follows that
\begin{equation}\label{eq:core-mass-lower}
  \int_{B_{R\varepsilon}(\mathbf b)}f
  \,\mathrm d\mathbf x
  \geq\frac{3\kappa}{4}\qquad(\mathbf b\in\Cset(f)).
\end{equation}

Fix $\mathbf b\in\Cset(f)$ and $L>\max\{3R,1\}$, and set
\[
  K:=B_{R\varepsilon}(\mathbf b),
  \qquad
  T:=\bR^2\setminus B_{L\varepsilon}(\mathbf b).
\]
Since
$\operatorname{supp}f^*\subset
B_{R\varepsilon}(\mathbf0)$, one has
$J_r(f^*)=\kappa^2$ whenever $r\geq2R\varepsilon$.  If
$r\in[2R\varepsilon,(L-R)\varepsilon)$, point pairs in
$K\times T$ and in the transposed product $T\times K$ do not contribute to
$J_r(f)$.  Indeed, for $\mathbf x\in K$ and $\mathbf y\in T$,
\[
  |\mathbf x-\mathbf y|
  \geq|\mathbf y-\mathbf b|-|\mathbf x-\mathbf b|
  \geq(L-R)\varepsilon>r.
\]
The same is true after interchanging $\mathbf x$ and
$\mathbf y$.  Thus both $K\times T$ and $T\times K$ are contained in
$\{(\mathbf x,\mathbf y):|\mathbf x-\mathbf y|\geq r\}$.  Moreover,
$K\cap T=\varnothing$, so these two rectangles are disjoint.  Using
$J_r(f^*)=\kappa^2$, the nonnegativity of $f$, Fubini's theorem, and
\eqref{eq:core-mass-lower}, we obtain
\begin{align}
  J_r(f^*)-J_r(f)
  &=\kappa^2-J_r(f) \notag\\
  &=\iint_{|\mathbf x-\mathbf y|\geq r}
  f(\mathbf x)f(\mathbf y)
  \,\mathrm d\mathbf x\,\mathrm d\mathbf y \notag\\
  &\geq\iint_{K\times T}f(\mathbf x)f(\mathbf y)
  \,\mathrm d\mathbf x\,\mathrm d\mathbf y
  +\iint_{T\times K}f(\mathbf x)f(\mathbf y)
  \,\mathrm d\mathbf x\,\mathrm d\mathbf y \notag\\
  &=2\left(\int_K f
  \,\mathrm d\mathbf x\right)
  \left(\int_T f
  \,\mathrm d\mathbf x\right)\notag\\
  &\geq\frac{3\kappa}{2}
  \int_T f
  \,\mathrm d\mathbf x.\label{eq:tail-pair-lower}
\end{align}
The right-hand side of \eqref{eq:tail-pair-lower} is independent of $r$
throughout $[2R\varepsilon,(L-R)\varepsilon)$.  By the Riesz rearrangement
inequality applied to the kernel $\mathbf1_{B_r(\mathbf0)}$, one has
$J_r(f^*)\geq J_r(f)$ for every $r>0$.  Hence the part of the layer-cake
integral outside this interval may be discarded.  Integrating
\eqref{eq:tail-pair-lower} over the indicated interval in
\eqref{eq:layer-cake} gives
\[
  4\pi\D(f)
  \geq\frac{3\kappa}{2}
  \ln\frac{L-R}{2R}
  \int_T f
  \,\mathrm d\mathbf x.
\]
Since
\[
  \ln\frac{L-R}{2R}-\frac12\ln L\longrightarrow\infty
  \qquad\text{as }L\to\infty,
\]
we may choose $L_{\rm t}>\max\{3R,1\}$ large enough such that
\[
  \ln((L-R)/(2R))\geq\frac12\ln L
\]
for every $L\geq L_{\rm t}$.
After rearrangement, this is \eqref{eq:tail}, for example with
$C_{\rm t}=16\pi/(3\kappa)$.
\end{proof}

\begin{remark}\label{rem:tail-mass-center}
In general, the center $\mathbf b$ in \eqref{eq:tail} cannot be replaced by
the mass center
\begin{equation}\label{eq:mass-center}
  \mathbf X_f:=\frac1\kappa\int_{\bR^2}
  \mathbf x f(\mathbf x)\,\mathrm d\mathbf x.
\end{equation}
Indeed, the mass center is sensitive to a small amount of vorticity far
from the core.  If a mass $\alpha$ is moved to distance $d$ from a
concentrated core, then its contribution to the displacement of the mass center is of order $\alpha d$, whereas its contribution to the logarithmic
interaction deficit is only of order $\alpha\ln(d/\varepsilon)$.  Thus one
may have
\[
  \alpha\ln(d/\varepsilon)\longrightarrow0,
  \qquad
  \frac{\alpha d}{\varepsilon}\longrightarrow\infty,
\]
so that the deficit remains small while the mass center moves arbitrarily
far from the core.
\end{remark}

\begin{remark}
Meyer \cite[Theorem~1.2, in particular (1.17)]{Meyer2025} proves a more
general logarithmically weighted estimate for the far-field vorticity after
a core--far-field decomposition.  His estimate is centered at the mass center of the extracted core
rather than at the mass center of the entire vorticity.  By contrast, \cref{prop:tail} requires no such
decomposition: it controls the entire mass outside
$B_{L\varepsilon}(\mathbf b)$ for every $L^1$-optimal translation center
$\mathbf b\in\Cset(f)$.  This is the form needed below.
\end{remark}

\section{Regular energy estimates}\label{sec:robin}

Using the logarithmic tail estimate from the previous section, we establish
two estimates for the regular energy of a concentrated vortex.  The first is
a lower bound that replaces the distributed vortex by the Robin energy at an
optimal center, with an arbitrarily small multiple of the rearrangement
deficit and an $O(\varepsilon)$ remainder.  The second is an
$O(\varepsilon)$ approximation of the initial regular energy by the Robin
energy at the prescribed initial center.  These two estimates are key
ingredients in the Lyapunov arguments in \cref{sec:global,sec:two-vortices}.

To begin with, we collect the four scale-uniform assumptions into one
notation for later use.  For $\varepsilon,\kappa,M,R>0$, set
\begin{equation*}
  \mathscr R_{\varepsilon,\kappa,M,R}(D)
  :=\left\{f\in L^\infty(D)\;\middle|\;
  \begin{aligned}
  &f\geq0, \qquad   \|f\|_1=\kappa,\\
  &\|f\|_\infty\leq M\varepsilon^{-2},\  \operatorname{supp}f^*\subset B_{R\varepsilon}(\mathbf0)
  \end{aligned}
  \right\}.
\end{equation*}
Note that
$\mathscr R_{\varepsilon,\kappa,M,R}(D)$ is nonempty whenever
\[
  0<\varepsilon\leq\sqrt{\frac{M|D|}{\kappa}}
  \qquad\text{and}\qquad
  \kappa<\pi MR^2.
\]

\begin{proposition}[Lower bound for the regular energy]
\label{prop:robin-lower}
Fix $\kappa,M,R>0$.  Let $K$ be a nonempty compact subset of $D$ and let
$\theta>0$.  Then there exist
constants
\[
  \delta_{\rm R}=\delta_{\rm R}(\kappa,M,R)>0,
  \qquad
  \varepsilon_{\rm R}>0,
  \qquad
  C_{\rm R}>0,
\]
where $\varepsilon_{\rm R}$ and $C_{\rm R}$ depend only on
$D,K,\theta,\kappa,M,R$,
such that, whenever
$f\in\mathscr R_{\varepsilon,\kappa,M,R}(D)$,
$\mathbf b\in\Cset(f)\cap K$,
$\D(f)\leq\delta_{\rm R}$, and
$0<\varepsilon<\varepsilon_{\rm R}$,
\begin{equation}\label{eq:robin-lower}
  \Rcal(f)\geq\kappa^2H(\mathbf b)-\theta\D(f)-C_{\rm R}\varepsilon.
\end{equation}
\end{proposition}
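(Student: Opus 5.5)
The plan is to split $f$ into a core near the optimal center $\mathbf b$ and a far tail. On the core, $h$ is smooth, so the regular energy is close to $\kappa^2H(\mathbf b)$. The tail carries only $O(\D(f)/\ln L)$ circulation by \cref{prop:tail}, and its interaction terms can be bounded from below once we know $h$ is bounded below on all of $D\times D$.

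\emph{Step 1 (a lower bound for $h$).} For fixed $\mathbf y\in D$, the function $h(\cdot,\mathbf y)$ is harmonic in $D$ with boundary values $\frac1{2\pi}\ln\frac1{|\mathbf x-\mathbf y|}\geq-\frac1{2\pi}\ln\operatorname{diam}D$. By the maximum principle,
\[
  h(\mathbf x,\mathbf y)\geq -c_D:=-\tfrac1{2\pi}\ln^+\operatorname{diam}D
  \qquad\text{on }D\times D.
\]
This is what makes a one-sided estimate possible even though $h$ blows up near $\partial D$ and the tail may approach the boundary.

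\emph{Step 2 (core/tail split).} Set $\delta_{\rm R}:=\delta_{\rm t}(\kappa,M,R)$ from \cref{prop:tail}. Let $d:=\operatorname{dist}(K,\partial D)>0$ and $K':=\{\mathbf x:\operatorname{dist}(\mathbf x,K)\leq d/2\}\Subset D$. Let $C_K$ denote a Lipschitz constant of $h$ on $K'\times K'$ together with a bound for $|h|$ there. Fix $L\geq L_{\rm t}$, to be chosen depending on $\theta$. Then take $\varepsilon_{\rm R}$ so small that $L\varepsilon_{\rm R}<d/2$, which gives $B_{L\varepsilon}(\mathbf b)\subset K'$. Write $f=g+\tau$ with $g:=f\mathbf 1_{B_{L\varepsilon}(\mathbf b)}$ and $\tau\geq0$. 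By \eqref{eq:tail}, $m:=\|\tau\|_1\leq C_{\rm t}\D(f)/\ln L$. Expanding the quadratic form gives
\[
  \Rcal(f)=\Rcal(g)+\iint h\,g\,\tau+\tfrac12\iint h\,\tau\,\tau .
\]
By Step 1, the last two terms are bounded below by $-c_D\kappa m-\tfrac12c_D m^2\geq-\tfrac32c_D\kappa m$.

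\emph{Step 3 (the core).} On $\operatorname{supp}g\times\operatorname{supp}g\subset B_{L\varepsilon}(\mathbf b)^2$ we have $|h(\mathbf x,\mathbf y)-h(\mathbf b,\mathbf b)|\leq 2C_KL\varepsilon$. Hence
\[
  \Rcal(g)\geq H(\mathbf b)(\kappa-m)^2-C_KL\kappa^2\varepsilon
  \geq\kappa^2H(\mathbf b)-2\kappa\sup_K|H|\,m-C_KL\kappa^2\varepsilon .
\]

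\emph{Step 4 (conclusion).} Combining Steps 2 and 3 gives
\[
  \Rcal(f)\geq\kappa^2H(\mathbf b)-C_1 m-C_KL\kappa^2\varepsilon,
  \qquad C_1=C_1(D,K,\kappa),
\]
and therefore
\[
  \Rcal(f)\geq\kappa^2H(\mathbf b)-\frac{C_1C_{\rm t}}{\ln L}\,\D(f)-C_KL\kappa^2\varepsilon .
\]
Choose $L\geq L_{\rm t}$ large enough that $C_1C_{\rm t}/\ln L\leq\theta$. Then set $C_{\rm R}:=C_KL\kappa^2$, and fix $\varepsilon_{\rm R}$ as in Step 2. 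All constants depend only on $D,K,\theta,\kappa,M,R$, while $\delta_{\rm R}$ depends only on $\kappa,M,R$.

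The main obstacle is controlling the tail, which can filament toward $\partial D$ where $h$ is unbounded. Two facts together handle it: the interactions involving the tail only need a lower bound, which Step 1 supplies; and the tail mass is small with the logarithmic gain from \cref{prop:tail}. That gain is what lets the coefficient of $\D(f)$ be made an arbitrarily small $\theta$ at the price of an $L$-dependent $O(\varepsilon)$ error.
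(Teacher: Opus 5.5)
Your proposal is correct and follows essentially the paper's argument. You split at $B_{L\varepsilon}(\mathbf b)$, bound the tail mass by the logarithmic tail estimate, approximate $h$ near $(\mathbf b,\mathbf b)$ on the core by the mean value theorem, and use a global lower bound for $h$ on the interactions involving the tail, then choose $L$ large depending on $\theta$ and $\varepsilon_{\rm R}\sim \operatorname{dist}(K,\partial D)/L$. Your maximum-principle derivation of $h\geq -\frac{1}{2\pi}\ln^+\operatorname{diam}D$ supplies a detail that the paper only asserts.
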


\begin{proof}
Set $\delta_{\rm R}:=\delta_{\rm t}$, and let $L_{\rm t}$ and $C_{\rm t}$ be as in \cref{prop:tail}.  Thus $\delta_{\rm R}$, $L_{\rm t}$, and $C_{\rm t}$ depend only on
$\kappa,M,R$.  For any $L\geq L_{\rm t}$, write
\[
  N=B_{L\varepsilon}(\mathbf b),
  \qquad
  m=\int_{D\setminus N}f\,\mathrm d\mathbf x.
\]
Then, by \eqref{eq:tail},
\begin{equation}\label{eq:robin-tail-mass}
  m\leq\frac{C_{\rm t}}{\ln L}\D(f).
\end{equation}
Since $\mathbf b\in K$, it follows that
$N\Subset D$ whenever $\varepsilon$ satisfies
\begin{equation}\label{ledk1}
  L\varepsilon<d_K/2,
  \qquad d_K:=\operatorname{dist}(K,\partial D)>0.
\end{equation}
By $h(\mathbf b,\mathbf b)=2H(\mathbf b)$ and the mean value theorem, we have
\[
  |h(\mathbf x,\mathbf y)-2H(\mathbf b)|
  \leq C_K L\varepsilon
  \qquad(\mathbf x,\mathbf y\in N),
\]
for some $C_K>0$ depending only on $D$ and $K$.  Hence
\begin{equation}\label{eq:h-local}
  h(\mathbf x,\mathbf y)\geq2H(\mathbf b)-C_KL\varepsilon
  \qquad(\mathbf x,\mathbf y\in N).
\end{equation}
We also need the fact that $h$ is bounded from below in $D\times D$:
\begin{equation}\label{eq:h-global-lower}
  h(\mathbf x,\mathbf y)\geq-C_D
  \qquad ((\mathbf x,\mathbf y)\in D\times D),
\end{equation}
where $C_D>0$ depends only on $D$.

Now we are ready to estimate $\mathcal R(f)$.
Since the mass of $f$ inside $N$ is $\kappa-m$, we have
\begin{align*}
  \iint_{N\times N}f(\mathbf x)f(\mathbf y)
  \,\mathrm d\mathbf x\,\mathrm d\mathbf y
  &=(\kappa-m)^2,\\
  \iint_{(D\times D)\setminus(N\times N)}
  f(\mathbf x)f(\mathbf y)
  \,\mathrm d\mathbf x\,\mathrm d\mathbf y
  &=2 m(\kappa-m)+m^2=2\kappa m-m^2.
\end{align*}
Applying \eqref{eq:h-local} on $N\times N$ and
\eqref{eq:h-global-lower} on the complement, and using
$0\leq m\leq\kappa$, we obtain
\begin{align}
  2\Rcal(f)&=\iint_{D\times D}h(\mathbf x,\mathbf y)f(\mathbf x)f(\mathbf y)
  \,\mathrm d\mathbf x\,\mathrm d\mathbf y \notag\\
  &\geq\left(2H(\mathbf b)-C_KL\varepsilon\right)(\kappa-m)^2
  -C_D(2\kappa m-m^2) \notag\\
  &=2\kappa^2H(\mathbf b)
  +(2H(\mathbf b)+C_D)(m^2-2\kappa m)
  -C_KL\varepsilon(\kappa-m)^2 \notag\\
  &\geq2\kappa^2H(\mathbf b)
  -2\kappa m\left(2\|H\|_{L^\infty(K)}+C_D\right)
  -C_K\kappa^2L\varepsilon.\label{rflbdd}
\end{align}
Here we used
\[
  2H(\mathbf b)=h(\mathbf b,\mathbf b)\geq-C_D,
  \qquad
  (\kappa-m)^2\leq\kappa^2.
\]
From \eqref{eq:robin-tail-mass} and \eqref{rflbdd}, we get
\begin{equation}\label{rflbdd2}
  \Rcal(f)
   \geq\kappa^2H(\mathbf b)
  -\frac{\kappa C_{\rm t}
  \left(2\|H\|_{L^\infty(K)}+C_D\right)}{\ln L}\D(f)
  -\frac12C_K\kappa^2L\varepsilon.
\end{equation}
Choose $L=L(D,K,\theta,\kappa,M,R)\geq L_{\rm t}$ sufficiently large
such that
\[
  \frac{\kappa C_{\rm t}
  \left(2\|H\|_{L^\infty(K)}+C_D\right)}{\ln L}
  \leq\theta.
\]
With this choice of $L$, \eqref{rflbdd2} gives the desired coefficient of
$\D(f)$.  Finally, set
\[
  \varepsilon_{\rm R}:=\frac{d_K}{2L},
  \qquad
  C_{\rm R}:=\frac12C_K\kappa^2L.
\]
Then \eqref{ledk1} holds whenever
$0<\varepsilon<\varepsilon_{\rm R}$, and \eqref{eq:robin-lower} follows
from \eqref{rflbdd2}.
\end{proof}

\begin{lemma}[Initial approximation of the regular energy]
\label{lem:initial-robin}
Let $\kappa,R>0$, $\mathbf a_*\in D$, and $\rho>0$ satisfy
$B_\rho(\mathbf a_*)\Subset D$.
Then there exist
$\varepsilon_{\rm I}=\varepsilon_{\rm I}(D,\rho,\mathbf a_*,R)>0$ and
$C_{\rm I}=C_{\rm I}(D,\rho,\mathbf a_*,\kappa,R)>0$ such that whenever
$0<\varepsilon<\varepsilon_{\rm I}$,
$\mathbf a_\varepsilon\in B_\rho(\mathbf a_*)$, and a nonnegative
$\omega_{\varepsilon,0}\in L^\infty(D)$ satisfies
\[
  \int_D\omega_{\varepsilon,0}\,\mathrm d\mathbf x=\kappa,
  \qquad
  \operatorname{supp}\omega_{\varepsilon,0}
  \subset B_{R\varepsilon}(\mathbf a_\varepsilon),
\]
one has
\begin{equation*}
  |\Rcal(\omega_{\varepsilon,0})
  -\kappa^2H(\mathbf a_\varepsilon)|\leq C_{\rm I}\varepsilon.
\end{equation*}
\end{lemma}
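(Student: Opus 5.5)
The plan is a direct Lipschitz estimate for $h$ near the diagonal point $(\mathbf a_\varepsilon,\mathbf a_\varepsilon)$. No tail or deficit bounds are needed, because the whole support already sits in the small ball $B_{R\varepsilon}(\mathbf a_\varepsilon)$.

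First I fix the compact set $K:=\overline{B_\rho(\mathbf a_*)}\subset D$ and set $d:=\operatorname{dist}(K,\partial D)>0$. Then I choose $\varepsilon_{\rm I}:=d/(2R)$. For $0<\varepsilon<\varepsilon_{\rm I}$ and $\mathbf a_\varepsilon\in B_\rho(\mathbf a_*)$, the ball $N:=B_{R\varepsilon}(\mathbf a_\varepsilon)$ lies in the compact set
\[
  K':=\{\mathbf x\in D:\operatorname{dist}(\mathbf x,K)\leq d/2\}\subset D.
\]
This set depends only on $D,\rho,\mathbf a_*$.

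Since $h$ extends smoothly to $D\times D$, it is Lipschitz on the compact set $K'\times K'$, with a constant $C_{K'}$ depending only on $D,\rho,\mathbf a_*$. Using $h(\mathbf a_\varepsilon,\mathbf a_\varepsilon)=2H(\mathbf a_\varepsilon)$, for all $\mathbf x,\mathbf y\in N$ we get
\[
  |h(\mathbf x,\mathbf y)-2H(\mathbf a_\varepsilon)|\leq C_{K'}(|\mathbf x-\mathbf a_\varepsilon|+|\mathbf y-\mathbf a_\varepsilon|)\leq 2C_{K'}R\varepsilon .
\]

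Next I use $\operatorname{supp}\omega_{\varepsilon,0}\subset N$, nonnegativity, and $\int\omega_{\varepsilon,0}=\kappa$. Together these give
\[
  2\Rcal(\omega_{\varepsilon,0})-2\kappa^2H(\mathbf a_\varepsilon)
  =\iint_{N\times N}\bigl[h(\mathbf x,\mathbf y)-2H(\mathbf a_\varepsilon)\bigr]\omega_{\varepsilon,0}(\mathbf x)\omega_{\varepsilon,0}(\mathbf y)\,\mathrm d\mathbf x\,\mathrm d\mathbf y .
\]
The absolute value of the right-hand side is at most $2C_{K'}R\varepsilon\kappa^2$. This yields the claim with $C_{\rm I}:=C_{K'}R\kappa^2$, and the constant dependencies match the statement.

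The only point requiring care is that the Lipschitz constant be uniform in $\mathbf a_\varepsilon$. This is handled by working on the fixed compact neighbourhood $K'$ of $\overline{B_\rho(\mathbf a_*)}$, which uses $B_\rho(\mathbf a_*)\Subset D$. No $L^\infty$ bound on $\omega_{\varepsilon,0}$ is needed.
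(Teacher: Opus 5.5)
Your proof is correct and is essentially the paper's argument: you choose $\varepsilon_{\rm I}=d/(2R)$, work in a fixed compact $d/2$-neighbourhood of $\overline{B_\rho(\mathbf a_*)}$, use a uniform derivative bound for $h$ there, and integrate against $\omega_{\varepsilon,0}(\mathbf x)\omega_{\varepsilon,0}(\mathbf y)$ to get $C_{\rm I}=C R\kappa^2$. The only cosmetic difference is that the paper applies the mean value theorem along segments in the convex ball $B_{R\varepsilon}(\mathbf a_\varepsilon)$ rather than quoting Lipschitz continuity of $h$ on $K'\times K'$, which amounts to the same thing.
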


\begin{proof}
Since the initial vorticity is supported in a single $O(\varepsilon)$-ball,
no tail estimate is needed here.
Since $B_\rho(\mathbf a_*)\Subset D$,
\[
  d_\rho:=\operatorname{dist}
  \left(\overline{B_\rho(\mathbf a_*)},\partial D\right)>0.
\]
Take $\varepsilon_{\rm I}=d_\rho/(2R)$, and set
\[
  K_\rho:=\left\{\mathbf x\in\overline D:
  \operatorname{dist}
  (\mathbf x,\overline{B_\rho(\mathbf a_*)})
  \leq d_\rho/2\right\}\Subset D.
\]
If $0<\varepsilon<\varepsilon_{\rm I}$ and
$\mathbf x\in B_{R\varepsilon}(\mathbf a_\varepsilon)$, then
\[
  \operatorname{dist}
  (\mathbf x,\overline{B_\rho(\mathbf a_*)})
  \leq|\mathbf x-\mathbf a_\varepsilon|
  <R\varepsilon<d_\rho/2.
\]
Thus $B_{R\varepsilon}(\mathbf a_\varepsilon)\subset K_\rho$.
Since $h$ is smooth on $D\times D$ and $K_\rho\times K_\rho$ is compact,
there exists $C_h>0$, depending only on $D$, $\rho$, and $\mathbf a_*$, such
that
\[
  \max\bigl\{|\nabla_{\mathbf x}h(\mathbf x,\mathbf y)|,
  |\nabla_{\mathbf y}h(\mathbf x,\mathbf y)|\bigr\}\leq C_h
  \qquad ((\mathbf x,\mathbf y)\in K_\rho\times K_\rho).
\]
For any $\mathbf x,\mathbf y\in B_{R\varepsilon}(\mathbf a_\varepsilon)$,
the relevant line segments remain in the convex ball
$B_{R\varepsilon}(\mathbf a_\varepsilon)\subset K_\rho$.  Applying the
mean value theorem successively in the two variables therefore gives
\begin{align*}
  |h(\mathbf x,\mathbf y)-2H(\mathbf a_\varepsilon)|
  &=|h(\mathbf x,\mathbf y)
  -h(\mathbf a_\varepsilon,\mathbf a_\varepsilon)|\\
  &\leq C_h(|\mathbf x-\mathbf a_\varepsilon|
  +|\mathbf y-\mathbf a_\varepsilon|)\\
  &\leq 2C_hR\varepsilon.
\end{align*}
By the preceding pointwise estimate and the definition of $\Rcal$ in
\eqref{eq:energy-split}, we obtain
\begin{align*}
  |\Rcal(\omega_{\varepsilon,0})
  -\kappa^2H(\mathbf a_\varepsilon)|&=\frac12\left|
  \iint_{D\times D}
  \bigl[h(\mathbf x,\mathbf y)
  -h(\mathbf a_\varepsilon,\mathbf a_\varepsilon)\bigr]
  \omega_{\varepsilon,0}(\mathbf x)
  \omega_{\varepsilon,0}(\mathbf y)
  \,\mathrm d\mathbf x\,\mathrm d\mathbf y
  \right|\\
  & \leq  C_hR\kappa^2\varepsilon.
\end{align*}
Taking $C_{\rm I}:=C_hR\kappa^2$ completes the proof.
\end{proof}

\section{Global persistence: one vortex}
\label{sec:global}

We now prove the quantitative one-vortex estimate in \cref{thm:main} by
combining the preceding fixed-time estimates with a conserved Lyapunov
quantity and a set-valued first-exit argument.  The qualitative result in
\cref{thm:intro-one} then follows from this estimate.

\subsection{Conserved Lyapunov quantity and conditional estimate}

Recall the setting of \cref{thm:main}.  The parameters $\kappa,M,R>0$ are
fixed, $\mathbf a_\varepsilon\in B_\rho(\mathbf a_*)$, and
$\omega_{\varepsilon,t}$ denotes the Yudovich solution with nonnegative
initial vorticity $\omega_{\varepsilon,0}\in L^\infty(D)$
satisfying
\[
  \int_D\omega_{\varepsilon,0}\,\mathrm d\mathbf x=\kappa,
  \qquad
  \|\omega_{\varepsilon,0}\|_\infty\leq M\varepsilon^{-2},
  \qquad
  \operatorname{supp}\omega_{\varepsilon,0}
  \subset B_{R\varepsilon}(\mathbf a_\varepsilon).
\]
Since the Euler flow is measure-preserving, for every $t\in\bR$,
\[
  \omega_{\varepsilon,t}\geq0,\qquad
  \omega_{\varepsilon,t}^*=\omega_{\varepsilon,0}^*,\qquad
  \|\omega_{\varepsilon,t}\|_1=\kappa,\qquad
  \|\omega_{\varepsilon,t}\|_\infty
  \leq M\varepsilon^{-2}.
\]
Moreover,
\[
  \operatorname{supp}\omega_{\varepsilon,t}^*
  =\operatorname{supp}\omega_{\varepsilon,0}^*
  \subset B_{R\varepsilon}(\mathbf0).
\]
In view of these transport identities,
$\omega_{\varepsilon,t}\in
\mathscr R_{\varepsilon,\kappa,M,R}(D)$ for every $t\in\bR$.

A suitable conserved quantity should capture both errors to be controlled:
the rearrangement deficit and the Robin-function gap.  This suggests defining
\begin{equation}\label{eq:Lyapunov}
  \mathcal L(t)
  :=\Pcal(\omega_{\varepsilon,0}^*)-\Ecal(\omega_{\varepsilon,t})
  -\kappa^2H(\mathbf a_*).
\end{equation}
By conservation of the kinetic energy, we have
\begin{equation}\label{eq:L-conservation}
  \mathcal L(t)=\mathcal L(0)\qquad (t\in\bR ).
\end{equation}
By \eqref{eq:energy-split}, the quantity in \eqref{eq:Lyapunov}
can be rewritten as
\begin{equation}\label{eq:L-split}
  \mathcal L(t)
  =\D(\omega_{\varepsilon,t})
  +\Rcal(\omega_{\varepsilon,t})-\kappa^2H(\mathbf a_*).
\end{equation}

We next derive a conditional estimate from \eqref{eq:L-split}.  For this
purpose, we apply \cref{prop:robin-lower} with
$K=\overline{B_\rho(\mathbf a_*)}$ and $\theta=1/2$.  Denote the
corresponding deficit threshold, concentration-scale threshold, and
remainder constant by
$\delta_{\rm R}$, $\varepsilon_{\rm R}$, and $C_{\rm R}$, respectively.
Whenever $0<\varepsilon<\varepsilon_{\rm R}$ and
\begin{equation}\label{eq:inside-hyp}
  \Cset(\omega_{\varepsilon,t})
  \subset\overline{B_\rho(\mathbf a_*)},
  \qquad
  \D(\omega_{\varepsilon,t})\leq\delta_{\rm R},
\end{equation}
according to \cref{prop:robin-lower}, we have
\[
  \Rcal(\omega_{\varepsilon,t})
  \geq\kappa^2H(\mathbf b)
  -\frac12\D(\omega_{\varepsilon,t})-C_{\rm R}\varepsilon\qquad (\mathbf b\in\Cset(\omega_{\varepsilon,t})).
\]
Substituting this
inequality into \eqref{eq:L-split}, we obtain
\begin{equation}\label{eq:L-lower}
  \mathcal L(t)+C_{\rm R}\varepsilon
  \geq\frac12\D(\omega_{\varepsilon,t})
  +\kappa^2[H(\mathbf b)-H(\mathbf a_*)].
\end{equation}
Recall that $\varepsilon_{\rm I}$ and $C_{\rm I}$ are the constants from
\cref{lem:initial-robin}.  If
$0<\varepsilon<\varepsilon_{\rm I}$, then
\begin{equation}\label{eq:L-initial}
  \mathcal L(0)
  \leq\D(\omega_{\varepsilon,0})
  +\kappa^2[H(\mathbf a_\varepsilon)-H(\mathbf a_*)]
  +C_{\rm I}\varepsilon.
\end{equation}
Taking into account \eqref{eq:L-conservation}, \eqref{eq:L-lower}, and
\eqref{eq:L-initial}, for every
$\mathbf b\in\Cset(\omega_{\varepsilon,t})$ we obtain
\[
  \frac12\D(\omega_{\varepsilon,t})
  +\kappa^2[H(\mathbf b)-H(\mathbf a_*)]
  \leq\D(\omega_{\varepsilon,0})
  +\kappa^2[H(\mathbf a_\varepsilon)-H(\mathbf a_*)]
  +(C_{\rm R}+C_{\rm I})\varepsilon.
\]
Define $C_{\rm one}:=2\max\{1,C_{\rm R}+C_{\rm I}\}$.  In view of the
preceding discussion, we obtain the following proposition.

\begin{proposition}[Conditional estimate inside the trapping well]
\label{prop:one-inside}
Let $\delta_{\rm R}$, $\varepsilon_{\rm R}$,
$\varepsilon_{\rm I}$, and $C_{\rm one}$ be the positive constants introduced above, which depend only on
$D,\rho,\mathbf a_*,\kappa,M,R$.
Suppose that
$0<\varepsilon<\min\{\varepsilon_{\rm R},\varepsilon_{\rm I}\}$ and
that, at some time $t\in\bR$, \eqref{eq:inside-hyp} holds.  Then
\begin{equation}\label{eq:inside-estimate}
   \D(\omega_{\varepsilon,t})
  +\kappa^2\sup_{\mathbf b\in\Cset(\omega_{\varepsilon,t})}
  [H(\mathbf b)-H(\mathbf a_*)]
 \leq C_{\rm one}\left\{\D(\omega_{\varepsilon,0})
  +\kappa^2[H(\mathbf a_\varepsilon)-H(\mathbf a_*)]
  +\varepsilon\right\}.
\end{equation}
\end{proposition}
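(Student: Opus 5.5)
The estimate \eqref{eq:inside-estimate} is essentially assembled in the discussion preceding the statement, so the plan is to organize that computation and check that all quantities are well defined under the hypotheses. First I would record that $\omega_{\varepsilon,t}\in\mathscr R_{\varepsilon,\kappa,M,R}(D)$ for every $t$, by equimeasurability. By \cref{lem:centers}, $\Cset(\omega_{\varepsilon,t})$ is nonempty and compact, so the supremum in \eqref{eq:inside-estimate} is over a nonempty set. Next I would use the conserved quantity $\mathcal L$ from \eqref{eq:Lyapunov}. By \eqref{eq:energy-split} and $\omega_{\varepsilon,t}^*=\omega_{\varepsilon,0}^*$ it takes the split form \eqref{eq:L-split}, and by energy conservation $\mathcal L(t)=\mathcal L(0)$.

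For the lower bound at time $t$, I fix an arbitrary $\mathbf b\in\Cset(\omega_{\varepsilon,t})$. By \eqref{eq:inside-hyp}, $\mathbf b\in K:=\overline{B_\rho(\mathbf a_*)}$, which is a compact subset of $D$ by \eqref{eq:well}, and $\D(\omega_{\varepsilon,t})\le\delta_{\rm R}$. So \cref{prop:robin-lower} with $\theta=1/2$ applies for $\varepsilon<\varepsilon_{\rm R}$ and yields \eqref{eq:L-lower}. For the upper bound at time $0$, the initial support lies in $B_{R\varepsilon}(\mathbf a_\varepsilon)$ with $\mathbf a_\varepsilon\in B_\rho(\mathbf a_*)$. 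Hence \cref{lem:initial-robin} gives $\Rcal(\omega_{\varepsilon,0})\le\kappa^2H(\mathbf a_\varepsilon)+C_{\rm I}\varepsilon$, and inserting this into \eqref{eq:L-split} at $t=0$ gives \eqref{eq:L-initial}. Chaining these through the conservation law gives, for each $\mathbf b$,
\[
\tfrac12\D(\omega_{\varepsilon,t})+\kappa^2[H(\mathbf b)-H(\mathbf a_*)]\le \D(\omega_{\varepsilon,0})+\kappa^2[H(\mathbf a_\varepsilon)-H(\mathbf a_*)]+(C_{\rm R}+C_{\rm I})\varepsilon .
\]

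The only step needing a little care is converting this into the stated form with a single supremum and coefficient $1$ on $\D$. Two points must be checked. First, $H(\mathbf b)-H(\mathbf a_*)\ge0$ for $\mathbf b\in K$ by \eqref{eq:strict-well}, and $H(\mathbf a_\varepsilon)-H(\mathbf a_*)\ge0$ as well, so every bracket is nonnegative. Second, the right-hand side does not depend on $\mathbf b$, so I may take the supremum over $\mathbf b\in\Cset(\omega_{\varepsilon,t})$ on the left. Multiplying by $2$ then gives
\[
\D(\omega_{\varepsilon,t})+2\kappa^2\sup_{\mathbf b}[H(\mathbf b)-H(\mathbf a_*)]\le 2\{\cdots\}.
\]
Since the Robin gap is nonnegative, dropping one copy of it leaves coefficient $1$ on the left. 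On the right, I bound $2\D(\omega_{\varepsilon,0})+2\kappa^2[H(\mathbf a_\varepsilon)-H(\mathbf a_*)]+2(C_{\rm R}+C_{\rm I})\varepsilon$ by $C_{\rm one}\{\cdots\}$, using $C_{\rm one}=2\max\{1,C_{\rm R}+C_{\rm I}\}$ and the nonnegativity of each term.

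Finally, I would confirm the dependence of the constants. Here $\delta_{\rm R}$ depends on $(\kappa,M,R)$; $\varepsilon_{\rm R}$ and $C_{\rm R}$ depend on $(D,K,\theta,\kappa,M,R)$ with $K$ determined by $\mathbf a_*,\rho$ and $\theta=1/2$; and $\varepsilon_{\rm I}$ and $C_{\rm I}$ depend on $(D,\rho,\mathbf a_*,\kappa,R)$. So all constants depend only on $D,\rho,\mathbf a_*,\kappa,M,R$, as claimed.
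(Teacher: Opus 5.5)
Your proposal is correct and follows the paper's own route. It uses the conserved quantity $\mathcal L$ in its split form, applies \cref{prop:robin-lower} with $K=\overline{B_\rho(\mathbf a_*)}$ and $\theta=1/2$ at time $t$ and \cref{lem:initial-robin} at time $0$, and then absorbs the constants into $C_{\rm one}=2\max\{1,C_{\rm R}+C_{\rm I}\}$. Your last step spells out what the paper leaves implicit: taking the supremum over $\Cset(\omega_{\varepsilon,t})$, doubling, and discarding one copy of the nonnegative Robin gap.
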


\subsection{Global propagation and the quantitative estimate}

\begin{proof}[Proof of \cref{thm:main}]
We propagate the conditional estimate in \cref{prop:one-inside} by applying
the set-valued first-exit lemma in \cref{lem:first-exit}.

\medskip\noindent\textbf{Step 1: choose a deficit threshold.} By the Riesz
rearrangement inequality,
$\D(\omega_{\varepsilon,0})\geq0$, while
according to \eqref{eq:strict-well},
$H(\mathbf a_\varepsilon)-H(\mathbf a_*)\geq0$.  Thus every term in
$\eta_{\varepsilon,0}$ is nonnegative.  Let $c_0$ be the constant in
\eqref{eq:shape-coercivity} and set
\[
  \delta:=\min\left\{\frac{\delta_{\rm R}}{2},\frac{c_0\kappa^2}{2}\right\}>0.
\]
If $\D(\omega_{\varepsilon,t})\leq\delta$, then
$\D(\omega_{\varepsilon,t})\leq\delta_{\rm R}$, so the deficit condition in
\cref{prop:robin-lower} is satisfied.  Moreover,
in view of \eqref{eq:shape-coercivity}, we have
\[
  \A(\omega_{\varepsilon,t})^2
  \leq\frac{\delta}{c_0}\leq\frac{\kappa^2}{2},
  \qquad
  \A(\omega_{\varepsilon,t})<\kappa.
\]
By the center-diameter estimate \eqref{eq:center-diameter}, we therefore have
\begin{equation}\label{eq:proof-diameter}
  \operatorname{diam}\Cset(\omega_{\varepsilon,t})
  \leq2R\varepsilon
  \quad\text{if }\D(\omega_{\varepsilon,t})\leq\delta.
\end{equation}

\medskip\noindent\textbf{Step 2: set up the first-exit argument.} We use
\cref{lem:first-exit} with
$V=B_\rho(\mathbf a_*)$, $w=\kappa^2H$,
$\mathbf p_*=\mathbf a_*$,
$d(t)=\D(\omega_{\varepsilon,t})$, and
$S(t)=\Cset(\omega_{\varepsilon,t})$.  Let
$\eta_*,r_*>0$ be the thresholds determined by \cref{lem:first-exit},
which depend only on $D,\rho,\mathbf a_*,\kappa,M,R$.  Recall that \eqref{eq:strict-well} makes $\mathbf a_*$ the
unique minimum point of
$\kappa^2H$ on $\overline{B_\rho(\mathbf a_*)}$.  By
\cite[Theorem~13(ii)]{Burton2005},
\[
  t\longmapsto\omega_{\varepsilon,t}
  \quad\text{is continuous from }\bR\text{ to }L^2(D).
\]
By equimeasurability, we have
\[
  d(t)=\Pcal(\omega_{\varepsilon,0}^*)
  -\Pcal(\omega_{\varepsilon,t}),
\]
so \eqref{eq:P-Lipschitz} with $p=2$ shows that $d$ is continuous.  Since
$D$ is bounded,
\[
  \|\omega_{\varepsilon,t}-\omega_{\varepsilon,s}\|_1
  \leq |D|^{1/2}
  \|\omega_{\varepsilon,t}-\omega_{\varepsilon,s}\|_2,
\]
and hence the solution is also continuous in $L^1(D)$.  According to
\cref{lem:centers}, $S$ is upper semicontinuous and has nonempty compact values.

Recall from \eqref{eq:quantitative-smallness} that
\[
  \eta_{\varepsilon,0}
  =\D(\omega_{\varepsilon,0})
  +\kappa^2[H(\mathbf a_\varepsilon)-H(\mathbf a_*)]
  +\varepsilon.
\]
In view of the definitions of $d$, $S$, $w$, and $\mathbf p_*$ above,
\eqref{eq:inside-estimate} reads
\[
  d(t)+\sup_{\mathbf b\in S(t)}[w(\mathbf b)-w(\mathbf p_*)]
  \leq C_{\rm one}\eta_{\varepsilon,0}
\]
whenever the hypotheses of \cref{prop:one-inside} hold.

\medskip\noindent\textbf{Step 3: separate the controlled region from the
boundary.} By \eqref{eq:strict-well} and compactness of
$\partial B_\rho(\mathbf a_*)$, the boundary gap
\[
  \gamma_\rho:=\kappa^2
  \min_{\mathbf b\in\partial B_\rho(\mathbf a_*)}
  [H(\mathbf b)-H(\mathbf a_*)]>0.
\]
Choose $\eta_0>0$ sufficiently small such that
\begin{equation}\label{eq:one-threshold-choice}
  C_{\rm one}\eta_0<\min\{\delta,\eta_*\},
  \qquad
  \eta_0<\frac{\gamma_\rho}{2}.
\end{equation}
Define
\[
  K_0:=\left\{\mathbf a\in\overline{B_\rho(\mathbf a_*)}:
  \kappa^2[H(\mathbf a)-H(\mathbf a_*)]
  \leq\eta_0\right\}.
\]
Taking into account that $\mathbf a_*\in K_0$ and that, for every
$\mathbf b\in\partial B_\rho(\mathbf a_*)$,
\[
  \kappa^2[H(\mathbf b)-H(\mathbf a_*)]
  \geq\gamma_\rho>2\eta_0,
\]
we see that $K_0$ is nonempty and does not meet
$\partial B_\rho(\mathbf a_*)$.  It is closed in the compact set
$\overline{B_\rho(\mathbf a_*)}$; hence $K_0\Subset B_\rho(\mathbf a_*)$.
In particular,
\[
  d_{\rm one}:=\operatorname{dist}
  (K_0,\bR^2\setminus B_\rho(\mathbf a_*))>0.
\]
From now on, we choose $\varepsilon_0>0$ such that
\begin{equation}\label{eq:one-epsilon-choice}
  \varepsilon_0<\min\left\{
  \varepsilon_{\rm R},\varepsilon_{\rm I},
  \frac{r_*}{2R},
  \frac{d_{\rm one}}{2R}
  \right\}.
\end{equation}

\medskip\noindent\textbf{Step 4: place the initial center set inside the well.} We fix
$0<\varepsilon<\varepsilon_0$ and assume that the initial data satisfy
$\eta_{\varepsilon,0}\leq\eta_0$, as required in
\eqref{eq:quantitative-smallness}.  Since the Robin gap is bounded
above by $\eta_{\varepsilon,0}\leq\eta_0$, we have
$\mathbf a_\varepsilon\in K_0$.
Since $C_{\rm one}\geq1$ and
$C_{\rm one}\eta_0<\delta$, initial smallness also gives
\begin{equation*}
  \D(\omega_{\varepsilon,0})
  \leq\eta_{\varepsilon,0}<\delta.
\end{equation*}
By \eqref{eq:initial-center-close}, applied with $r=s=R\varepsilon$, we have
\[
  \sup_{\mathbf b\in\Cset(\omega_{\varepsilon,0})}
  |\mathbf b-\mathbf a_\varepsilon|\leq2R\varepsilon.
\]
For every such $\mathbf b$, taking into account the definition of
$d_{\rm one}$ and \eqref{eq:one-epsilon-choice}, we obtain
\[
  \operatorname{dist}(\mathbf b,
  \bR^2\setminus B_\rho(\mathbf a_*))
  \geq d_{\rm one}-|\mathbf b-\mathbf a_\varepsilon|
  \geq d_{\rm one}-2R\varepsilon>0.
\]
Consequently,
\[
  \Cset(\omega_{\varepsilon,0})
  \subset B_\rho(\mathbf a_*).
\]

\medskip\noindent\textbf{Step 5: exclude a first exit.} By
\eqref{eq:one-epsilon-choice}, the choice
$0<\varepsilon<\varepsilon_0$ ensures that
\eqref{eq:inside-estimate} holds whenever
$S(t)\subset\overline{B_\rho(\mathbf a_*)}$ and $d(t)\leq\delta$.  Moreover,
according to \eqref{eq:one-threshold-choice} and
$\eta_{\varepsilon,0}\leq\eta_0$, we have
\[
  C_{\rm one}\eta_{\varepsilon,0}
  \leq C_{\rm one}\eta_0<\eta_*,
\]
so \eqref{eq:inside-estimate} implies the conditional bound
\eqref{eq:abstract-inside}.  Whenever $d(t)\leq\delta$, in view of
\eqref{eq:proof-diameter} and \eqref{eq:one-epsilon-choice}, we have
\[
  \operatorname{diam}S(t)\leq2R\varepsilon<r_*.
\]
Finally, recall from Step~4 that $S(0)\subset B_\rho(\mathbf a_*)$ and
$d(0)<\delta$.  Taking into account the continuity and upper-semicontinuity
properties established in Step~2, all the hypotheses of
\cref{lem:first-exit} are satisfied.  Hence, for every
$t\in\bR$,
\[
  \Cset(\omega_{\varepsilon,t})\subset B_\rho(\mathbf a_*),
  \qquad
  \D(\omega_{\varepsilon,t})<\delta.
\]
Consequently, \eqref{eq:inside-estimate} holds for every $t\in\bR$, and
\eqref{eq:quantitative-main} follows with $C=C_{\rm one}$.
\end{proof}

\subsection{Proof of the qualitative one-vortex theorem}

\begin{proof}[Proof of \cref{thm:intro-one}]
Choose $\rho>0$ so that \eqref{eq:well}--\eqref{eq:strict-well} hold.
According to \eqref{eq:localized-upper} and the assumptions on the initial
data, we have
\[
  \D(\omega_{\varepsilon,0})
  \leq C_{\rm loc}\A(\omega_{\varepsilon,0})\longrightarrow0.
\]
Since $\mathbf a_\varepsilon\to\mathbf a_*$ and $H$ is continuous,
$\eta_{\varepsilon,0}\to0$.  Hence, for all sufficiently small
$\varepsilon$, the hypotheses of \cref{thm:main} are satisfied.  Taking into
account \eqref{eq:quantitative-main} and \eqref{eq:shape-coercivity}, we
obtain
\[
  \sup_{t\in\bR}\A(\omega_{\varepsilon,t})^2
  \leq C\eta_{\varepsilon,0}\longrightarrow0,
\]
which proves \eqref{eq:qualitative-conclusion1}.

Moreover, by \eqref{eq:quantitative-main}, we have
\[
  \sup_{t\in\bR}\ \sup_{\mathbf b\in
  \Cset(\omega_{\varepsilon,t})}
  [H(\mathbf b)-H(\mathbf a_*)]
  \leq C\eta_{\varepsilon,0}\longrightarrow0.
\]
In view of \eqref{eq:center-confinement}, every optimal center belongs to
$B_\rho(\mathbf a_*)$.  Since $\overline{B_\rho(\mathbf a_*)}$ is compact
and $\mathbf a_*$ is its unique minimizer of $H$ by
\eqref{eq:strict-well}, we obtain
\[
  \sup_{t\in\bR}\ \sup_{\mathbf b\in
  \Cset(\omega_{\varepsilon,t})}
  |\mathbf b-\mathbf a_*|\longrightarrow0.
\]
Indeed, otherwise we could find $r>0$, a sequence
$\varepsilon_n\downarrow0$, times $t_n\in\bR$, and centers
$\mathbf b_n\in\Cset(\omega_{\varepsilon_n,t_n})$ such that
$|\mathbf b_n-\mathbf a_*|\geq r$.  By compactness, after passing to a
subsequence, we have
$\mathbf b_n\to\mathbf b\in\overline{B_\rho(\mathbf a_*)}$.  By the preceding
estimate and the continuity of $H$, $H(\mathbf b)=H(\mathbf a_*)$, while
$|\mathbf b-\mathbf a_*|\geq r$, contradicting \eqref{eq:strict-well}.
This proves \eqref{eq:qualitative-conclusion2}.
\end{proof}

\section{Global persistence: an opposite-sign vortex pair}
\label{sec:two-vortices}

We now prove the quantitative opposite-sign two-vortex estimate in
\cref{thm:two-vortex}.  The fixed-time shape, tail, and regular-energy
estimates established above apply to each transported component separately,
while the favorable sign of the mutual interaction supplies the additional
control needed for the Lyapunov estimate and the first-exit argument.  The qualitative
two-vortex theorem then follows from this quantitative estimate.

\subsection{Componentwise estimates and the energy decomposition}

Recall the setting of \cref{thm:two-vortex}.  The circulations satisfy
$\boldsymbol\kappa=(\kappa_1,\kappa_2)$ with $\kappa_1>0>\kappa_2$, and
$M_i,R_i>0$ are fixed for $i=1,2$.
Set $\omega_{\varepsilon,0}
:=\zeta_{1,\varepsilon,0}-\zeta_{2,\varepsilon,0}$, let
$\omega_{\varepsilon,t}$ be the corresponding Yudovich solution, and let
$\zeta_{i,\varepsilon,t}$ denote the transport of the $i$th component by the
common Euler flow.  Since this flow is measure preserving, for every
$t\in\bR$ and $i=1,2$,
\[
  \zeta_{i,\varepsilon,t}\geq0,\qquad
  \zeta_{i,\varepsilon,t}^*=\zeta_{i,\varepsilon,0}^*,\qquad
  \|\zeta_{i,\varepsilon,t}\|_1=|\kappa_i|,\qquad
  \|\zeta_{i,\varepsilon,t}\|_\infty
  \leq M_i\varepsilon^{-2}.
\]
Moreover,
$\operatorname{supp}\zeta_{i,\varepsilon,t}^*
=\operatorname{supp}\zeta_{i,\varepsilon,0}^*
\subset B_{R_i\varepsilon}(\mathbf0)$.  Hence
$\zeta_{i,\varepsilon,t}$ remains in
$\mathscr R_{\varepsilon,|\kappa_i|,M_i,R_i}(D)$ globally in time.

In view of these transport identities, we may apply the static estimates
componentwise.  In particular, there
are constants $c_i>0$ such that every
$f\in\mathscr R_{\varepsilon,|\kappa_i|,M_i,R_i}(D)$ satisfies
\begin{equation}\label{eq:two-shape}
  \D(f)\geq c_i\A(f)^2.
\end{equation}
The center, diameter, and logarithmic-tail estimates likewise hold for each
component with $\kappa,M,R$ replaced by $|\kappa_i|,M_i,R_i$.

The first genuinely new point is the sign in the energy splitting.  Recall
the cross interaction from \eqref{eq:cross-interaction}.
Because the total vorticity is $f_1-f_2$,
the polarization identity for the quadratic energy gives
\begin{equation}\label{eq:two-energy-split}
  \Ecal(f_1-f_2)
  =\sum_{i=1}^2[\Pcal(f_i)-\Rcal(f_i)]
  -\mathcal I(f_1,f_2).
\end{equation}
The minus sign will become a plus sign in the Lyapunov quantity.

\subsection{Cross-interaction and position-energy estimates}

\begin{lemma}[Cross-interaction lower bound]
\label{lem:two-cross-lower}
Let $\kappa_1,\kappa_2\in\bR\setminus\{0\}$, and fix $M_i,R_i>0$ for
$i=1,2$.  Let $K$ be a nonempty compact subset of
$\Lambda_2(D)$ and let
$\theta>0$.  Then there exist $\delta_{\rm cross},\varepsilon_{\rm cross},C>0$,
depending only on
\[
  D,K,\theta,|\kappa_1|,|\kappa_2|,M_1,M_2,R_1,R_2,
\]
such that the following holds.

Suppose that
$0<\varepsilon<\varepsilon_{\rm cross}$,
\[
  f_i\in\mathscr R_{\varepsilon,|\kappa_i|,M_i,R_i}(D),
  \qquad \mathbf b_i\in\Cset(f_i), \qquad i=1,2,
\]
$(\mathbf b_1,\mathbf b_2)\in K$, and
\[
  \D(f_1)+\D(f_2)\leq\delta_{\rm cross}.
\]
\nopagebreak[4]
Then the cross interaction satisfies
\nopagebreak[4]
\begin{equation}\label{eq:two-cross-lower}
  \mathcal I(f_1,f_2)
  \geq|\kappa_1\kappa_2|G(\mathbf b_1,\mathbf b_2)
  -\theta[\D(f_1)+\D(f_2)]-C\varepsilon.
\end{equation}
\end{lemma}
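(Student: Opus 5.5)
We follow the proof of \cref{prop:robin-lower}, with $h$ replaced by the full Green function $G$ and with positivity of $G$ used to discard the tails. Let $\delta_{\rm t}^{(i)},L_{\rm t}^{(i)},C_{\rm t}^{(i)}$ be the constants of \cref{prop:tail} for the $i$th component, with $\kappa,M,R$ replaced by $|\kappa_i|,M_i,R_i$. Set $\delta_{\rm cross}:=\min_i\delta_{\rm t}^{(i)}$, and fix $L\geq\max_iL_{\rm t}^{(i)}$, to be chosen depending on $\theta$. Put $N_i:=B_{L\varepsilon}(\mathbf b_i)$ and $m_i:=\int_{D\setminus N_i}f_i\,\mathrm d\mathbf x$. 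Then \eqref{eq:tail} gives
\[
  m_i\leq\frac{C_{\rm t}^{(i)}}{\ln L}\D(f_i).
\]

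Since $K\subset\Lambda_2(D)$ is compact, there is $d_K>0$ with $|\mathbf b_1-\mathbf b_2|\geq 2d_K$ and $\operatorname{dist}(\mathbf b_i,\partial D)\geq 2d_K$ for every $(\mathbf b_1,\mathbf b_2)\in K$. Take $\varepsilon_{\rm cross}:=d_K/(2L)$. For $\varepsilon<\varepsilon_{\rm cross}$, the balls $N_1,N_2$ are then disjoint, compactly contained in $D$, and separated by a distance of at least $d_K$. The set
\[
  K':=\{(\mathbf x,\mathbf y):\operatorname{dist}((\mathbf x,\mathbf y),K)\leq d_K/2\}
\]
is compact and contained in $\Lambda_2(D)$, and $G$ is smooth there. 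The mean value theorem therefore gives
\[
  G(\mathbf x,\mathbf y)\geq G(\mathbf b_1,\mathbf b_2)-C_{K}L\varepsilon
  \qquad(\mathbf x\in N_1,\ \mathbf y\in N_2).
\]

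The tails are handled by the \emph{positivity} of the Dirichlet Green function. Since $G>0$ on $D\times D$ off the diagonal and $f_1,f_2\geq0$, we may drop every contribution to $\mathcal I(f_1,f_2)$ except the one from $N_1\times N_2$:
\[
  \mathcal I(f_1,f_2)\geq\iint_{N_1\times N_2}G f_1f_2
  \geq\bigl(G(\mathbf b_1,\mathbf b_2)-C_KL\varepsilon\bigr)(|\kappa_1|-m_1)(|\kappa_2|-m_2).
\]
We may assume $G(\mathbf b_1,\mathbf b_2)-C_KL\varepsilon\geq0$ by shrinking $\varepsilon_{\rm cross}$. Expanding the product and using $G(\mathbf b_1,\mathbf b_2)\leq\|G\|_{L^\infty(K)}$ gives
\[
  \mathcal I(f_1,f_2)\geq|\kappa_1\kappa_2|G(\mathbf b_1,\mathbf b_2)
  -\|G\|_{L^\infty(K)}\bigl(|\kappa_2|m_1+|\kappa_1|m_2\bigr)
  -C_K|\kappa_1\kappa_2|L\varepsilon .
\]
Inserting the tail bounds, the coefficient of $\D(f_i)$ is at most $C'/\ln L$, where $C'$ depends only on the listed data. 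Choosing $L$ large enough that $C'/\ln L\leq\theta$, and then setting $C:=C_K|\kappa_1\kappa_2|L$, gives \eqref{eq:two-cross-lower}.

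The main obstacle would be a negative or singular kernel in the tail region: if $f_1$ leaked near the core of $f_2$, the region where $G$ blows up would have to be controlled. Positivity of $G$ removes this difficulty entirely, since in a lower bound all cross terms outside $N_1\times N_2$ can simply be discarded. What remains is to check that the chosen constants depend only on the stated data, which they do because $L$ depends only on $\theta$ and the tail constants.
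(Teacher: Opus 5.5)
Your proposal is correct and follows the paper's proof essentially step by step. Both arguments use the componentwise tail bounds from \cref{prop:tail} and a mean-value estimate for $G$ on $N_1\times N_2$, discard everything outside $N_1\times N_2$ by positivity of $G$, and then choose $L$ so that the coefficient of order $1/\ln L$ is at most $\theta$. One small correction: with $L\varepsilon<d_K/2$, points of $N_1\times N_2$ can lie at $\mathbb R^4$-distance up to $\sqrt2\,L\varepsilon$ from $(\mathbf b_1,\mathbf b_2)$, so they need not lie in your $K'$; take $K'$ with radius $d_K$ or halve $\varepsilon_{\rm cross}$.
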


\begin{proof}
For each $i=1,2$, let
$\delta_{{\rm t},i}$, $L_{{\rm t},i}$, and $C_{{\rm t},i}$ be the constants
given by \cref{prop:tail} with $\kappa=|\kappa_i|$, $M=M_i$, and
$R=R_i$.  For
$L\geq\max\{L_{{\rm t},1},L_{{\rm t},2}\}$, temporarily assume that
$\D(f_i)\leq\delta_{{\rm t},i}$ and set
\[
  N_i:=B_{L\varepsilon}(\mathbf b_i),
  \qquad
  m_i:=\int_{D\setminus N_i}f_i
  \,\mathrm d\mathbf x
  \leq\frac{C_{{\rm t},i}}{\ln L}\D(f_i).
\]
For $i=1,2$, let
\begin{equation}\label{eq:coordinate-projections}
  K_i:=\{\mathbf b_i\in D:
  (\mathbf b_1,\mathbf b_2)\in K
  \text{ for some }\mathbf b_{3-i}\in D\}
\end{equation}
denote the projection of $K$ onto the $i$th coordinate, and set
\[
  d_K:=\min\left\{
  \operatorname{dist}(K_1,\partial D),
  \operatorname{dist}(K_2,\partial D),
  \min_{(\mathbf b_1,\mathbf b_2)\in K}
  |\mathbf b_1-\mathbf b_2|\right\}>0.
\]
Fix such an $L$ and suppose that $L\varepsilon<d_K/4$.  Then, uniformly for
$(\mathbf b_1,\mathbf b_2)\in K$,
\[
  \operatorname{dist}(N_i,\partial D)
  \geq d_K-L\varepsilon>\frac{3d_K}{4},\quad i=1,2,
\]
  and
\[
  \operatorname{dist}(N_1,N_2)
  \geq d_K-2L\varepsilon>\frac{d_K}{2}.
\]
Moreover,
\[
  N_1\times N_2
  \subset
  \bigcup_{(\mathbf a_1,\mathbf a_2)\in K}
  \overline{B_{d_K/4}(\mathbf a_1)}
  \times\overline{B_{d_K/4}(\mathbf a_2)}
  \Subset\Lambda_2(D).
\]
By the mean value theorem, we can therefore find a constant $C_K>0$,
depending only on $D$ and $K$, such that
\[
  G(\mathbf x,\mathbf y)
  \geq G(\mathbf b_1,\mathbf b_2)-C_K L\varepsilon
\]
for
$(\mathbf x,\mathbf y)\in
B_{L\varepsilon}(\mathbf b_1)\times
B_{L\varepsilon}(\mathbf b_2)$.

Since $G>0$ on $\Lambda_2(D)$ by the strong maximum principle and
$f_1,f_2\geq0$,
\begin{align*}
  \mathcal I(f_1,f_2)
  &\geq\iint_{N_1\times N_2}
  G(\mathbf x,\mathbf y)f_1(\mathbf x)f_2(\mathbf y)
  \,\mathrm d\mathbf x\,\mathrm d\mathbf y\\
  &\geq\left( G(\mathbf b_1,\mathbf b_2)-C_K L\varepsilon\right)
  (|\kappa_1|-m_1)(|\kappa_2|-m_2).
\end{align*}
Expanding the right-hand side and taking into account
$0\leq m_i\leq|\kappa_i|$ and the boundedness of $G$ on $K$, we obtain
\[
  \mathcal I(f_1,f_2)
  \geq|\kappa_1\kappa_2|G(\mathbf b_1,\mathbf b_2)
  -C_1(m_1+m_2)-C_1L\varepsilon.
\]
Here $C_1$ depends only on $D$, $K$, and the circulations, and is independent
of $L$ and $\varepsilon$.

Choose $L\geq\max\{L_{{\rm t},1},L_{{\rm t},2}\}$ sufficiently large such that
\[
  \frac{C_1 C_{{\rm t},i}}{\ln L}\leq\theta,
  \qquad i=1,2.
\]
After fixing this $L$, set
\[
  \delta_{\rm cross}:=
  \min\left\{\delta_{{\rm t},1},\delta_{{\rm t},2}\right\},
  \qquad
  \varepsilon_{\rm cross}:=d_K/(4L).
\]
By the definitions of $\delta_{\rm cross}$ and
$\varepsilon_{\rm cross}$, the hypotheses of the lemma ensure that both tail
estimates are valid and that $L\varepsilon<d_K/4$.  Moreover,
\[
  C_1(m_1+m_2)
  \leq\theta[\D(f_1)+\D(f_2)].
\]
Taking $C:=C_1L$, we obtain \eqref{eq:two-cross-lower}.
\end{proof}

\begin{proposition}[Position-energy lower bound for an opposite-sign pair]
\label{prop:two-position-lower}
Let $\kappa_1>0>\kappa_2$, and fix $M_i,R_i>0$ for $i=1,2$.  Let $K$ be
a nonempty compact subset of $\Lambda_2(D)$ and let
$\theta>0$.  Then there exist $\delta_{\rm pos},\varepsilon_{\rm pos},C>0$,
depending only on
\[
  D,K,\theta,|\kappa_1|,|\kappa_2|,M_1,M_2,R_1,R_2,
\]
such that the following holds.

If $0<\varepsilon<\varepsilon_{\rm pos}$,
\[
  f_i\in\mathscr R_{\varepsilon,|\kappa_i|,M_i,R_i}(D),
  \qquad \mathbf b_i\in\Cset(f_i), \qquad i=1,2,
\]
$(\mathbf b_1,\mathbf b_2)\in K$, and
\[
  \D(f_1)+\D(f_2)\leq\delta_{\rm pos},
\]
then
\begin{align}
  \Rcal(f_1)+\Rcal(f_2)+\mathcal I(f_1,f_2)
 \geq
  W_{\boldsymbol\kappa}^{(2)}(\mathbf b_1,\mathbf b_2)
  -\theta[\D(f_1)+\D(f_2)]-C\varepsilon.
  \label{eq:two-position-lower}
\end{align}
\end{proposition}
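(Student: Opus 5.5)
The plan is to add the componentwise regular-energy lower bounds from \cref{prop:robin-lower} to the cross-interaction lower bound from \cref{lem:two-cross-lower}, and then identify the leading terms with $W_{\boldsymbol\kappa}^{(2)}$. Since $\kappa_1\kappa_2<0$, we have $|\kappa_1\kappa_2|=-\kappa_1\kappa_2$. Hence
\[
  \kappa_1^2H(\mathbf b_1)+\kappa_2^2H(\mathbf b_2)
  +|\kappa_1\kappa_2|G(\mathbf b_1,\mathbf b_2)
  =W_{\boldsymbol\kappa}^{(2)}(\mathbf b_1,\mathbf b_2)
\]
by \eqref{eq:intro-KR}. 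This is exactly where the opposite-sign assumption enters: the cross term appears with a plus sign, and the lower bound on $\mathcal I$ is in the right direction.

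\textbf{Step 1: choose compact sets.} Let $K_1,K_2\subset D$ be the coordinate projections of $K$, as in \eqref{eq:coordinate-projections}. They are compact, being continuous images of a compact set, and they lie in $D$ because $K\subset\Lambda_2(D)\subset D\times D$. Then $(\mathbf b_1,\mathbf b_2)\in K$ gives $\mathbf b_i\in\Cset(f_i)\cap K_i$.

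\textbf{Step 2: componentwise regular energy.} For $i=1,2$, apply \cref{prop:robin-lower} with compact set $K_i$, parameter $\theta/2$, and $(\kappa,M,R)=(|\kappa_i|,M_i,R_i)$. This yields thresholds $\delta_{{\rm R},i}$ and $\varepsilon_{{\rm R},i}$ and a constant $C_{{\rm R},i}$ such that
\[
  \Rcal(f_i)\geq\kappa_i^2H(\mathbf b_i)-\tfrac{\theta}{2}\D(f_i)-C_{{\rm R},i}\varepsilon.
\]

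\textbf{Step 3: cross interaction.} Apply \cref{lem:two-cross-lower} with the same $K$ and parameter $\theta/2$. This gives
\[
  \mathcal I(f_1,f_2)\geq|\kappa_1\kappa_2|G(\mathbf b_1,\mathbf b_2)
  -\tfrac{\theta}{2}[\D(f_1)+\D(f_2)]-C\varepsilon.
\]

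\textbf{Step 4: combine.} Set
\[
  \delta_{\rm pos}:=\min\{\delta_{{\rm R},1},\delta_{{\rm R},2},\delta_{\rm cross}\},
  \qquad
  \varepsilon_{\rm pos}:=\min\{\varepsilon_{{\rm R},1},\varepsilon_{{\rm R},2},\varepsilon_{\rm cross}\}.
\]
The hypothesis $\D(f_1)+\D(f_2)\leq\delta_{\rm pos}$ implies $\D(f_i)\leq\delta_{{\rm R},i}$ for each $i$, because both deficits are nonnegative by the Riesz rearrangement inequality. Summing the three inequalities, the deficit coefficients total
\[
  \tfrac{\theta}{2}\D(f_1)+\tfrac{\theta}{2}\D(f_2)+\tfrac{\theta}{2}[\D(f_1)+\D(f_2)]
  =\theta[\D(f_1)+\D(f_2)].
\]
The $\varepsilon$-terms combine into a single constant $C_{{\rm R},1}+C_{{\rm R},2}+C$ times $\varepsilon$. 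By the identity above, the leading terms sum to $W_{\boldsymbol\kappa}^{(2)}(\mathbf b_1,\mathbf b_2)$, which gives \eqref{eq:two-position-lower}.

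\textbf{Dependencies and the main check.} I expect no real obstacle, since the substantive work was done in \cref{prop:robin-lower} and \cref{lem:two-cross-lower}. The points to verify are the dependencies of the constants. The projections $K_i$ are determined by $K$, so every constant depends only on the listed quantities. Also, $\delta_{\rm R}$ in \cref{prop:robin-lower} depends only on $(|\kappa_i|,M_i,R_i)$, which is consistent with the stated dependence of $\delta_{\rm pos}$.
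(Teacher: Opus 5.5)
Your proposal is correct and matches the paper's proof essentially step for step. The paper likewise applies \cref{prop:robin-lower} to each component on the coordinate projections $K_i$ with $\theta/2$, and \cref{lem:two-cross-lower} on $K$ with $\theta/2$. It then takes $\delta_{\rm pos}$ and $\varepsilon_{\rm pos}$ as the minima of the thresholds, sums the $\varepsilon$-constants, and identifies the leading terms with $W_{\boldsymbol\kappa}^{(2)}(\mathbf b_1,\mathbf b_2)$ via $|\kappa_1\kappa_2|=-\kappa_1\kappa_2$.
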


\begin{proof}
For $i=1,2$, we take $K_i$ as in \eqref{eq:coordinate-projections}.
By \cref{prop:robin-lower}, applied with circulation $|\kappa_i|$, compact set
$K_i$, parameters $M_i$ and $R_i$, and $\theta/2$ in place of $\theta$, there are constants
$\delta_{{\rm R},i},\varepsilon_{{\rm R},i},C_{{\rm R},i}>0$ such that
\[
  \Rcal(f_i)\geq|\kappa_i|^2H(\mathbf b_i)
  -\frac{\theta}{2}\D(f_i)-C_{{\rm R},i}\varepsilon
\]
whenever
$\D(f_i)\leq\delta_{{\rm R},i}$ and
$0<\varepsilon<\varepsilon_{{\rm R},i}$.

Next, apply \cref{lem:two-cross-lower} with the compact set $K$ and the parameter
$\theta/2$, and denote the resulting constants by
$\delta_{\rm cross},\varepsilon_{\rm cross},C_{\rm cross}>0$.
Define
\[
  \begin{aligned}
  \delta_{\rm pos}
  &:=\min\left\{\delta_{\rm cross},
  \delta_{{\rm R},1},\delta_{{\rm R},2}\right\},\\
  \varepsilon_{\rm pos}
  &:=\min\left\{\varepsilon_{\rm cross},
  \varepsilon_{{\rm R},1},\varepsilon_{{\rm R},2}\right\}
  \end{aligned}
\]
and set
\[
  C:=C_{{\rm R},1}+C_{{\rm R},2}+C_{\rm cross}.
\]
In view of the definitions of $\delta_{\rm pos}$ and
$\varepsilon_{\rm pos}$, all three estimates apply.  Adding them gives
\begin{align*}
  &\Rcal(f_1)+\Rcal(f_2)+\mathcal I(f_1,f_2)\\
  \geq &\sum_{i=1}^2|\kappa_i|^2H(\mathbf b_i)
  +|\kappa_1\kappa_2|G(\mathbf b_1,\mathbf b_2)  -\theta[\D(f_1)+\D(f_2)]-C\varepsilon\\
  =&W_{\boldsymbol\kappa}^{(2)}(\mathbf b_1,\mathbf b_2)
  -\theta[\D(f_1)+\D(f_2)]-C\varepsilon.
\end{align*}
Here we have used
$|\kappa_i|^2=\kappa_i^2$ and
$|\kappa_1\kappa_2|=-\kappa_1\kappa_2$.
This proves \eqref{eq:two-position-lower}.
\end{proof}

\subsection{Lyapunov identity and control inside the trapping well}

The following conserved quantity combines the componentwise deficits, the two regular-energy terms, and the mutual interaction.  For notational convenience,
write
\[
  V_\rho:=B_\rho(\mathbf a_{*,1})\times
  B_\rho(\mathbf a_{*,2}),
  \qquad
  \mathbf a_*:=(\mathbf a_{*,1},\mathbf a_{*,2}),
  \qquad
  \mathbf a_\varepsilon:=
  (\mathbf a_{1,\varepsilon},\mathbf a_{2,\varepsilon}).
\]
For $\mathbf b=(\mathbf b_1,\mathbf b_2)\in\Lambda_2(D)$, abbreviate
\[
  \mathcal W(\mathbf b):=
  W_{\boldsymbol\kappa}^{(2)}(\mathbf b_1,\mathbf b_2),
  \qquad W_*:=\mathcal W(\mathbf a_*),
\]
and set
\[
  d_\varepsilon(t):=\sum_{i=1}^2
  \D(\zeta_{i,\varepsilon,t}),
  \qquad
  S_\varepsilon(t):=
  \Cset(\zeta_{1,\varepsilon,t})
  \times\Cset(\zeta_{2,\varepsilon,t})\subset\bR^4.
\]

\begin{lemma}[Conserved Lyapunov identity and initial control]
Under the hypotheses of \cref{thm:two-vortex}, define
\[
  \mathcal L(t)
  :=\sum_{i=1}^2\Pcal(\zeta_{i,\varepsilon,0}^*)
  -\Ecal(\omega_{\varepsilon,t})-W_*.
\]
Then
\begin{equation}\label{eq:two-L-conservation}
  \mathcal L(t)=\mathcal L(0)
  \qquad (t\in\bR),
\end{equation}
and
\begin{align}
  \mathcal L(t)
  ={}&d_\varepsilon(t)
  +\sum_{i=1}^2\Rcal(\zeta_{i,\varepsilon,t})
  +\mathcal I(\zeta_{1,\varepsilon,t},\zeta_{2,\varepsilon,t})
  -W_*.
  \label{eq:two-Lyapunov-split}
\end{align}
Moreover, there exist $\varepsilon_{\rm init},C_{\rm init}>0$, depending
only on $D,\rho$, and $\mathbf a_{*,i},\kappa_i,M_i,R_i$, $i=1,2$, such that
\begin{align}
  \mathcal L(0)
  \leq d_\varepsilon(0)
  +\mathcal W(\mathbf a_\varepsilon)
  -W_*+C_{\rm init}\varepsilon
 \leq C_{\rm init}\eta^{(2)}_{\varepsilon,0}
  \label{eq:two-L-initial}
\end{align}
whenever $0<\varepsilon<\varepsilon_{\rm init}$.
\end{lemma}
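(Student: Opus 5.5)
Conservation \eqref{eq:two-L-conservation} is immediate: $\sum_i\Pcal(\zeta_{i,\varepsilon,0}^*)$ and $W_*$ do not depend on $t$, and $\Ecal(\omega_{\varepsilon,t})=\Ecal(\omega_{\varepsilon,0})$ by conservation of kinetic energy \eqref{def_knc}. For \eqref{eq:two-Lyapunov-split} I will use \cref{rk:decomvor}, which gives $\omega_{\varepsilon,t}=\zeta_{1,\varepsilon,t}-\zeta_{2,\varepsilon,t}$, and then the splitting \eqref{eq:two-energy-split}:
\[
-\Ecal(\omega_{\varepsilon,t})=\sum_{i=1}^2[\Rcal(\zeta_{i,\varepsilon,t})-\Pcal(\zeta_{i,\varepsilon,t})]+\mathcal I(\zeta_{1,\varepsilon,t},\zeta_{2,\varepsilon,t}).
\]
(This splitting comes from polarizing the quadratic form $\frac12\iint G$ and using the symmetry of $G$.) Equimeasurability gives $\zeta_{i,\varepsilon,t}^*=\zeta_{i,\varepsilon,0}^*$, so $\Pcal(\zeta_{i,\varepsilon,0}^*)-\Pcal(\zeta_{i,\varepsilon,t})=\D(\zeta_{i,\varepsilon,t})$. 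Summing over $i$ yields \eqref{eq:two-Lyapunov-split}.

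For \eqref{eq:two-L-initial} I will evaluate \eqref{eq:two-Lyapunov-split} at $t=0$ and estimate each term from above. \cref{lem:initial-robin} applies to each $\zeta_{i,\varepsilon,0}$, with circulation $|\kappa_i|$, center $\mathbf a_{*,i}$, and radius $R_i$. It gives $\Rcal(\zeta_{i,\varepsilon,0})\le\kappa_i^2H(\mathbf a_{i,\varepsilon})+C_{{\rm I},i}\varepsilon$ for $\varepsilon<\varepsilon_{{\rm I},i}$. For the cross term, I set $K:=\overline{B_\rho(\mathbf a_{*,1})}\times\overline{B_\rho(\mathbf a_{*,2})}\Subset\Lambda_2(D)$ and $d_K$ as in the proof of \cref{lem:two-cross-lower}. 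Once $\max_i R_i\varepsilon<d_K/4$, the supports satisfy $\operatorname{supp}\zeta_{i,\varepsilon,0}\subset B_{R_i\varepsilon}(\mathbf a_{i,\varepsilon})$. The product of these balls lies in a fixed compact neighborhood of $K$ inside $\Lambda_2(D)$, on which $G$ is smooth with bounded gradient. The mean value theorem then gives
\[
G(\mathbf x,\mathbf y)\le G(\mathbf a_{1,\varepsilon},\mathbf a_{2,\varepsilon})+C_K(R_1+R_2)\varepsilon
\]
on the supports. Integrating against $\zeta_{1,\varepsilon,0}(\mathbf x)\zeta_{2,\varepsilon,0}(\mathbf y)$, whose total mass is $|\kappa_1\kappa_2|$, gives
\[
\mathcal I\le|\kappa_1\kappa_2|G(\mathbf a_{1,\varepsilon},\mathbf a_{2,\varepsilon})+C\varepsilon.
\]
Since $|\kappa_1\kappa_2|=-\kappa_1\kappa_2$, adding the three bounds produces $\mathcal W(\mathbf a_\varepsilon)+C\varepsilon$. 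This proves the first inequality, with $\varepsilon_{\rm init}$ the minimum of $\varepsilon_{{\rm I},1}$, $\varepsilon_{{\rm I},2}$, and $d_K/(4\max_iR_i)$.

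The second inequality follows by enlarging $C_{\rm init}$ to be at least $1$. The right-hand side $d_\varepsilon(0)+[\mathcal W(\mathbf a_\varepsilon)-W_*]+C_{\rm init}\varepsilon$ is then bounded by $C_{\rm init}$ times the sum of three nonnegative terms. Here $\D\ge0$ by Riesz, and the Kirchhoff--Routh gap is nonnegative by \eqref{eq:two-strict-well}, since $\mathbf a_\varepsilon\in V_\rho$. The only step needing care is the cross-interaction upper bound: it requires keeping both supports uniformly away from the diagonal and from $\partial D$, which is exactly what the choice of $\varepsilon_{\rm init}$ guarantees.
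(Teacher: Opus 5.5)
Your proposal is correct and follows the paper's proof essentially step for step. You get conservation from kinetic-energy conservation, and the split from polarization together with $\zeta_{i,\varepsilon,t}^*=\zeta_{i,\varepsilon,0}^*$; the initial bound comes from \cref{lem:initial-robin} applied to each component plus a mean-value estimate for $G$ on a fixed compact subset of $\Lambda_2(D)$, then taking $C_{\rm init}\geq 1$ and using that the three terms are nonnegative. The only cosmetic difference is that you prove just the upper bound for the cross interaction, which is all that is needed, and you make the compact neighborhood explicit via $d_K$, whereas the paper states a two-sided bound more briefly.
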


\begin{proof}
We first observe that the same measure-preserving flow transports both
components, so
$\zeta_{i,\varepsilon,t}^*=\zeta_{i,\varepsilon,0}^*$.  The kinetic energy
of the signed total vorticity is also conserved; hence $\mathcal L$ is
constant, which proves \eqref{eq:two-L-conservation}.  Replacing each
reference rearrangement by its time-$t$ counterpart
in \eqref{eq:two-energy-split} gives
\eqref{eq:two-Lyapunov-split}.

For $i=1,2$, we apply \cref{lem:initial-robin} with circulation
$|\kappa_i|$, reference point $\mathbf a_{*,i}$, and radius $\rho$; denote
the resulting constants by $\varepsilon_{{\rm I},i}$ and $C_{{\rm I},i}$.
In view of $\overline{V_\rho}\Subset\Lambda_2(D)$, the mean value theorem
also gives $\varepsilon_{\rm X},C_{\rm X}>0$ such that
\[
  |\mathcal I(\zeta_{1,\varepsilon,0},\zeta_{2,\varepsilon,0})
  -|\kappa_1\kappa_2|G(\mathbf a_{1,\varepsilon},
  \mathbf a_{2,\varepsilon})|
  \leq C_{\rm X}\varepsilon
\]
whenever $0<\varepsilon<\varepsilon_{\rm X}$.  Indeed, for sufficiently
small $\varepsilon$, all relevant support products lie in a fixed compact
subset of $\Lambda_2(D)$ on which the first derivatives of $G$ are bounded.
Set
\[
  \varepsilon_{\rm init}
  :=\min\{\varepsilon_{{\rm I},1},\varepsilon_{{\rm I},2},
  \varepsilon_{\rm X}\},
  \qquad
  C_0:=C_{{\rm I},1}+C_{{\rm I},2}+C_{\rm X}.
\]
If $0<\varepsilon<\varepsilon_{\rm init}$, the three initial estimates give
\begin{align*}
  &\sum_{i=1}^2\Rcal(\zeta_{i,\varepsilon,0})
  +\mathcal I(\zeta_{1,\varepsilon,0},\zeta_{2,\varepsilon,0})\\
   \leq&
  \sum_{i=1}^2|\kappa_i|^2H(\mathbf a_{i,\varepsilon})
  +|\kappa_1\kappa_2|G(\mathbf a_{1,\varepsilon},
  \mathbf a_{2,\varepsilon})+C_0\varepsilon\\
  =& \mathcal W(\mathbf a_\varepsilon)+C_0\varepsilon.
\end{align*}
The last equality uses
$|\kappa_i|^2=\kappa_i^2$ and
$|\kappa_1\kappa_2|=-\kappa_1\kappa_2$.
According to \eqref{eq:two-Lyapunov-split}, evaluated at $t=0$, we have
\[
  \mathcal L(0)
  \leq d_\varepsilon(0)+\mathcal W(\mathbf a_\varepsilon)-W_*
  +C_0\varepsilon.
\]
Set $C_{\rm init}:=\max\{1,C_0\}$.  The three quantities
$d_\varepsilon(0)$,
$\mathcal W(\mathbf a_\varepsilon)-W_*$, and $\varepsilon$ are nonnegative;
the second is nonnegative by \eqref{eq:two-strict-well}.  Hence
\[
  d_\varepsilon(0)+\mathcal W(\mathbf a_\varepsilon)-W_*
  +C_{\rm init}\varepsilon
  \leq C_{\rm init}\eta^{(2)}_{\varepsilon,0},
\]
which proves both inequalities in \eqref{eq:two-L-initial}.
\end{proof}

\begin{lemma}[Conditional control inside the trapping well]
\label{lem:two-inside}
There exist $\delta,\varepsilon_{\rm in},C_{\rm two}>0$, depending only on
$D,\rho$, and $\mathbf a_{*,i},\kappa_i,M_i,R_i$, $i=1,2$, such that the
following holds.

If $0<\varepsilon<\varepsilon_{\rm in}$ and, at some time $t$,
\begin{equation}\label{eq:two-inside-hyp}
  S_\varepsilon(t)\subset\overline{V_\rho},
  \qquad d_\varepsilon(t)\leq\delta,
\end{equation}
then
\begin{align}
  &d_\varepsilon(t)
  +\sup_{\mathbf b\in S_\varepsilon(t)}
  [\mathcal W(\mathbf b)-W_*]
  \leq C_{\rm two}\eta^{(2)}_{\varepsilon,0}.
  \label{eq:two-inside}
\end{align}
\end{lemma}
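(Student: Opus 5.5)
This is the two-vortex analogue of \cref{prop:one-inside}. I would obtain it by combining the conserved Lyapunov identity \eqref{eq:two-Lyapunov-split} with the position-energy lower bound of \cref{prop:two-position-lower}, applied on the compact set $K:=\overline{V_\rho}\Subset\Lambda_2(D)$ with $\theta=1/2$.

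\textbf{Choice of constants.} Denote the resulting constants by $\delta_{\rm pos},\varepsilon_{\rm pos},C_{\rm pos}$. Then set
\[
  \delta:=\delta_{\rm pos},\qquad
  \varepsilon_{\rm in}:=\min\{\varepsilon_{\rm pos},\varepsilon_{\rm init}\}.
\]
Here $\varepsilon_{\rm init}$ is the constant from the preceding lemma. All these constants depend only on $D,\rho$ and $\mathbf a_{*,i},\kappa_i,M_i,R_i$, since $K$ is determined by $\rho$ and $\mathbf a_*$.

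\textbf{Lower bound on $\mathcal L(t)$.} Fix $t$ with \eqref{eq:two-inside-hyp} and $0<\varepsilon<\varepsilon_{\rm in}$. As recorded at the start of this section, each $\zeta_{i,\varepsilon,t}$ lies in $\mathscr R_{\varepsilon,|\kappa_i|,M_i,R_i}(D)$. For any $\mathbf b=(\mathbf b_1,\mathbf b_2)\in S_\varepsilon(t)$ we have $\mathbf b_i\in\Cset(\zeta_{i,\varepsilon,t})$, and $\mathbf b\in K$ by hypothesis. Moreover $\D(\zeta_{1,\varepsilon,t})+\D(\zeta_{2,\varepsilon,t})=d_\varepsilon(t)\le\delta_{\rm pos}$. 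So \cref{prop:two-position-lower} gives
\[
  \sum_{i=1}^2\Rcal(\zeta_{i,\varepsilon,t})+\mathcal I(\zeta_{1,\varepsilon,t},\zeta_{2,\varepsilon,t})
  \ge\mathcal W(\mathbf b)-\tfrac12 d_\varepsilon(t)-C_{\rm pos}\varepsilon .
\]
Inserting this into \eqref{eq:two-Lyapunov-split} yields
\[
  \mathcal L(t)+C_{\rm pos}\varepsilon\ge\tfrac12 d_\varepsilon(t)+\mathcal W(\mathbf b)-W_* .
\]

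\textbf{Upper bound and conclusion.} Conservation \eqref{eq:two-L-conservation} and the initial bound \eqref{eq:two-L-initial} give $\mathcal L(t)=\mathcal L(0)\le C_{\rm init}\eta^{(2)}_{\varepsilon,0}$. Since $\varepsilon\le\eta^{(2)}_{\varepsilon,0}$, every term of $\eta^{(2)}_{\varepsilon,0}$ being nonnegative, we obtain
\[
  \tfrac12 d_\varepsilon(t)+\mathcal W(\mathbf b)-W_*\le (C_{\rm init}+C_{\rm pos})\,\eta^{(2)}_{\varepsilon,0}.
\]
Now use $\mathcal W(\mathbf b)-W_*\ge0$ on $\overline{V_\rho}$, which follows from \eqref{eq:two-strict-well}. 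This lets us double the left side harmlessly:
\[
  d_\varepsilon(t)+[\mathcal W(\mathbf b)-W_*]\le 2\bigl(\tfrac12 d_\varepsilon(t)+\mathcal W(\mathbf b)-W_*\bigr).
\]
Taking the supremum over $\mathbf b\in S_\varepsilon(t)$ gives \eqref{eq:two-inside} with $C_{\rm two}:=2(C_{\rm init}+C_{\rm pos})$.

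\textbf{Main obstacle.} There is essentially no new difficulty, since all the hard work sits in \cref{prop:two-position-lower}. The one point to check carefully is that $\overline{V_\rho}$ is a legitimate compact subset of $\Lambda_2(D)$, so that the proposition's constants are uniform. This is guaranteed by the choice of $\rho$ in \cref{thm:two-vortex}.
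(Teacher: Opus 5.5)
Your proof is correct and follows the paper's argument essentially line by line. You apply \cref{prop:two-position-lower} on $K=\overline{V_\rho}$ with $\theta=1/2$, combine it with the Lyapunov identity, conservation and the initial bound, and use $\mathcal W-W_*\geq0$ to absorb the factor $\tfrac12$, arriving at the same constant $C_{\rm two}=2(C_{\rm init}+C_{\rm pos})$. The only difference is that the paper takes the smaller threshold $\delta=\tfrac12\min\{\delta_{\rm pos},c_1|\kappa_1|^2,c_2|\kappa_2|^2\}$; this is not needed for the lemma itself but is used later in the proof of \cref{thm:two-vortex} for the center-set diameter bound, and since shrinking $\delta$ preserves the lemma, your choice $\delta=\delta_{\rm pos}$ is fine here.
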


\begin{proof}
Apply \cref{prop:two-position-lower} with
$K=\overline{V_\rho}$ and $\theta=1/2$, and denote its constants by
$\delta_{\rm pos},\varepsilon_{\rm pos},C_{\rm pos}>0$.  With the
componentwise coercivity constants from \eqref{eq:two-shape}, set
\[
  \delta:=\frac12\min\left\{\delta_{\rm pos},
  c_1|\kappa_1|^2,c_2|\kappa_2|^2\right\}>0,
  \qquad
  \varepsilon_{\rm in}:=
  \min\left\{\varepsilon_{\rm pos},\varepsilon_{\rm init}\right\}.
\]
Suppose that \eqref{eq:two-inside-hyp} holds and let
$\mathbf b=(\mathbf b_1,\mathbf b_2)\in S_\varepsilon(t)$.  Then
$\mathbf b_i\in\Cset(\zeta_{i,\varepsilon,t})$,
$\mathbf b\in\overline{V_\rho}$, and
$d_\varepsilon(t)\leq\delta\leq\delta_{\rm pos}$.  Taking into account the
componentwise transport identities and
$\varepsilon<\varepsilon_{\rm in}\leq\varepsilon_{\rm pos}$, these facts
verify the hypotheses of \cref{prop:two-position-lower}.  Hence
\[
  \sum_{i=1}^2\Rcal(\zeta_{i,\varepsilon,t})
  +\mathcal I(\zeta_{1,\varepsilon,t},\zeta_{2,\varepsilon,t})
  \geq\mathcal W(\mathbf b)-\frac12d_\varepsilon(t)
  -C_{\rm pos}\varepsilon.
\]
According to \eqref{eq:two-Lyapunov-split}, we have
\[
  \mathcal L(t)+C_{\rm pos}\varepsilon
  \geq\frac12d_\varepsilon(t)+\mathcal W(\mathbf b)-W_*.
\]
Moreover,
\eqref{eq:two-L-conservation}, \eqref{eq:two-L-initial}, and
$\varepsilon\leq\eta^{(2)}_{\varepsilon,0}$ give
\[
  \mathcal L(t)+C_{\rm pos}\varepsilon
  =\mathcal L(0)+C_{\rm pos}\varepsilon
  \leq(C_{\rm init}+C_{\rm pos})
  \eta^{(2)}_{\varepsilon,0}.
\]
Taking the supremum over $S_\varepsilon(t)$ and using
$\mathcal W-W_*\geq0$ on $\overline{V_\rho}$, we obtain
\[
  d_\varepsilon(t)+\sup_{\mathbf b\in S_\varepsilon(t)}
  [\mathcal W(\mathbf b)-W_*]
  \leq2(C_{\rm init}+C_{\rm pos})\eta^{(2)}_{\varepsilon,0}.
\]
Thus \eqref{eq:two-inside} holds with
$C_{\rm two}:=2(C_{\rm init}+C_{\rm pos})$.
\end{proof}

\subsection{Continuation and global propagation}

\begin{proof}[Proof of \cref{thm:two-vortex}]
Let $\delta$, $\varepsilon_{\rm in}$, and $C_{\rm two}$ be as in
\cref{lem:two-inside}.  We apply the set-valued first-exit argument to the
product of the two center sets.

\medskip\noindent\textbf{Step 1: continuity and compactness properties of the relevant quantities.} By \cite[Lemma~12(iii)]{Burton2005}, each component is continuous
from $\bR$ to $L^2(D)$.  As in Step~2 of the proof of \cref{thm:main},
equimeasurability and \eqref{eq:P-Lipschitz} imply that $d_\varepsilon$ is
continuous, while boundedness of $D$ gives componentwise $L^1$ continuity.
Hence \cref{lem:centers}(vi) shows that both center maps are upper
semicontinuous with nonempty compact values.  Their finite product
$S_\varepsilon$ has the same properties.

\medskip\noindent\textbf{Step 2: control the diameter of the product center
set.} If $d_\varepsilon(t)\leq\delta$, then, in view of
\eqref{eq:two-shape} and the definition of $\delta$, for $i=1,2$ we have
\[
  \A(\zeta_{i,\varepsilon,t})^2
  \leq\frac{\D(\zeta_{i,\varepsilon,t})}{c_i}
  \leq\frac{d_\varepsilon(t)}{c_i}
  \leq\frac{|\kappa_i|^2}{2}.
\]
In particular,
$\A(\zeta_{i,\varepsilon,t})<|\kappa_i|$, so the componentwise
center-diameter estimate applies.  For any
$\mathbf b,\mathbf c\in S_\varepsilon(t)$,
\[
  |\mathbf b-\mathbf c|^2
  =|\mathbf b_1-\mathbf c_1|^2+|\mathbf b_2-\mathbf c_2|^2
  \leq4(R_1^2+R_2^2)\varepsilon^2.
\]
Set $R_{\rm pair}:=2\sqrt{R_1^2+R_2^2}$.  Then
\begin{equation}\label{eq:two-set-diameter}
  \operatorname{diam}S_\varepsilon(t)
  \leq R_{\rm pair}\varepsilon
  \quad\text{whenever }d_\varepsilon(t)\leq\delta.
\end{equation}

\medskip\noindent\textbf{Step 3: set up the first-exit argument.} We use
\cref{lem:first-exit} with
$V=V_\rho$, $w=\mathcal W$,
$\mathbf p_*=\mathbf a_*$, $d=d_\varepsilon$, and
$S=S_\varepsilon$, and denote its thresholds by $\eta_*,r_*>0$.
Recall that \eqref{eq:two-strict-well} gives the required unique minimum point
of $w$ on $\overline{V_\rho}$.  The conditional estimate required by that
lemma is precisely \eqref{eq:two-inside}.

\medskip\noindent\textbf{Step 4: choose thresholds and exclude a first exit.}
Following Steps~3--5 in the proof of \cref{thm:main}, choose $\eta_0>0$ so
small that
\begin{equation}\label{eq:two-threshold-choice}
  C_{\rm two}\eta_0<\min\{\delta,\eta_*\}
\end{equation}
and
\[
  K_0:=\left\{\mathbf b\in\overline{V_\rho}:
  \mathcal W(\mathbf b)-W_*\leq\eta_0\right\}\Subset V_\rho.
\]
Such a choice is possible by \eqref{eq:two-strict-well}.  Set
\[
  d_{\rm pair}:=\operatorname{dist}
  (K_0,\bR^4\setminus V_\rho)>0
\]
and choose $\varepsilon_0>0$ such that
\[
  \varepsilon_0<\min\left\{
  \varepsilon_{\rm in},
  \frac{r_*}{R_{\rm pair}},
  \frac{d_{\rm pair}}{R_{\rm pair}}
  \right\}.
\]

Fix $0<\varepsilon<\varepsilon_0$ and assume
\eqref{eq:two-quantitative-smallness}.  Since the three terms in
$\eta^{(2)}_{\varepsilon,0}$ are nonnegative and $C_{\rm two}\geq1$,
\[
  \mathbf a_\varepsilon\in K_0,
  \qquad
  d_\varepsilon(0)\leq\eta_0<\delta.
\]
Applying \eqref{eq:initial-center-close} to both initial components gives
\[
  \sup_{\mathbf b\in S_\varepsilon(0)}
  |\mathbf b-\mathbf a_\varepsilon|
  \leq R_{\rm pair}\varepsilon<d_{\rm pair},
\]
and hence $S_\varepsilon(0)\subset V_\rho$.  Moreover, whenever
$d_\varepsilon(t)\leq\delta$, \eqref{eq:two-set-diameter} gives
\[
  \operatorname{diam}S_\varepsilon(t)
  \leq R_{\rm pair}\varepsilon<r_*.
\]
If also $S_\varepsilon(t)\subset\overline{V_\rho}$, then
\eqref{eq:two-inside} and \eqref{eq:two-threshold-choice} yield
\[
  d_\varepsilon(t)
  +\sup_{\mathbf b\in S_\varepsilon(t)}
  [\mathcal W(\mathbf b)-W_*]
  \leq C_{\rm two}\eta^{(2)}_{\varepsilon,0}<\eta_*.
\]
Thus all the hypotheses of \cref{lem:first-exit} are satisfied, exactly as in
the one-vortex argument.  It follows that
$S_\varepsilon(t)\subset V_\rho$ and $d_\varepsilon(t)<\delta$ for every
$t\in\bR$.  This proves \eqref{eq:two-center-confinement}, and the global
validity of \eqref{eq:two-inside} gives \eqref{eq:two-quantitative} with
$C=C_{\rm two}$.
\end{proof}

\subsection{Proof of the qualitative two-vortex theorem}

\begin{proof}[Proof of \cref{thm:intro-two}]
Choose $\rho>0$ so that
$B_\rho(\mathbf a_{*,1})\times B_\rho(\mathbf a_{*,2})
\Subset\Lambda_2(D)$ and
\eqref{eq:two-strict-well} holds.  According to
\eqref{eq:localized-upper} and the assumptions on the initial data, we have
\[
  \D(\zeta_{i,\varepsilon,0})
  \leq C\A(\zeta_{i,\varepsilon,0})\longrightarrow0,
  \qquad i=1,2.
\]
Since the initial centers converge to $\mathbf a_*$ and $\mathcal W$ is
continuous, $\eta^{(2)}_{\varepsilon,0}\to0$.  Hence, for all sufficiently
small $\varepsilon$, the hypotheses of \cref{thm:two-vortex} are satisfied.
Combining \eqref{eq:two-quantitative} with \eqref{eq:two-shape} gives
\[
  \sup_{t\in\bR}\A(\zeta_{i,\varepsilon,t})^2
  \leq C\eta^{(2)}_{\varepsilon,0}\longrightarrow0,
  \qquad i=1,2,
\]
which proves \eqref{eq:two-qualitative}.

Moreover, by \eqref{eq:two-quantitative}, we have
\[
  \sup_{t\in\bR}\ \sup_{\mathbf b\in S_\varepsilon(t)}
  [\mathcal W(\mathbf b)-W_*]
  \leq C\eta^{(2)}_{\varepsilon,0}\longrightarrow0.
\]
In view of \eqref{eq:two-center-confinement}, every pair in $S_\varepsilon(t)$
belongs to $V_\rho$.  Since $\overline{V_\rho}$ is compact and $\mathbf a_*$
is its unique minimizer of $\mathcal W$ by \eqref{eq:two-strict-well}, we can repeat the
compactness argument in the proof of \cref{thm:intro-one}, with
$H$ and $\overline{B_\rho(\mathbf a_*)}$ replaced by $\mathcal W$ and
$\overline{V_\rho}$, to obtain
\[
  \sup_{t\in\bR}\ \sup_{\mathbf b\in S_\varepsilon(t)}
  |\mathbf b-\mathbf a_*|\longrightarrow0.
\]
This proves \eqref{eq:two-center-limit}.
\end{proof}

\section{Further discussion}
\label{sec:further-discussion}

\subsection{The same-sign obstruction}

The preceding argument does not extend directly from one vortex or an
opposite-sign pair to a general vortex system, because the cross interactions
enter the conserved Lyapunov quantity with different signs.

Fix a time and suppress it from the notation.  Consider $N\geq2$ signed
vortices with nonzero circulations $\kappa_i$.  For $i=1,\ldots,N$, set
$\sigma_i:=\operatorname{sgn}\kappa_i\in\{-1,1\}$, and let
$\zeta_{i,\varepsilon}\in L^\infty(D)$ be nonnegative with
\[
  \int_D\zeta_{i,\varepsilon}\,\mathrm d\mathbf x=|\kappa_i|.
\]
Set
\[
  \omega_\varepsilon:=\sum_{i=1}^N\sigma_i\zeta_{i,\varepsilon},
  \qquad
  \mathcal I_{ij}:=\mathcal I(\zeta_{i,\varepsilon},
  \zeta_{j,\varepsilon}).
\]
The quadratic energy then has the exact decomposition
\begin{equation}\label{eq:many-energy-split}
 \sum_{i=1}^N\Pcal(\zeta_{i,\varepsilon}^*)
  -\Ecal(\omega_\varepsilon)
 =\sum_{i=1}^N\D(\zeta_{i,\varepsilon})
  +\sum_{i=1}^N\Rcal(\zeta_{i,\varepsilon})
  -\sum_{1\leq i<j\leq N}\sigma_i\sigma_j\mathcal I_{ij}.
\end{equation}
Subtracting the value of $W_{\boldsymbol\kappa}^{(N)}$ at a reference
configuration only adds a state-independent constant and therefore does not
alter the signs in \eqref{eq:many-energy-split}.

For an opposite-sign pair, the cross term is $+\mathcal I_{ij}$, so the
positivity of $G$ allows the tail interactions to be discarded in a lower
bound, as in \eqref{eq:two-cross-lower}.  For a same-sign pair, however, the
term is $-\mathcal I_{ij}$, and a lower bound for the Lyapunov quantity
requires an upper bound for the interaction.
For optimal centers
$\mathbf b_i\in\Cset(\zeta_{i,\varepsilon})$ and
$\mathbf b_j\in\Cset(\zeta_{j,\varepsilon})$, one would need, for every
prescribed $\theta>0$, a constant $C_\theta>0$ and an estimate of the form
\begin{equation}\label{eq:same-sign-upper-needed}
  \mathcal I_{ij}
  \leq|\kappa_i\kappa_j|G(\mathbf b_i,\mathbf b_j)
  +\theta[\D(\zeta_{i,\varepsilon})
  +\D(\zeta_{j,\varepsilon})]+C_\theta\varepsilon
\end{equation}
uniformly while the center configuration ranges over a fixed compact
subset of $\Lambda_N(D)$.

The componentwise tail bound does not provide
\eqref{eq:same-sign-upper-needed}: it controls the remote mass but not its
distance from another component.  Indeed, a tail mass $m_\varepsilon$ lying
within $O(\varepsilon)$ of the $j$th vortex can contribute
$O(m_\varepsilon\ln(1/\varepsilon))$ to the cross interaction, which may be
of the same order as the self-interaction deficit of the $i$th component.
Taking $L\sim\varepsilon^{-1}$ in the tail estimate gives only
\[
  m_\varepsilon\ln(1/\varepsilon)
  \leq C\D(\zeta_{i,\varepsilon}),
\]
not the arbitrarily small coefficient required in
\eqref{eq:same-sign-upper-needed}.

Every collection of at least three nonzero circulations contains a same-sign
pair.  Hence the present componentwise estimates cannot control all shape
deficits together with the Kirchhoff--Routh gap.  An extension would require
a joint estimate of the form \eqref{eq:same-sign-upper-needed}, or an
additional mechanism keeping thin filaments away from the other vortices.

\subsection{Why the \texorpdfstring{$L^1$}{L1} orbital distance is natural}

The use of the $L^1$ norm in the orbital asymmetry is natural from both the
scaling of concentrated vortices and the Yan--Yao inequality.  To make the
scaling transparent, for $1\leq p<\infty$ set
\[
  \mathcal A_p(f):=\inf_{\mathbf b\in\bR^2}
  \|f(\cdot+\mathbf b)-f^*\|_p.
\]
Thus $\mathcal A_1=\A$.  If
\[
  f_\varepsilon(\mathbf x)
  =\varepsilon^{-2}g(\mathbf x/\varepsilon),
  \qquad \|g\|_1=\kappa,
\]
then
\[
  \mathcal A_p(f_\varepsilon)
  =\varepsilon^{2/p-2}\mathcal A_p(g).
\]
Thus the $L^1$ distance is invariant under the vortex-core scaling, whereas
an $L^p$ distance with $p>1$ must be multiplied by
$\varepsilon^{2-2/p}$ to become scale invariant.  The logarithmic
interaction deficit has the same invariance: since
\[
  \Pcal(f_\varepsilon)
  =\Pcal(g)+\frac{\kappa^2}{4\pi}\ln\frac1\varepsilon,
\]
the additional logarithmic term cancels between $f_\varepsilon$ and
$f_\varepsilon^*$, and hence
$\D(f_\varepsilon)=\D(g)$.

This scale matching is reflected directly in
\cref{lem:yan-yao-log}.  Under the assumptions
$R_*=R\varepsilon$, $\|f\|_1=\kappa$, and
$\|f\|_\infty\leq M\varepsilon^{-2}$, its prefactor satisfies
\[
  R_*^{-2}\|f\|_1\|f\|_\infty^{-1}
  \geq \frac{\kappa}{MR^2}.
\]
The factor $\varepsilon^{-2}$ coming from the support radius is therefore
exactly canceled by the factor $\varepsilon^2$ coming from the
$L^\infty$ bound, which gives the scale-uniform coercive estimate
$\D(f)\geq c_0\A(f)^2$.  Although normalized $L^p$ control can subsequently
be recovered by interpolation, it is weaker and uses the additional
$L^\infty$ bound.  For example, for $1<p<\infty$ and an $L^1$-optimal
translation center $\mathbf b$,
\[
  \varepsilon^{2-2/p}\mathcal A_p(f)
  \leq
  \varepsilon^{2-2/p}
  \|f(\cdot+\mathbf b)-f^*\|_p
  \leq M^{1-1/p}\A(f)^{1/p}
  \leq C\D(f)^{1/(2p)}.
\]

\subsection{Optimal translation centers versus mass centers}
\label{subsec:mass-centers}

For a nonnegative, compactly supported function
$f:\bR^2\to\bR$ with total mass $\kappa$, recall that its mass center
$\mathbf X_f$ is defined by \eqref{eq:mass-center}. When $f$ is
transported by the Euler flow, $\mathbf X_{f_t}$ varies continuously
in time and provides a natural single-valued position variable. It is
therefore natural to ask whether the set-valued optimal-center map
$\Cset(f)$ could be replaced by $\mathbf X_f$, thereby simplifying
the proof. Unfortunately, this replacement does not preserve the estimates required in our proof.  First, the tail estimate
\eqref{eq:tail} may fail if an optimal center $\mathbf b$ is replaced by the
mass center; see \cref{rem:tail-mass-center}.  The second reason concerns the regular-energy estimate \eqref{eq:robin-lower}.  By
\eqref{eq:optimal-mass-center}, shape coercivity, and the local Lipschitz
continuity of $H$, whenever $\mathbf X_f$ and $\mathbf b$ lie in the same
compact trapping region,
\begin{align*}
  \Rcal(f)
  &\geq \kappa^2H(\mathbf b)-\theta\D(f)-C_{\rm R}\varepsilon\\
  &\geq \kappa^2H(\mathbf X_f)-\theta\D(f)
  -C_{\rm R}\varepsilon-C\D(f)^{1/2}.
\end{align*}
For small $\D(f)$, the additional $\D(f)^{1/2}$ term cannot be absorbed into
 the $O(\D(f))$ term and therefore does not preserve the form of
\eqref{eq:inside-estimate}.  The same replacement in
\eqref{eq:two-position-lower} produces an analogous
$O(\D(f_1)^{1/2}+\D(f_2)^{1/2})$ error in the Kirchhoff--Routh term.

Choi, Jeong, and Yao \cite{ChoiJeongYao2024} use the mass center as their position variable rather than the set of optimal translation centers. Their setting and argument, however, differ from ours. In fact, their odd--odd
whole-plane problem reduces to a nonnegative vortex
$\rho(t)=\omega(t)\mathbf1_Q$ of unit mass in the first quadrant $Q$, and they
use
\[
  \mathbf X(t):=\int_Q\mathbf x\rho(\mathbf x,t)\,\mathrm d\mathbf x
\]
as a separate position observable.  Their shape estimate still takes the
translation-invariant form
\[
  \inf_{\mathbf a\in\bR^2}
  \|\rho(t)-\rho_0^*(\cdot-\mathbf a)\|_1\ll1,
\]
whereas their position estimate is
\[
  \operatorname{dist}(\mathbf X(t),\mathcal O_{\rm pv}(\mathbf p))\ll1.
\]
Here $\mathcal O_{\rm pv}(\mathbf p)$ denotes the corresponding odd--odd
point-vortex orbit; see \cite[Theorem~1.2]{ChoiJeongYao2024}.
In particular, the minimizing translation $\mathbf a$ is not identified with
$\mathbf X(t)$.  Their argument instead uses monotonicity properties of $\mathbf X(t)$ and
kernel estimates specific to the odd--odd configuration, neither of which is available in a general bounded domain.

\appendix

\section{Optimal translation centers}\label{app:centers}

In this appendix, we establish the properties of the full set of optimal
translation centers used in the main proofs.

\begin{lemma}[Properties of optimal translation centers]\label{lem:centers}
Let $f\in L^1(\bR^2)$ be nonzero and nonnegative, and set
$\kappa:=\|f\|_1$.  Then $\Cset(f)$ is nonempty and compact, and
$\A(f)<2\kappa$.  Moreover, the following statements hold:
\begin{enumerate}[label=\textup{(\roman*)}]
\item if $\operatorname{supp}f^*\subset B_r(\mathbf0)$ for some $r>0$,
then every $\mathbf b\in\Cset(f)$ satisfies
\begin{equation}\label{eq:outside-optimal-ball}
  \int_{\bR^2\setminus B_r(\mathbf b)}f
  \,\mathrm d\mathbf x
  \leq\frac12\A(f);
\end{equation}
\item if, for some $r,s>0$ and $\mathbf a\in\bR^2$,
\[
  \operatorname{supp}f^*\subset B_r(\mathbf0),
  \qquad
  \operatorname{supp}f\subset B_s(\mathbf a),
\]
then
\begin{equation}\label{eq:initial-center-close}
  |\mathbf b-\mathbf a|\leq r+s
  \qquad(\mathbf b\in\Cset(f));
\end{equation}
moreover, the mass center $\mathbf X_f$ defined by \eqref{eq:mass-center} satisfies
\begin{equation}\label{eq:optimal-mass-center}
  |\mathbf b-\mathbf X_f|
  \leq\frac{r+2s}{\kappa}\A(f)
  \qquad(\mathbf b\in\Cset(f));
\end{equation}
\item if $\operatorname{supp}f^*\subset B_r(\mathbf0)$ for some $r>0$
and $\A(f)<\kappa$, then
\begin{equation}\label{eq:center-diameter}
  \operatorname{diam}\Cset(f)\leq2r;
\end{equation}
\item for any nonzero, nonnegative $f,g\in L^1(\bR^2)$,
\[
  |\A(f)-\A(g)|\leq2\|f-g\|_1;
\]
\item the set-valued map $f\mapsto\Cset(f)$ has a closed graph with respect
to strong $L^1$ convergence: if $f_n,f\in L^1(\bR^2)$ are nonzero and
nonnegative, then
\[
  f_n\to f\ \text{in }L^1(\bR^2),\qquad
  \mathbf b_n\to\mathbf b\ \text{in }\bR^2,\qquad
  \mathbf b_n\in\Cset(f_n)
  \quad\Longrightarrow\quad
  \mathbf b\in\Cset(f);
\]
\item the set-valued map $f\mapsto\Cset(f)$ is upper semicontinuous\footnote{See
\cite[Section~1.4]{AubinFrankowska2009} for this standard definition.} with
respect to strong $L^1$ convergence: for every nonzero, nonnegative
$f\in L^1(\bR^2)$ and every open set $O\subset\bR^2$ with
$\Cset(f)\subset O$, there exists $\delta>0$ such that
\[
  g\in L^1(\bR^2),\quad g\geq0,\quad g\not\equiv0,\quad
  \|g-f\|_1<\delta
  \quad\Longrightarrow\quad
  \Cset(g)\subset O.
\]
\end{enumerate}
\end{lemma}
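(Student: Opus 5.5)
The plan is to study the continuous map $\Phi_f(\mathbf b):=\|f(\cdot+\mathbf b)-f^*\|_1$ on $\bR^2$. By continuity of translation in $L^1$ and the triangle inequality, $\Phi_f$ is Lipschitz-free but uniformly continuous. As $|\mathbf b|\to\infty$ the supports of $f(\cdot+\mathbf b)$ and $f^*$ essentially separate, so $\Phi_f(\mathbf b)\to\|f\|_1+\|f^*\|_1=2\kappa$. On the other hand, $\Phi_f(\mathbf b)<2\kappa$ for some $\mathbf b$. Indeed, $\int\min\{f(\cdot+\mathbf b),f^*\}$ averaged over $\mathbf b$ in a large ball is positive, since $f*\check f^*$-type overlap integrals are positive somewhere, and $\|u-v\|_1=\|u\|_1+\|v\|_1-2\int\min\{u,v\}$ for $u,v\ge0$. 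Hence $\A(f)<2\kappa$, the infimum is attained on a bounded region, and $\Cset(f)$ is a nonempty closed bounded set.

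For (i), I would use the identity $\|u-v\|_1=2\int(u-v)_+$ when $\|u\|_1=\|v\|_1$. Take $u=f(\cdot+\mathbf b)$ and $v=f^*$. Then $f^*$ vanishes outside $B_r(\mathbf0)$, so $\int_{\bR^2\setminus B_r(\mathbf 0)}u\le\int(u-v)_+=\tfrac12\A(f)$, which is \eqref{eq:outside-optimal-ball} after translating.

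For (ii), if $|\mathbf b-\mathbf a|>r+s$, then $B_r(\mathbf b)\cap B_s(\mathbf a)=\varnothing$. By (i) the whole mass $\kappa$ would then be at most $\tfrac12\A(f)<\kappa$, a contradiction.

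For the mass-center bound \eqref{eq:optimal-mass-center}, the mass center of $f^*$ is $\mathbf0$, so
\[
\kappa(\mathbf X_f-\mathbf b)=\int \mathbf x\,[f(\mathbf x+\mathbf b)-f^*(\mathbf x)]\,\mathrm d\mathbf x .
\]
The integrand is supported in $B_r(\mathbf0)\cup B_s(\mathbf a-\mathbf b)$. Using $|\mathbf a-\mathbf b|\le r+s$ from \eqref{eq:initial-center-close}, it lies in $B_{r+2s}(\mathbf0)$. Hence the right side is bounded by $(r+2s)\A(f)$.

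For (iii), take $\mathbf b,\mathbf c\in\Cset(f)$. By (i), $f$ has mass greater than $\kappa-\tfrac12\A(f)>\kappa/2$ in each of $B_r(\mathbf b)$ and $B_r(\mathbf c)$. If these balls were disjoint, the total mass would exceed $\kappa$, so $|\mathbf b-\mathbf c|\le2r$.

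For (iv), I would use $\|f^*-g^*\|_1\le\|f-g\|_1$, the nonexpansivity of rearrangement. Comparing $\Phi_f$ and $\Phi_g$ at the same $\mathbf b$ gives $|\Phi_f-\Phi_g|\le2\|f-g\|_1$ uniformly, and taking infima yields the claim.

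For (v), the same uniform bound $\Phi_{f_n}\to\Phi_f$ uniformly, together with equicontinuity in $\mathbf b$ from continuity of translation, passes $\Phi_{f_n}(\mathbf b_n)=\A(f_n)$ to the limit $\Phi_f(\mathbf b)=\A(f)$.

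For (vi), I argue by contradiction. Suppose there are $g_n\to f$ and $\mathbf b_n\in\Cset(g_n)\setminus O$. The key step, and the main obstacle, is uniform boundedness of $\mathbf b_n$. For this I would use $\Phi_{g_n}(\mathbf b_n)=\A(g_n)\to\A(f)<2\kappa$ together with the uniform decay of the overlap: for $|\mathbf b|$ large, $\Phi_f(\mathbf b)>\A(f)+\eta$ for some fixed $\eta>0$, and this transfers to $g_n$ via the uniform bound from (iv). Compactness then gives a subsequence $\mathbf b_n\to\mathbf b\notin O$. By (v), $\mathbf b\in\Cset(f)\subset O$, a contradiction.
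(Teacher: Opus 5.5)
Your proposal is correct and follows essentially the same route as the paper: the same function $\Phi_f$, the same positive-overlap (Fubini) argument for $\A(f)<2\kappa$ together with $\Phi_f\to2\kappa$ at infinity, the same uniform bound $|\Phi_f-\Phi_g|\le2\|f-g\|_1$ driving (iv)--(vi) (including boundedness of the centers in (vi)), and the same mass-splitting arguments for (iii) and the mass-center estimate. The only differences are cosmetic and both valid. You prove (i) via the identity $\|u-v\|_1=2\int(u-v)_+$ for equal-mass nonnegative functions, where the paper splits the integral inside and outside $B_r(\mathbf 0)$. You obtain $|\mathbf b-\mathbf a|\le r+s$ from (i) combined with $\A(f)<2\kappa$, where the paper shows that $\int\min\{f(\cdot+\mathbf b),f^*\}=\kappa-\tfrac12\A(f)>0$ forces the two supports to overlap.
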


\begin{proof}
Set
\[
  \Phi_f(\mathbf b)=\|f(\cdot+\mathbf b)-f^*\|_1,
  \qquad
  I_f(\mathbf b)=\int_{\bR^2}\min\{f(\mathbf x+\mathbf b),
  f^*(\mathbf x)\}\,\mathrm d\mathbf x.
\]
Then
\begin{equation}\label{relapi0}
  \A(f)=\inf_{\mathbf b\in\bR^2}\Phi_f(\mathbf b).
\end{equation}
In view of the identity $|a-b|=a+b-2\min\{a,b\}$ for $a,b\geq0$,
\begin{equation}\label{relapi}
  \Phi_f(\mathbf b)=2\kappa-2I_f(\mathbf b).
\end{equation}
By the strong continuity of translations in $L^1$, the function
$\Phi_f:\bR^2\to\mathbb R$ is continuous.

We first prove that
\begin{equation}\label{l2ka}
  \A(f)<2\kappa.
\end{equation}
By Fubini's theorem and a change of variables,
\[
  \int_{\bR^2}\int_{\bR^2}
  f(\mathbf x+\mathbf b)f^*(\mathbf x)
  \,\mathrm d\mathbf x\,\mathrm d\mathbf b=\kappa^2>0.
\]
Thus there exists $\mathbf b\in\bR^2$ such that $I_f(\mathbf b)>0$.
Equations \eqref{relapi0} and \eqref{relapi} then give \eqref{l2ka}.

Next, we show that
\begin{equation}\label{l3ka}
  \Phi_f(\mathbf b)\to2\kappa
  \qquad\text{as }|\mathbf b|\to\infty.
\end{equation}
To this end, for any $\varepsilon>0$, choose
$R>0$ such that
\[
  \int_{\bR^2\setminus B_R(\mathbf0)}f\,\mathrm d\mathbf x
  <\frac{\varepsilon}{2},
  \qquad
  \int_{\bR^2\setminus B_R(\mathbf0)}f^*\,\mathrm d\mathbf x
  <\frac{\varepsilon}{2}.
\]
If $|\mathbf b|>2R$, then $B_R(\mathbf0)$ and
$B_R(\mathbf0)-\mathbf b$ are disjoint, and hence
\[
  \bR^2
  =\{\mathbf x:\mathbf x+\mathbf b\notin B_R(\mathbf0)\}
  \cup\bigl(\bR^2\setminus B_R(\mathbf0)\bigr).
\]
It follows that
\begin{align*}
  I_f(\mathbf b)
  &\leq\int_{\{\mathbf x:\mathbf x+\mathbf b\notin B_R(\mathbf0)\}}
  \min\{f(\mathbf x+\mathbf b),f^*(\mathbf x)\}
  \,\mathrm d\mathbf x\\
  &\quad+\int_{\bR^2\setminus B_R(\mathbf0)}
  \min\{f(\mathbf x+\mathbf b),f^*(\mathbf x)\}
  \,\mathrm d\mathbf x\\
  &\leq
  \int_{\{\mathbf x:\mathbf x+\mathbf b\notin B_R(\mathbf0)\}}
  f(\mathbf x+\mathbf b)\,\mathrm d\mathbf x
  +\int_{\bR^2\setminus B_R(\mathbf0)}f^*(\mathbf x)\,\mathrm d\mathbf x\\
  &=\int_{\bR^2\setminus B_R(\mathbf0)}f(\mathbf y)\,\mathrm d\mathbf y
  +\int_{\bR^2\setminus B_R(\mathbf0)}f^*(\mathbf x)\,\mathrm d\mathbf x\\
  &<\varepsilon.
\end{align*}
Thus $I_f(\mathbf b)\to0$ as $|\mathbf b|\to\infty$, and
\eqref{l3ka} follows from \eqref{relapi}.  By \eqref{l2ka}, \eqref{l3ka}, and the continuity of $\Phi_f$, we deduce that $\Cset(f)$, the set of global minimizers of $\Phi_f$, is nonempty and compact.

For part~(i), fix $\mathbf b\in\Cset(f)$ and write
$\widetilde f=f(\cdot+\mathbf b)$.  Set
\[
  m:=\int_{\bR^2\setminus B_r(\mathbf b)}
  f\,\mathrm d\mathbf x
  =\int_{\bR^2\setminus B_r(\mathbf0)}
  \widetilde f\,\mathrm d\mathbf x.
\]
Since $f^*$ is supported in
$B_r(\mathbf0)$,
\[
  \int_{B_r(\mathbf0)}(f^*-\widetilde f)\,\mathrm d\mathbf x=\kappa-\int_{B_r(\mathbf0)}\widetilde f\,\mathrm d\mathbf x
  =\kappa-(\kappa-m)=m,
\]
which implies
\[\int_{B_r(\mathbf0)}|f^*-\widetilde f|\,\mathrm d\mathbf x\geq m.\]
On the other hand,
\[
\int_{\bR^2\setminus B_r(\mathbf0)}|f^*-\widetilde f| \,\mathrm d\mathbf x=\int_{\bR^2\setminus B_r(\mathbf0)} \widetilde f \,\mathrm d\mathbf x=m.
\]
Therefore
$\A(f)=\|\widetilde f-f^*\|_1\geq2m$, which proves
\eqref{eq:outside-optimal-ball}.

For part~(ii), fix $\mathbf b\in\Cset(f)$.  By \eqref{relapi} and
\eqref{l2ka},
\[
  I_f(\mathbf b)
  =\kappa-\frac12\Phi_f(\mathbf b)
  =\kappa-\frac12\A(f)>0.
\]
On the other hand, the support hypotheses give
\[
  \operatorname{supp}f(\cdot+\mathbf b)
  \subset B_s(\mathbf a-\mathbf b),
  \qquad
  \operatorname{supp}f^*\subset B_r(\mathbf0).
\]
Consequently,
\[
  B_s(\mathbf a-\mathbf b)\cap B_r(\mathbf0)\neq\varnothing.
\]
For any $\mathbf x$ in this intersection,
\[
  |\mathbf a-\mathbf b|
  \leq|\mathbf a-\mathbf b-\mathbf x|+|\mathbf x|
  <s+r,
\]
which proves \eqref{eq:initial-center-close}.
For the mass-center estimate, we set
$\widetilde f=f(\cdot+\mathbf b)$.  Since $f^*$ is radial,
\[
  \int_{\bR^2}\mathbf x f^*(\mathbf x)\,\mathrm d\mathbf x=\mathbf0,
  \qquad
  \int_{\bR^2}\mathbf x\widetilde f(\mathbf x)\,\mathrm d\mathbf x
  =\kappa(\mathbf X_f-\mathbf b).
\]
Moreover, according to \eqref{eq:initial-center-close}, we have
\[
  \operatorname{supp}\widetilde f
  \subset B_s(\mathbf a-\mathbf b)
  \subset B_{r+2s}(\mathbf0),
\]
and $f^*$ is supported in the same ball.  Therefore
\[
  \kappa|\mathbf X_f-\mathbf b|
  \leq\int_{\bR^2}|\mathbf x|
  |\widetilde f(\mathbf x)-f^*(\mathbf x)|\,\mathrm d\mathbf x
  \leq(r+2s)\A(f),
\]
which proves \eqref{eq:optimal-mass-center}.

For part~(iii), take $\mathbf b_1,\mathbf b_2\in\Cset(f)$.  By
\eqref{eq:outside-optimal-ball} and the assumption $\A(f)<\kappa$,
\[
  \int_{B_r(\mathbf b_i)}f\,\mathrm d\mathbf x
  =\kappa-\int_{\bR^2\setminus B_r(\mathbf b_i)}f\,\mathrm d\mathbf x
  \geq\kappa-\frac12\A(f)>\frac\kappa2,
  \qquad i=1,2.
\]
We claim that $B_r(\mathbf b_1)\cap B_r(\mathbf b_2)\neq\varnothing$. In fact, if the two balls were disjoint, then
\[
  \kappa
  \geq\int_{B_r(\mathbf b_1)\cup B_r(\mathbf b_2)}f\,\mathrm d\mathbf x
  =\sum_{i=1}^2\int_{B_r(\mathbf b_i)}f\,\mathrm d\mathbf x
  >\kappa,
\]
a contradiction.
Taking some $\mathbf x\in B_r(\mathbf b_1)\cap B_r(\mathbf b_2)$, we have
\[
  |\mathbf b_1-\mathbf b_2|
  \leq|\mathbf b_1-\mathbf x|+|\mathbf x-\mathbf b_2|<2r.
\]
Since $\mathbf b_1$ and $\mathbf b_2$ were arbitrary, this proves
\eqref{eq:center-diameter}.

For part~(iv), take nonzero, nonnegative $f,g\in L^1(\bR^2)$ and fix
$\mathbf c\in\bR^2$.  The reverse triangle inequality and the $L^1$
contraction property of symmetric decreasing rearrangement
\cite[Theorem~3.5]{LiebLoss2001} give
\begin{align*}
  |\Phi_f(\mathbf c)-\Phi_g(\mathbf c)|
  &\leq\|f(\cdot+\mathbf c)-g(\cdot+\mathbf c)\|_1
  +\|f^*-g^*\|_1\\
  &\leq2\|f-g\|_1.
\end{align*}
Consequently,
\[
  \A(f)=\inf_{\mathbf c\in\bR^2}\Phi_f(\mathbf c)
  \leq\inf_{\mathbf c\in\bR^2}
  \bigl(\Phi_g(\mathbf c)+2\|f-g\|_1\bigr)
  =\A(g)+2\|f-g\|_1.
\]
Interchanging $f$ and $g$ yields
\[
  |\A(f)-\A(g)|\leq2\|f-g\|_1.
\]

For part~(v), suppose that $f_n$ and $f$ are nonzero and nonnegative and
\[
  f_n\to f\ \text{in }L^1(\bR^2),
  \qquad
  \mathbf b_n\in\Cset(f_n),
  \qquad
  \mathbf b_n\to\mathbf b.
\]
By the strong continuity of translations in $L^1$,
\[
  \|f_n(\cdot+\mathbf b_n)-f(\cdot+\mathbf b)\|_1
  \leq\|f_n-f\|_1
  +\|f(\cdot+\mathbf b_n)-f(\cdot+\mathbf b)\|_1
  \longrightarrow0.
\]
The rearrangement contraction also gives
\[
  \|f_n^*-f^*\|_1\leq\|f_n-f\|_1\longrightarrow0.
\]
Hence
\[
  |\Phi_{f_n}(\mathbf b_n)-\Phi_f(\mathbf b)|
  \leq\|f_n(\cdot+\mathbf b_n)-f(\cdot+\mathbf b)\|_1
  +\|f_n^*-f^*\|_1
  \longrightarrow0.
\]
Part~(iv) further implies
\[
  |\A(f_n)-\A(f)|\leq2\|f_n-f\|_1\longrightarrow0.
\]
Since $\Phi_{f_n}(\mathbf b_n)=\A(f_n)$, it follows that
\[
  \Phi_f(\mathbf b)
  =\lim_{n\to\infty}\Phi_{f_n}(\mathbf b_n)
  =\lim_{n\to\infty}\A(f_n)=\A(f).
\]
Thus $\mathbf b\in\Cset(f)$, which proves the closed-graph property.

Finally, we prove part~(vi).  Let $O$ be an open neighborhood of
$\Cset(f)$.  If the claim failed at $f$, there would exist nonzero,
nonnegative $f_n\in L^1(\bR^2)$ and $\mathbf b_n\in\bR^2$ such that
\[
  f_n\to f\ \text{in }L^1(\bR^2),
  \qquad
  \mathbf b_n\in\Cset(f_n)\setminus O.
\]
The sequence $(\mathbf b_n)$ must be bounded.  Otherwise, after passing to a
subsequence, $|\mathbf b_n|\to\infty$.  Since
\[
  |\Phi_{f_n}(\mathbf b_n)-\Phi_f(\mathbf b_n)|
  \leq2\|f_n-f\|_1
\]
and $\Phi_f(\mathbf b_n)\to2\kappa$, optimality gives
\begin{align*}
  |\A(f_n)-2\kappa|
  &=|\Phi_{f_n}(\mathbf b_n)-2\kappa|\\
  &\leq2\|f_n-f\|_1
  +|\Phi_f(\mathbf b_n)-2\kappa|
  \longrightarrow0.
\end{align*}
On the other hand, part~(iv) gives
\[
  |\A(f_n)-\A(f)|\leq2\|f_n-f\|_1\longrightarrow0,
\]
and hence $\A(f)=2\kappa$, contradicting \eqref{l2ka}.  Therefore
$(\mathbf b_n)$ is bounded.  After passing to a subsequence, let
$\mathbf b_n\to\mathbf b$.  Since $O^c$ is closed and part~(v) gives the
closed-graph property,
\[
  \mathbf b\in O^c,
  \qquad
  \mathbf b\in\Cset(f)\subset O,
\]
a contradiction.  The map is therefore upper semicontinuous.
\end{proof}

\section{Set-valued first-exit lemma}

The following set-valued continuation lemma is used in the one- and
two-vortex proofs and avoids choosing an optimal center continuously.

\begin{lemma}[Set-valued first-exit lemma]\label{lem:first-exit}
Let $V\subset\bR^m$ be bounded and open, and suppose that
$w\in C(\overline V)$ has a unique minimum point
$\mathbf p_*\in V$ on $\overline V$.  Let
$d:\bR\to[0,\infty)$ be continuous, and let $S$ be a set-valued
map from $\bR$ to $\bR^m$ with nonempty compact values.  Fix
$\delta>0$.  Then there exist $\eta_*,r_*>0$, depending only on $V,w,\delta$,
with the following property.

Assume that $S$ is upper semicontinuous\footnote{As in
\cref{lem:centers}(vi), this means that, for every $t_0\in\bR$ and every open
set $O\subset\bR^m$ with $S(t_0)\subset O$, there exists $\sigma>0$ such that
$S(t)\subset O$ whenever $|t-t_0|<\sigma$.} and that
\[
  \operatorname{diam}S(t)\leq r_*
  \quad\text{whenever }d(t)\leq\delta,
\]
and that
\begin{equation}\label{eq:abstract-inside}
  d(t)+\sup_{\mathbf b\in S(t)}[w(\mathbf b)-w(\mathbf p_*)]
  \leq\eta_*
\end{equation}
whenever $S(t)\subset\overline V$ and $d(t)\leq\delta$.
Then $S(0)\subset V$ and $d(0)<\delta$ imply that
$S(t)\subset V$ and $d(t)<\delta$ for every $t\in\bR$.
\end{lemma}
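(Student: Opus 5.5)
The plan is to choose $\eta_*$ and $r_*$ from a compactness gap, and then run a connectedness argument in time, separately forward and backward from $t=0$.

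First, the constants. Set
\[
  \gamma:=\min_{\mathbf b\in\partial V}[w(\mathbf b)-w(\mathbf p_*)].
\]
This is positive because $\partial V$ is compact (as $V$ is bounded), $w$ is continuous, and $\mathbf p_*\notin\partial V$ is the unique minimum point. Choose
\[
  \eta_*:=\tfrac12\min\{\gamma,\delta\}.
\]
The sublevel set $K:=\{\mathbf b\in\overline V: w(\mathbf b)-w(\mathbf p_*)\le\eta_*\}$ is closed and misses $\partial V$, so $K\Subset V$. Set $r_*:=\tfrac12\operatorname{dist}(K,\bR^m\setminus V)>0$. These depend only on $V,w,\delta$.

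Next, define
\[
  T:=\{t\ge0:\ S(s)\subset V\text{ and }d(s)<\delta\text{ for all }s\in[0,t]\}.
\]
It contains $0$ and is an interval. We show it is open and closed in $[0,\infty)$, which forces $T=[0,\infty)$; the negative-time half is symmetric.

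Openness is immediate. For $t\in T$, the condition $d<\delta$ persists near $t$ by continuity of $d$, and $S(s)\subset V$ persists near $t$ by upper semicontinuity applied with $O=V$.

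Closedness is the main step. Let $t_1=\sup T<\infty$ and suppose $t_1\notin T$.
\begin{itemize}
  \item For $s<t_1$ we have $d(s)<\delta$, so continuity gives $d(t_1)\le\delta$.
  \item $S(t_1)\subset\overline V$. If some $\mathbf b\in S(t_1)$ lay outside $\overline V$, take a small closed ball $B$ around $\mathbf b$ disjoint from $\overline V$. For any $\mathbf c\in S(t_1)$, the estimate $|\mathbf c-\mathbf b|\le r_*$ does not immediately help. Instead apply upper semicontinuity with $O=\bR^m\setminus\{\mathbf b\}$? That fails, since $\mathbf b\in S(t_1)$. The right tool is the closed-graph consequence of upper semicontinuity with compact values: points $\mathbf b\in S(t_1)$ need not be limits of points of $S(s_n)$. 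So the argument has to go the other way.
\end{itemize}
So argue via the estimate at earlier times. For $s<t_1$ the hypotheses hold, so \eqref{eq:abstract-inside} gives $d(s)\le\eta_*$ and $S(s)\subset K$. By upper semicontinuity at $t_1$, $S(s)$ lies near $S(t_1)$ for $s$ near $t_1$, so $S(s)\cap N\ne\varnothing$ for any neighborhood $N$ of $S(t_1)$; in particular
\[
  \operatorname{dist}(K,S(t_1))=0.
\]
Pick $\mathbf c\in S(t_1)$ with $\operatorname{dist}(\mathbf c,K)<r_*$. Since $d(t_1)\le\delta$, we have $\operatorname{diam}S(t_1)\le r_*$. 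Hence every point of $S(t_1)$ lies within $2r_*$ of $K$, and by the choice of $r_*$ this places $S(t_1)$ inside $V$ — provided the strict inequality survives. If it does not, shrink $r_*$ to a third of the distance, which is the safer choice anyway. So $S(t_1)\subset V\subset\overline V$.

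Now $S(t_1)\subset\overline V$ and $d(t_1)\le\delta$, so \eqref{eq:abstract-inside} applies at $t_1$ itself and gives
\[
  d(t_1)\le\eta_*<\delta .
\]
Together with $S(t_1)\subset V$, this shows $t_1\in T$, a contradiction. Combined with openness, $T=[0,\infty)$.

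The main obstacle is exactly this limit passage. Upper semicontinuity only says that $S(s)$ is eventually inside neighborhoods of $S(t_1)$; it does not say that $S(t_1)$ is close to $S(s)$. The diameter bound is what bridges the gap: it turns "some point of $S(t_1)$ is near $K$" into "all of $S(t_1)$ is inside $V$."
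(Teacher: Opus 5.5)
Your proposal is correct and is essentially the paper's argument: a compact sublevel set $K\Subset V$ of $w$, and a first-exit time $t_1$ before which the conditional estimate traps $S(s)$ in $K$. Upper semicontinuity at $t_1$ then shows that $S(t_1)$ meets (or comes arbitrarily close to) $K$; the paper does this via sequences and the closed-graph property, you via neighborhoods directly. The diameter bound pushes all of $S(t_1)$ into $V$, and local persistence then contradicts maximality.

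The hedge about shrinking $r_*$ is unnecessary. From $\operatorname{dist}(\mathbf c,K)<r_*$ and $|\mathbf x-\mathbf c|\le r_*$ you already get $\operatorname{dist}(\mathbf x,K)<2r_*=\operatorname{dist}(K,\bR^m\setminus V)$ strictly; compactness even gives $\mathbf c\in K\cap S(t_1)$. In a write-up you can simply drop the aborted bullet and that caveat.
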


\begin{proof}
Set
\[
  \gamma:=\min_{\partial V}[w-w(\mathbf p_*)]>0
\]
and choose
\[0<\eta_*<\min\left\{\frac\delta2,\frac\gamma4\right\}.\]  Then
\[
 K_*:=\left\{\mathbf x\in\overline V:
  w(\mathbf x)-w(\mathbf p_*)\leq2\eta_*\right\}\Subset V.
\]
Set
\[
  d_*:=\operatorname{dist}(K_*,\bR^m\setminus V)>0
\]
and choose
\[
  0<r_*<d_*/2.
\]

We shall use the following local persistence observation.  If, at some
$t_0\in\bR$,
\[
  S(t_0)\subset V,
  \qquad d(t_0)<\delta,
\]
then compactness gives
\[
  \rho_{t_0}:=\operatorname{dist}
  (S(t_0),\bR^m\setminus V)>0,
  \qquad
  \epsilon_{t_0}:=\delta-d(t_0)>0.
\]
By upper semicontinuity of $S$ and continuity of $d$, there exists
$\sigma_{t_0}>0$ such that, whenever $|t-t_0|<\sigma_{t_0}$,
\[
  S(t)\subset
  \{\mathbf x:\operatorname{dist}(\mathbf x,S(t_0))<\rho_{t_0}/2\}
  \subset V,
  \qquad
  |d(t)-d(t_0)|<\epsilon_{t_0}/2.
\]
In particular, $S(t)\subset V$ and $d(t)<\delta$ throughout this
neighborhood of $t_0$.

We first consider nonnegative times and define
\[
  T_*:=\sup\left\{T>0:
  S(t)\subset V\ \text{and}\ d(t)<\delta
  \ \text{for every }t\in[0,T)\right\}.
\]
The local persistence observation at $t_0=0$ shows that $T_*>0$.

 Suppose, for contradiction, that $T_*<\infty$.
For every $t<T_*$, according to \eqref{eq:abstract-inside}, we have
\[
  d(t)\leq\eta_*<\delta/2,
  \qquad
  S(t)\subset\{w-w(\mathbf p_*)\leq\eta_*\}\subset K_*.
\]
Choose $t_n\uparrow T_*$ and $\mathbf b_n\in S(t_n)$.  By the compactness
of $K_*$, after passing to a subsequence, we have
$\mathbf b_n\to\overline{\mathbf b}\in K_*$.  Recall that an upper semicontinuous, compact-valued map has a closed graph
\cite[Proposition~1.4.8]{AubinFrankowska2009}; hence
$\overline{\mathbf b}\in S(T_*)$.  Continuity of $d$ gives
\[
  d(T_*)=\lim_{n\to\infty}d(t_n)\leq\eta_*<\delta.
\]
In view of this inequality, the diameter hypothesis applies at $T_*$, and
every $\mathbf c\in S(T_*)$ satisfies
\[
  |\mathbf c-\overline{\mathbf b}|
  \leq\operatorname{diam}S(T_*)\leq r_*<d_*/2.
\]
Since $\overline{\mathbf b}\in K_*$, it follows more precisely that
\[
  \operatorname{dist}(\mathbf c,\bR^m\setminus V)
  \geq\operatorname{dist}(\overline{\mathbf b},\bR^m\setminus V)
  -|\mathbf c-\overline{\mathbf b}|
  >d_*/2.
\]
Thus $S(T_*)\subset V$.  Since $d(T_*)<\delta$, the local persistence
observation at $t_0=T_*$ extends both controlled inequalities beyond $T_*$,
contradicting its definition.  Thus $T_*=\infty$.

Applying the same
argument to $d(-t)$ and $S(-t)$, we obtain the conclusion for negative times.
\end{proof}

\section{An explicit domain with a trapped opposite-sign pair}

In this appendix, we construct an explicit noncircular perturbation of the
disk such that the Kirchhoff--Routh function associated with a pair of equal
and opposite vortices admits a strict nondegenerate local minimum.  Throughout,
we identify $\bR^2$ with $\mathbb C$.

\begin{proposition}[A trapped opposite-sign pair in an explicit domain]
\label{prop:two-example}
Let $B:=\{z\in\mathbb C:|z|<1\}$ be the unit disk centered at the origin
and, for $\tau>0$, set
\[
  F_\tau(z):=z+\tau z^3,
  \qquad D_\tau:=F_\tau(B).
\]
Then
\begin{itemize}
\item[(i)] For every $\tau\in(0,1/3)$, the domain $D_\tau$ is a bounded,
simply connected, noncircular domain with analytic boundary.
\item[(ii)] There exist $\tau_0\in(0,1/3)$ and a family
$\{r_\tau\}_{0<\tau<\tau_0}\subset(0,1)$ such that
\[
  r_\tau\longrightarrow r_0:=\sqrt{\sqrt5-2}
  \quad\text{as }\tau\to0,
\]
and, for every $\tau\in(0,\tau_0)$ and every $\mu>0$, the function
$W_{\boldsymbol\kappa}^{(2)}$ corresponding to
$\kappa_1=\mu=-\kappa_2$ has a strict nondegenerate local minimum point at
\[
  \bigl(F_\tau(r_\tau),F_\tau(-r_\tau)\bigr).
\]
\end{itemize}
\end{proposition}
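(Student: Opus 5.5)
Part~(i) is elementary, and the substance of part~(ii) is a perturbation argument that uses conformal invariance to move everything back to the unit disk $B$, where the two-vortex problem is explicit.

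\emph{Part (i).} We have $F_\tau'(z)=1+3\tau z^2\neq0$ on $\overline B$ when $\tau<1/3$. Moreover
\[
F_\tau(z)-F_\tau(w)=(z-w)\bigl(1+\tau(z^2+zw+w^2)\bigr),
\qquad |z^2+zw+w^2|\leq3 \text{ on } \overline B,
\]
so $F_\tau$ is injective on $\overline B$. Hence $D_\tau$ is bounded and simply connected, and $\partial D_\tau=F_\tau(\partial B)$ is analytic. If $D_\tau$ were a disk, then $F_\tau$ would be a conformal map of $B$ onto a disk. Such a map is Möbius, whereas $F_\tau$ is a cubic polynomial, so $D_\tau$ is noncircular.

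\emph{Reduction to the disk.} Conformal invariance gives $G_{D_\tau}(F_\tau(z),F_\tau(w))=G_B(z,w)$ and
\[
H_{D_\tau}(F_\tau(z))=H_B(z)-\frac1{2\pi}\ln|F_\tau'(z)|,
\qquad
H_B(z)=-\frac1{4\pi}\ln(1-|z|^2).
\]
With $\kappa_1=\mu=-\kappa_2$, the pulled-back Kirchhoff--Routh function is $\mu^2\widetilde W_\tau$, where
\[
\widetilde W_\tau(z_1,z_2)=H_B(z_1)+H_B(z_2)+G_B(z_1,z_2)-\frac1{2\pi}\sum_{i=1}^2\ln|1+3\tau z_i^2|.
\]
Strict nondegenerate local minima are preserved under the diffeomorphism $F_\tau\times F_\tau$, so it suffices to study $\widetilde W_\tau$ near $(r_0,-r_0)$.

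\emph{Locating the critical point.} Since $F_\tau$ is odd and has real coefficients, $\widetilde W_\tau$ is invariant under the involutions $(z_1,z_2)\mapsto(\bar z_1,\bar z_2)$ and $(z_1,z_2)\mapsto(-z_2,-z_1)$. Their common fixed set is $\{(r,-r):r\in\bR\}$. By symmetric criticality, a critical point of the one-variable function $f_\tau(r):=\widetilde W_\tau(r,-r)$ is a critical point of $\widetilde W_\tau$. For $\tau=0$,
\[
2\pi f_0(r)=-\ln(1-r^2)+\ln\frac{1+r^2}{2r},
\]
and $f_0'(r)=0$ reduces to $r^4+4r^2-1=0$, that is, $r=r_0=\sqrt{\sqrt5-2}$. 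After checking $f_0''(r_0)>0$, the implicit function theorem yields $r_\tau\to r_0$ with $f_\tau'(r_\tau)=0$ for small $\tau$.

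\emph{Nondegeneracy (main obstacle).} At $\tau=0$, rotation invariance makes the orbit $\{(r_0e^{i\varphi},-r_0e^{i\varphi})\}$ a circle of critical points. Hence the Hessian $\mathcal H_0$ at $(r_0,-r_0)$ is only positive semidefinite, with kernel spanned by the rotation generator $v=(ir_0,-ir_0)$. I would first verify by explicit computation that $\mathcal H_0$ is positive definite on $v^\perp$, which is three-dimensional. I expect this computation to be the most laborious step. I would organize it using the symmetry: the even/odd splitting under $(z_1,z_2)\mapsto(-z_2,-z_1)$ and under conjugation block-diagonalizes $\mathcal H_0$ into explicit one-variable second derivatives of the disk formulas.

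For the rotational direction, the leading effect of the perturbation is $-\frac{3\tau}{2\pi}\operatorname{Re}(z_1^2+z_2^2)+O(\tau^2)$. On the orbit this equals $-\frac{3\tau}{\pi}r_0^2\cos2\varphi$, which has a strict minimum at $\varphi=0$. Consequently $v^{T}\mathcal H_\tau v=c\tau+O(\tau^2)$ with $c>0$. The shift $r_\tau-r_0=O(\tau)$ contributes only a term $O(\tau)\,v^{T}D^3\widetilde W_0\,v$, and that term vanishes because it is a derivative along the degenerate orbit.

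In the splitting $\mathrm{span}\{v\}\oplus v^\perp$, the mixed entries of $\mathcal H_\tau$ are $O(\tau)$ and the $v^\perp$ block is positive definite uniformly in $\tau$. The Schur complement of that block is therefore $c\tau-O(\tau^2)>0$ for $\tau<\tau_0$, so $\mathcal H_\tau$ is positive definite. A nondegenerate critical point with positive definite Hessian is a strict local minimum, and this holds for every $\mu>0$ since $\mu^2$ is only a factor.
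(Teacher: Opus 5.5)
Your overall architecture matches the paper's: conformal pullback to $B$, the two involutions $(z_1,z_2)\mapsto(\bar z_1,\bar z_2)$ and $(z_1,z_2)\mapsto(-z_2,-z_1)$, the implicit function theorem for $r_\tau$, and a separate treatment of the rotational direction. Your M\"obius argument for noncircularity is a valid alternative. The gap is in the step you call the main obstacle: the claim that the shift term vanishes is false. Set $e_r=(1,0,-1,0)$ and $e_\theta=(0,1,0,-1)$. The path $\varphi\mapsto(re^{i\varphi},-re^{i\varphi})$ has velocity $re_\theta$ and acceleration $-re_r$ at $\varphi=0$. Differentiating $\widetilde W_0(re^{i\varphi},-re^{i\varphi})=f_0(r)$ twice in $\varphi$ therefore gives
\[
  D^2\widetilde W_0(r,-r)[e_\theta,e_\theta]=\frac{f_0'(r)}{r},
  \qquad\text{hence}\qquad
  D^3\widetilde W_0(r_0,-r_0)[e_r,e_\theta,e_\theta]=\frac{f_0''(r_0)}{r_0}>0,
\]
whichever slot you regard as the direction of differentiation. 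Since $r_\tau-r_0$ is of exact order $\tau$, the disk part contributes an $O(\tau)$ term to the rotational entry, of the same size as the term you computed.

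There is also a second, compensating slip. Let $P_\tau:=\widetilde W_\tau-\widetilde W_0$. The second $\varphi$-derivative of $P_\tau$ along the orbit is not its Hessian quadratic form at $(r_0,-r_0)$, because that point is not critical for $P_\tau$. The two differ by the curvature term $DP_\tau(r_0,-r_0)[-r_0e_r]$. By $f_\tau'(r_\tau)=0$, these two discrepancies cancel at order $\tau$, so your constant $c$ is in fact correct. As written, however, the sign of $v^T\mathcal H_\tau v$ is not established, because you discarded an $O(\tau)$ contribution whose sign you never determined.

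The clean repair is the paper's: work directly at $p_\tau=(r_\tau,-r_\tau)$. Since $D\widetilde W_\tau(p_\tau)=0$, the acceleration term drops out. Then $r_\tau^2D^2\widetilde W_\tau(p_\tau)[e_\theta,e_\theta]$ equals the second $\varphi$-derivative of $\widetilde W_\tau$ along the rotation orbit through $p_\tau$. The disk part is exactly constant on that orbit, so this quantity is exact and positive: after multiplying by $4\pi$ it equals $\frac{8q_\tau}{(1+q_\tau)^2}$ with $q_\tau=3\tau r_\tau^2$. No expansion in $\tau$ is needed.

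Moreover, both involutions fix $p_\tau$ for every $\tau$, and their joint eigenspaces are the lines spanned by $e_r$, $e_c=(1,0,1,0)$, $e_v=(0,1,0,1)$, and $e_\theta$. So $\mathcal H_\tau$ is exactly diagonal in this basis. The mixed entries you bound by $O(\tau)$ vanish identically, and no Schur complement is needed. The other three diagonal entries are positive at $\tau=0$ and hence for small $\tau$. That positivity is the computation you defer; the paper records the explicit values.

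Finally, since $H=\frac12h(\mathbf x,\mathbf x)$, the transformation rule is $H_{D_\tau}(F_\tau(z))=H_B(z)-\frac1{4\pi}\ln|F_\tau'(z)|$, not $\frac1{2\pi}$. This doubles your perturbation but changes no sign.
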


\begin{proof}
We divide the proof into four steps.

\medskip\noindent\textbf{Step 1: Geometry of \(D_\tau\).}
Suppose that \(0<\tau<1/3\). For \(z,w\in\overline B\),
\[
F_\tau(z)-F_\tau(w)
=(z-w)\bigl(1+\tau(z^2+zw+w^2)\bigr).
\]
Since
\[
\left|1+\tau(z^2+zw+w^2)\right|
\geq
1-\tau\bigl(|z|^2+|z||w|+|w|^2\bigr)
\geq 1-3\tau>0,
\]
we see that \(F_\tau\) is injective on \(\overline B\). Moreover, for
\(z\in\overline B\),
\[
|F_\tau'(z)|
=
|1+3\tau z^2|
\geq 1-3\tau|z|^2
\geq 1-3\tau>0.
\]
Thus \(F_\tau\) is a conformal bijection from \(B\) onto \(D_\tau\).
Since \(F_\tau\) is continuous and injective on the compact set
\(\overline B\), it is a homeomorphism from \(\overline B\) onto
\(F_\tau(\overline B)\). Since \(B\) is dense in \(\overline B\),
\(D_\tau=F_\tau(B)\) is dense in \(F_\tau(\overline B)\). The latter
set is compact, hence closed, and therefore
\[
\overline{D_\tau}=F_\tau(\overline B).
\]
Moreover, since \(D_\tau\) is open and \(F_\tau\) is injective on
\(\overline B\),
\[
\partial D_\tau
=\overline{D_\tau}\setminus D_\tau
=F_\tau(\overline B)\setminus F_\tau(B)
=F_\tau(\partial B).
\]
The restriction \(F_\tau|_{\partial B}\) is an injective real-analytic
immersion, and hence \(\partial D_\tau\) is an analytic Jordan curve.
Consequently, \(D_\tau\) is bounded and simply connected with analytic
boundary.

It remains to show that \(D_\tau\) is not a disk. Since \(F_\tau\) is
odd, we have
\[
D_\tau=-D_\tau.
\]
If \(D_\tau=B_R(c)\) for some \(c\in\mathbb C\), then
\[
B_R(c)=D_\tau=-D_\tau=B_R(-c),
\]
which implies \(c=0\). On the other hand,
\[
|F_\tau(e^{i\theta})|^2
=
1+\tau^2+2\tau\cos(2\theta),
\]
which is not constant in \(\theta\). Thus \(D_\tau\) is noncircular.

\medskip\noindent\textbf{Step 2: Conformal pullback.}
Let \(G_{D_\tau}\) and \(H_{D_\tau}\) denote the Dirichlet Green
function and the Robin function of \(D_\tau\), respectively.
Recall that the Dirichlet Green function and the Robin function of the
unit disk \(B\) are given by
\[
G_B(z,w)
=
\frac{1}{2\pi}
\ln\left|
\frac{1-\overline{w}z}{z-w}
\right|,
\qquad z,w\in B,\quad z\neq w,
\]
and
\[
H_B(z)
=
-\frac{1}{4\pi}\ln(1-|z|^2).
\]
By conformal invariance of the Dirichlet Green function and the
corresponding transformation rule for the Robin function
(see, for example, \cite[Section~4.4]{Ransford1995}), we have
\[
G_{D_\tau}\bigl(F_\tau(z),F_\tau(w)\bigr)
=
G_B(z,w)
=
\frac{1}{2\pi}
\ln\left|
\frac{1-\overline{w}z}{z-w}
\right|
\]
and
\[
H_{D_\tau}\bigl(F_\tau(z)\bigr)
=
H_B(z)-\frac{1}{4\pi}\ln|F_\tau'(z)|
=
-\frac{1}{4\pi}
\ln\Bigl[(1-|z|^2)|1+3\tau z^2|\Bigr].
\]
For \(\kappa_1=\mu=-\kappa_2\), the Kirchhoff--Routh function takes the
form
\[
W_{\boldsymbol\kappa}^{(2)}(a_1,a_2)
=
\mu^2 H_{D_\tau}(a_1)
+\mu^2 H_{D_\tau}(a_2)
+\mu^2 G_{D_\tau}(a_1,a_2).
\]
Define its normalized pullback by
\[
\Phi_\tau(z_1,z_2)
:=
\frac{4\pi}{\mu^2}
W_{\boldsymbol\kappa}^{(2)}
\bigl(F_\tau(z_1),F_\tau(z_2)\bigr).
\]
Using the preceding formulas and
$F_\tau'(z)=1+3\tau z^2$,
we obtain
\begin{equation}\label{eq:Phi-tau}
\Phi_\tau(z_1,z_2)
=
-\sum_{i=1}^2\ln(1-|z_i|^2)
+2\ln\left|
\frac{1-\overline{z_2}z_1}{z_1-z_2}
\right|
-\sum_{i=1}^2\ln|1+3\tau z_i^2|.
\end{equation}
The last term in \eqref{eq:Phi-tau} is precisely the contribution
of the conformal perturbation.

\medskip\noindent\textbf{Step 3: Critical point in the disk.}
We first consider the disk case \(\tau=0\). Restricting
\(\Phi_0\) to the symmetric real configurations
\[
(z_1,z_2)=(r,-r),
\]
we obtain
\[
\phi_0(r)
:=
\Phi_0(r,-r)
=
-2\ln(1-r^2)
+
2\ln\frac{1+r^2}{2r}.
\]
Its derivative is
\[
\phi_0'(r)
=
\frac{4r}{1-r^2}
+
\frac{4r}{1+r^2}
-
\frac{2}{r}.
\]
Hence
\[
\phi_0'(r)=0
\quad\Longleftrightarrow\quad
r^4+4r^2-1=0.
\]
Writing \(u=r^2\), the last equation becomes
\[
u^2+4u-1=0.
\]
The unique solution of this equation in \((0,1)\) is
\[
u_0=\sqrt5-2.
\]
Consequently, the unique \(r\in(0,1)\) satisfying
\(\phi_0'(r)=0\) is
\[
r_0=\sqrt{u_0}=\sqrt{\sqrt5-2}.
\]
Moreover,
\[
\phi_0''(r_0)
=
\frac{4(1+r_0^2)}{(1-r_0^2)^2}
+
\frac{4(1-r_0^2)}{(1+r_0^2)^2}
+
\frac{2}{r_0^2}
>0.
\]
We next verify that \((r_0,-r_0)\) is a critical point of the full
four-dimensional function \(\Phi_0\). The symmetries
\[
(z_1,z_2)\mapsto
(\overline{z_1},\overline{z_2}),
\qquad
(z_1,z_2)\mapsto(-z_2,-z_1)
\]
imply that
\[
\nabla\Phi_0(r_0,-r_0)
\in\operatorname{span}\{e_r\},
\qquad
e_r:=(1,0,-1,0).
\]
Since
\[
\nabla\Phi_0(r_0,-r_0)\cdot e_r
=
\phi_0'(r_0)=0,
\]
we obtain
\[
\nabla\Phi_0(r_0,-r_0)=0.
\]
To examine the Hessian, in the real coordinates
\((x_1,y_1,x_2,y_2)\), introduce the four mutually orthogonal
directions
\[
e_r=(1,0,-1,0),\qquad
e_c=(1,0,1,0),
\]
\[
e_v=(0,1,0,1),\qquad
e_\theta=(0,1,0,-1).
\]
The same two symmetries show that these four one-dimensional subspaces
are invariant under the Hessian at \((r_0,-r_0)\), so all mixed
entries between them vanish. The corresponding affine paths are
\[
(r+s,-r-s),\qquad
(r+s,-r+s),\qquad
(r+is,-r+is),\qquad
(r+is,-r-is).
\]
Substituting these paths into \eqref{eq:Phi-tau}, differentiating twice
at \(s=0\), and using
\[
r_0^4+4r_0^2-1=0,
\]
we obtain
\[
D^2\Phi_0(r_0,-r_0)[e_r,e_r]
=
\phi_0''(r_0)>0,
\]
\[
D^2\Phi_0(r_0,-r_0)[e_c,e_c]
=
4\left[
\frac{1+r_0^2}{(1-r_0^2)^2}
-\frac{1}{1+r_0^2}
\right]>0,
\]
\[
D^2\Phi_0(r_0,-r_0)[e_v,e_v]
=
4\left[
\frac{1}{1-r_0^2}
-\frac{1-r_0^2}{(1+r_0^2)^2}
\right]>0,
\]
whereas
\[
D^2\Phi_0(r_0,-r_0)[e_\theta,e_\theta]=0.
\]
Thus the rotational direction is the only degenerate direction.
Since $\Phi_0$ is invariant under simultaneous rotations, the
rotational orbit through $(r_0,-r_0)$ consists of critical points.
At $(r_0,-r_0)$, the tangent space to this orbit is
$\operatorname{span}\{e_\theta\}$, which coincides with the kernel of
$D^2\Phi_0(r_0,-r_0)$, while the Hessian is positive definite on its
orthogonal complement. By rotational invariance, the same properties
hold along the entire orbit.

\medskip\noindent\textbf{Step 4: Breaking rotational degeneracy.}
We now study the critical point for small \(\tau>0\). On the
symmetric real configurations,
\[
\phi_\tau(r)
:=
\Phi_\tau(r,-r)
=
\phi_0(r)-2\ln(1+3\tau r^2),
\]
and hence
\[
\partial_r\phi_\tau(r)
=
\phi_0'(r)
-
\frac{12\tau r}{1+3\tau r^2}.
\]
Set
\[
g(\tau,r):=\partial_r\phi_\tau(r).
\]
Then
\[
g(0,r_0)=0,
\qquad
\partial_rg(0,r_0)=\phi_0''(r_0)>0.
\]
Hence the implicit function theorem yields \(\tau_1>0\) and a smooth branch
\(r_\tau\in(0,1)\), defined for \(|\tau|<\tau_1\), such that
\[
\partial_r\phi_\tau(r_\tau)=0,
\qquad
r_\tau\longrightarrow r_0
\quad\text{as }\tau\to0.
\]

The two symmetries used above persist for \(\tau\neq0\). Thus
\[
\nabla\Phi_\tau(r_\tau,-r_\tau)
\in\operatorname{span}\{e_r\},
\]
while
\[
\nabla\Phi_\tau(r_\tau,-r_\tau)\cdot e_r
=
\phi_\tau'(r_\tau)=0.
\]
Hence
\[
\nabla\Phi_\tau(r_\tau,-r_\tau)=0.
\]
Moreover, the four directions \(e_r,e_c,e_v,e_\theta\) still
diagonalize the Hessian. By continuity, for all sufficiently small
\(\tau>0\),
\[
D^2\Phi_\tau(r_\tau,-r_\tau)[e,e]>0,
\qquad
e\in\{e_r,e_c,e_v\}.
\]
It remains to analyze the rotational direction. Set
\[
q_\tau:=3\tau r_\tau^2.
\]
Along the rotational path
\[
(z_1,z_2)
=
(r_\tau e^{i\theta},-r_\tau e^{i\theta}),
\]
the disk part of \(\Phi_\tau\) is rotation invariant. The conformal
perturbation contributes
\[
-2\ln|1+q_\tau e^{2i\theta}|
=
-\ln\bigl(1+2q_\tau\cos(2\theta)+q_\tau^2\bigr).
\]
Consequently,
\[
\left.
\frac{d^2}{d\theta^2}
\Phi_\tau
(r_\tau e^{i\theta},-r_\tau e^{i\theta})
\right|_{\theta=0}
=
\frac{8q_\tau}{(1+q_\tau)^2}>0.
\]
The tangent vector of this path at \(\theta=0\) is
\(r_\tau e_\theta\). Since
\[
D\Phi_\tau(r_\tau,-r_\tau)=0,
\]
the second-derivative chain rule gives
\[
\left.
\frac{d^2}{d\theta^2}
\Phi_\tau
(r_\tau e^{i\theta},-r_\tau e^{i\theta})
\right|_{\theta=0}
=
r_\tau^2
D^2\Phi_\tau(r_\tau,-r_\tau)[e_\theta,e_\theta].
\]
Therefore,
\[
D^2\Phi_\tau(r_\tau,-r_\tau)[e_\theta,e_\theta]>0.
\]
Together with the positivity of the other three directions, this shows
that
\[
D^2\Phi_\tau(r_\tau,-r_\tau)
\]
is positive definite for all sufficiently small \(\tau>0\).

Define
\[
\mathcal F_\tau(z_1,z_2)
:=
\bigl(F_\tau(z_1),F_\tau(z_2)\bigr),
\qquad
z_\tau:=(r_\tau,-r_\tau),
\qquad
a_\tau:=\mathcal F_\tau(z_\tau).
\]
Since
\[
\Phi_\tau
=
\frac{4\pi}{\mu^2}
W_{\boldsymbol\kappa}^{(2)}\circ\mathcal F_\tau
\]
and \(D\mathcal F_\tau(z_\tau)\) is invertible, the criticality of
\(z_\tau\) implies
\[
DW_{\boldsymbol\kappa}^{(2)}(a_\tau)=0.
\]
At this critical point, the term involving \(D^2\mathcal F_\tau\) in the
second-derivative chain rule vanishes because
\(DW_{\boldsymbol\kappa}^{(2)}(a_\tau)=0\). Therefore,
\[
D^2\Phi_\tau(z_\tau)
=
\frac{4\pi}{\mu^2}
[D\mathcal F_\tau(z_\tau)]^T
D^2W_{\boldsymbol\kappa}^{(2)}(a_\tau)
D\mathcal F_\tau(z_\tau).
\]
Since \(D^2\Phi_\tau(z_\tau)\) is positive definite and
\(D\mathcal F_\tau(z_\tau)\) is invertible, it follows that
\(D^2W_{\boldsymbol\kappa}^{(2)}(a_\tau)\) is positive definite. Therefore,
\[
\bigl(F_\tau(r_\tau),F_\tau(-r_\tau)\bigr)
\]
is a strict nondegenerate local minimum point of
\(W_{\boldsymbol\kappa}^{(2)}\).

Finally, since the normalized pullback \(\Phi_\tau\) is independent of
\(\mu\), we may choose \(\tau_0\in(0,1/3)\) sufficiently small such that
the conclusion holds for every \(\mu>0\).
\end{proof}

\section*{Statements and Declarations}

\noindent\textbf{Funding.}
D. Cao was supported by National Key R\&D Program (Grant 2023YFA1010001) and NNSF of China (Grant
12371212).  G. Wang was supported by NNSF of China (Grant 12471101).

\smallskip
\noindent\textbf{Conflicts of interest.}
The authors declare that they have no conflicts of interest related to this work.

\smallskip
\noindent\textbf{Data availability.}
Data sharing is not applicable to this article, as no datasets were generated or analyzed during the current study.

\end{document}